\documentclass[reqno,11pt]{amsart}
\usepackage{amssymb, latexsym}
\usepackage{hyperref}
\usepackage{geometry}
\geometry{left=2.6cm,right=2.6cm,top=2.8cm, bottom=2.3cm}
\usepackage{setspace}
\newcommand{\V}{\vskip0.2cm}
\allowdisplaybreaks

%

\newcommand{\C}{{\mathbb{C}}}
\newcommand{\R}{{\mathbb{R}}}

%
\let\Re=\undefined\DeclareMathOperator*{\Re}{Re}
\let\Im=\undefined\DeclareMathOperator*{\Im}{Im}

\newcommand{\ga}{\gamma/2-1 }
\newcommand{\fga}{\frac {4}{4-\gamma}}
\newcommand{\ega}{\frac {8}{8-\gamma}}
\newcommand{\fg}{\frac {4}{\gamma}}
\newcommand{\eg}{\frac {8}{\gamma}}
\newcommand{\tga}{\frac{2}{\gamma-2}}
\newcommand{\fgat}{\frac{4}{\gamma-2}}
\newcommand{\jrf}{ J\times\R^4}
\newcommand{\conv}{(|\cdot|^{-\gamma}*|u|^2)}
\newcommand{\Iconv}{(|\cdot|^{-\gamma}*|Iu|^2)}
\newcommand{\convlm}{(|\cdot|^{-\gamma}*|u_{\leq \frac M8}|^2)}
\newcommand{\convgm}{(|\cdot|^{-\gamma}*|u_{> \frac M8}|^2)}
\newcommand{\convln}{(|\cdot|^{-\gamma}*|u_{\leq \frac N8}|^2)}
\newcommand{\convgn}{(|\cdot|^{-\gamma}*|u_{> \frac N8}|^2)}
\newcommand{\Iconvln}{(|\cdot|^{-\gamma}*|Iu_{\leq \frac N8}|^2)}
\newcommand{\Iconvgn}{(|\cdot|^{-\gamma}*|Iu_{> \frac N8}|^2)}
\newcommand{\Imix}{|\cdot|^{-\gamma}*(\Re Iu_{\leq \frac N8}\overline{Iu_{>\frac N8}})}
\newcommand{\mix}{|\cdot|^{-\gamma}*(\Re u_{\leq \frac N8}\overline{u_{>\frac N8}})}
\newcommand{\efg}{\frac 8{5-\gamma}}
\newcommand{\efga}{\frac 8{4-\gamma}}
\newcommand{\me}{\frac M8}
\newcommand{\nei}{\frac N8}
\newcommand{\e}{{\epsilon}}
\newcommand{\ppgtr}{{e^{it\Delta}}}

\newcommand{\p}{\partial}
\newcommand{\D}{\Delta}
\newcommand{\ld}{\lambda}
\newcommand{\Ld}{\Lambda}
\newcommand{\n}{\nabla}
\newcommand{\wh}{\widehat}
\newcommand{\al}{\alpha}

\theoremstyle{plain}
\newtheorem{theoreme}{Theorem}
\newtheorem{theorem}{Theorem}
\newtheorem{proposition}[theorem]{Proposition}
\newtheorem{lemma}[theorem]{Lemma}
\newtheorem{corollary}[theorem]{Corollary}
\theoremstyle{definition}
\newtheorem{definition}[theorem]{Definition}
\newtheorem{remark}[theorem]{Remark}

%
%
\newcounter{smalllist}

%
%

\numberwithin{equation}{section}
\numberwithin{theorem}{section}


\begin{document}
   \onehalfspacing

\title[4D Hartree]{Global well-posedness for the defocusing Hartree equation with radial data in $\mathbb R^4$}

\author[Miao]{Changxing Miao}
\address{\hskip-1.15em Changxing Miao:
\hfill\newline Institute of Applied Physics and Computational
Mathematics,  P. O. Box 8009,\ Beijing,\ China,\ 100088,}
\email{miao\_changxing@iapcm.ac.cn}

\author[Xu]{Guixiang Xu}
\address{\hskip-1.15em Guixiang Xu \hfill\newline Institute of
Applied Physics and Computational Mathematics, P. O. Box 8009,\
Beijing,\ China,\ 100088, } \email{xu\_guixiang@iapcm.ac.cn}

\author[Yang]{Jianwei Yang}
\address{\hskip-1.15em Jianwei Yang \hfill\newline
Department of Mathematics, Beijing Institute of Technology, Beijing 100081,\ P. R.  China}
\email{geewey\_young@pku.edu.cn}

\subjclass[2000]{Primary: 35Q40; Secondary: 35Q55}

\keywords{Hartree equation; Global well-posedness; $I$-method; Local
smoothing effect; Longtime Strichartz estimate; Scattering}

\begin{abstract}
By $I$-method, the interaction Morawetz estimate,
long time Strichartz estimate and local smoothing effect of Schr\"{o}dinger operator, we show global well-posedness and scattering for the defocusing Hartree equation
	\begin{equation*}
	\left\{ \aligned
	iu_t +  \Delta u  & =F(u), (t,x)\in\mathbb{R}\times\mathbb{R}^4\\
	u(0) & =u_0(x)\in   H^s(\mathbb{R}^4),
	\endaligned
	\right.
	\end{equation*}
	where $F(u)=  \big( V* |u|^2 \big) u$, and $V(x)=|x|^{-\gamma}$, $3< \gamma<4$,
	with radial data in
	$H^{s}\left(\R^4\right)$ for $s>s_c:=\gamma/2-1$. It is a sharp global result except of the critical case
	$s=s_c$, which is a very difficult open problem.
\end{abstract}

\maketitle


\section{Introduction}

In this paper, we consider the Hartree equation
\begin{equation} \label{eq:nlh}
\left\{ \aligned
iu_t +  \Delta u  & =F(u), (t,x)\in\mathbb{R}\times\mathbb{R}^n\\
u(0) & =u_0(x)\in   H^s(\mathbb{R}^n),
\endaligned
\right.
\end{equation}
where $F(u)= \iota \big( V* |u|^2 \big) u$,
with $V(x)=|x|^{-\gamma}$, $2<\gamma<n$, $\iota=\pm 1$ and $H^s$ denotes the usual inhomogeneous Sobolev
space of order $s$. The Hartree equation
arises in the study of Boson stars and other physical phenomena, and in chemistry, it appears as a continuous-limit model for mesoscopic molecular structures, see for example \cite{MiXZ:CPDE} and references therein.

Define scaling transformation
\begin{equation}\label{scaling transf}
  u^\ld(t,x)=\ld^{(n+2-\gamma)/2}u(\ld^2 t,\ld x).
\end{equation}
Clearly, it
leaves the equation \eqref{eq:nlh} invariant, and the $H^s-$norm of initial data behaves as
\begin{equation}\label{eq:norm changes under scaling}
  \|u^\ld_0\|_{\dot H^s(\R^n)}=\ld^{s-\gamma/2+1}\|u_0\|_{\dot H^s(\R^n)}.
\end{equation}
Hence, $\dot H^{\gamma/2-1}$ is invariant under the scaling transformation and it is called as the critical Sobolev space.

 Local well-posedness for
\eqref{eq:nlh} in $H^s$ for any $s>\gamma/2-1$ was established in
\cite{MiXZ:LWP} where the maximal time interval of existence depends
on the $H^s$ norm of initial data. A local solution also exists for
$\dot H^{\gamma/2-1}$ initial data, however the time of existence
depends not only on the $\dot H^{\gamma/2-1}$ norm of $u_0$, but
also on the profile of $u_0$. Ill-posedness in some specific sense
for \eqref{eq:nlh} in $H^s$ for any $s<\max{(0,\gamma/2-1)}$ was
also established. For more details on local well-posedness, see
\cite{MiXZ:LWP}.

It is well known that
$H^1$ solutions of \eqref{eq:nlh} conserve the mass and energy
\begin{equation*}
\aligned \big\|u(t,\cdot)\big\|_{L^2\left(
\mathbb{R}^n\right)}=\big\|u_0(\cdot)\big\|_{L^2\left(
\mathbb{R}^n\right)},
\endaligned
\end{equation*}
\begin{align*}
 E(u)(t):=\frac12   \big\|\nabla
u(t)\big\|^2_{L^2\big( \mathbb{R}^n\big)}+ \frac{\iota}{4}
\iint_{\mathbb{R}^n\times \mathbb{R}^n} \frac{|u(t,x)|^2
|u(t,y)|^2}{|x-y|^{\gamma}}
\ dxdy=E(u)(0).
\end{align*}
 We refer to \eqref{eq:nlh} as the defocusing
case when $\iota = 1$, and as the focusing case when $\iota=-1$.
The local well-posedness
along with the above two conservation laws immediately yields global well-posedness for
\eqref{eq:nlh} in $H^1$ with $\iota=+1, $ where $ 0<\gamma<4$ if $n=4$ and $0<\gamma\leq4$ if $n\geq 5$.

As for the long time dynamics of \eqref{eq:nlh}, there are many
results so far. The global well-posedness and scattering of the
defocusing $\dot H^1$-subcritical Hartree equation $(\iota=1,
2<\gamma<\min(4,n))$ in the energy space were firstly solved in
\cite{GiV00a, Na:Hartree:MRL} based on the classical Morawetz
estimate. With the development of the induction on energy strategy
in \cite{B99a, B99b, CKSTT07:AM, Tao05} and the
concentration-compactness argument in \cite{Kenigmerle:H1 critical
	NLS, Kenig-merle:wave, KTV:2D NLS, KVZ:3D NLS}, the defocusing $\dot
H^1$-critical Hartree equation $(\iota=1, 4=\gamma<n)$ has been
completely settled in \cite{ MiXZ:JFA, MiXZ:CPDE} and the long time
dynamics for the focusing $\dot H^1$-critical Hartree equation
$(\iota=-1, 4=\gamma<n)$ under the condition that the energy is less
than that of the ground state have also been characterized in
\cite{LiMZ08, MiXZ:CollM, MiWX:FM}. Of course, the global
well-posedness and scattering for the $L^2$-critical Hartree
equation $(\gamma=2<n)$ with {\em radial} data was similarly solved in
\cite{MiXZ:JMPA}, however, the \emph{non-radial} case is still open.

For other works on the global
well-posedness and scattering for the Hartree equation, see
\cite{ChHKY:CPDE, GiO93, GiV00b, GiV00c, GiV01, HaT87, MiXZ:AIHP,NaO:CMP}.

The long-time Strichartz estimate is a powerful tool
in dealing with nonlinear Schr\"{o}dinger equations
with semilinear inhomogeneous local term, see \cite{Dod12:3D NLS,
	Dod12:1D NLS, Dod12:2D NLS,Dod14:2D and 3D,KillV:3D NLS, MiMZ:NLS, Murphy12:NLS,Murphy14:NLS:DCDS,
	Murphy14:NLS, Vis:3D NLS}.
In this paper, we adopt the long-time
Strichartz estimate to study the scattering theory of nonlinear
Hartree equation, where the nonlinearity is nonlocal due to the convolution with $V$.
We combine this implement
with $I$-method \cite{CKSTT04:MRL, Taobook1}, the interaction Morawetz
estimate \cite{CKSTT04:CPAM,CKSTT07:AM,MiXZ:CPDE, Vi05} and local
smoothing effect of Schr\"{o}dinger operator \cite{KenPonV:Local SmoothE, RuizVeg:Local SmoothEff},
to obtain
our main result which reads
\begin{theoreme}\label{theorem}
	Let $n=4$, $\iota=1$, $3< \gamma<4$, $s>\gamma/2-1$ and $u_0 \in
	H^s(\mathbb{R}^4)$ be spherically symmetric. Then the Cauchy problem \eqref{eq:nlh} is
	globally well-posed. Moreover the solution satisfies
	\begin{equation*}
	\aligned \sup_{t\in\R}\big\|
	u(t)\big\|_{H^s\left(\mathbb{R}^4\right)} \leq
	C\big(\big\|u_0\big\|_{H^s}\big),
	\endaligned
	\end{equation*}
	and the solution scatters to a free wave, that is, there exist $u_0^{\pm}\in H^s(\mathbb{R}^4)$ such that
	\[
	\lim_{t\rightarrow \pm\infty}
	\|u(t)-e^{it\D}u_0^{\pm}\|_{H^s(\R^4)}=0.
	\]
\end{theoreme}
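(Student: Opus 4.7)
My plan is to apply the $I$-method of Colliander--Keel--Staffilani--Takaoka--Tao, supplemented by the interaction Morawetz estimate, the long-time Strichartz estimate, and the radial local smoothing effect of the Schr\"odinger operator, to push global well-posedness for the defocusing 4D Hartree equation down to the sharp regularity threshold $s>s_c=\gamma/2-1$. First I introduce the smoothing multiplier $I=I_N$ with radial symbol $m_N(\xi)$ equal to $1$ for $|\xi|\le N$ and to $(N/|\xi|)^{1-s}$ for $|\xi|\ge 2N$, so that $Iu\in H^1$ with $\|Iu\|_{\dot H^1}\lesssim N^{1-s}\|u\|_{\dot H^s}$. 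Rescaling via \eqref{scaling transf} with $\lambda=\lambda(N)$ chosen so that $E(Iu^\lambda)(0)\le\frac12$ reduces the problem to bounding $E(Iu^\lambda)(t)$ uniformly on an arbitrarily long interval and then inverting the scaling.

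The heart of the argument is an almost-conservation law for the modified energy. A direct computation gives
\begin{equation*}
E(Iu)(t_2)-E(Iu)(t_1)=\Re\iint_{\jrf}\overline{\partial_t Iu}\,\Bigl[I\bigl((V*|u|^2)u\bigr)-(V*|Iu|^2)Iu\Bigr]\,dx\,dt,
\end{equation*}
so after a Littlewood--Paley decomposition of each factor of $u$, the surviving contributions are those in which at least one factor has frequency $\gtrsim N$. These are estimated via bilinear Strichartz, the long-time Strichartz estimate in the style of \cite{Dod12:3D NLS} (which upgrades the high-frequency control of $u$ from bounded to long time intervals), and the radial local smoothing effect of \cite{KenPonV:Local SmoothE,RuizVeg:Local SmoothEff} (which exploits decay at spatial infinity to absorb losses coming from the nonlocal potential $V$). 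Combining these three ingredients yields $|E(Iu)(t_2)-E(Iu)(t_1)|\lesssim N^{-\beta}$ for some $\beta=\beta(s,\gamma)>0$, valid on every subinterval $J$ on which the interaction Morawetz quantity of $Iu$ is $O(1)$.

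To conclude, I use the interaction Morawetz estimate for Hartree (cf.~\cite{MiXZ:CPDE}) to bound the total Morawetz mass of $Iu$ on the rescaled time interval by a controlled power of $\lambda=\lambda(N)$, and partition that interval into $K$ good subintervals. Summing the almost-conservation increments and requiring $K\cdot N^{-\beta}\ll 1$ pins down the admissible relation between $N$ and $T$; the hypothesis $s>\gamma/2-1$ is precisely what makes this consistent with the scaling identity \eqref{eq:norm changes under scaling} and forces $E(Iu^\lambda)(t)\le 1$ throughout, which after unscaling gives the uniform $H^s$ bound on $u$. Scattering then follows from a standard argument: the uniform $H^s$ bound together with the global-in-time interaction Morawetz and Strichartz bounds on $u$ give $u\in L^q_tL^r_x(\R\times\R^4)$ for admissible Strichartz pairs, which in turn yields convergence of $e^{-it\Delta}u(t)$ in $H^s$ as $t\to\pm\infty$. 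The main obstacle is the $N^{-\beta}$ gain in the trilinear commutator: the nonlocal convolution with $|x|^{-\gamma}$ in the range $3<\gamma<4$ couples frequencies in a way that pure Strichartz estimates cannot handle at the sharp exponent, and it is only the simultaneous use of the long-time Strichartz estimate \emph{and} the radial local smoothing effect, rather than either one alone, that closes the bootstrap at $s_c=\gamma/2-1$.
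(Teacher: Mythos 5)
Your proposal correctly identifies the four ingredients the paper uses (the $I$-method, the interaction Morawetz estimate, the long-time Strichartz estimate, and local smoothing), but the architecture you describe is that of the classical Colliander--Keel--Staffilani--Takaoka--Tao $I$-method, not the one this paper actually uses, and the difference is where the sharp threshold is won or lost.

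You propose to show an increment bound $|E(Iu)(t_2)-E(Iu)(t_1)|\lesssim N^{-\beta}$ on each subinterval where the Morawetz quantity is small, and then to \emph{sum} these increments over the $K\approx N^{\alpha}$ many good subintervals furnished by the Morawetz bound, closing the bootstrap by requiring $K\cdot N^{-\beta}\ll 1$. This is precisely the step that is known to fall short of the scaling-critical exponent: the number of Morawetz subintervals grows polynomially in $N$, and a per-interval gain of $N^{-\beta}$ is not generically large enough to beat it down to $s=s_c$. The paper does not sum increments over subintervals at all. The Morawetz partition into $L\approx N^{\frac{(2\gamma-4)(1-s)}{s-(\gamma/2-1)}+1}$ subintervals is used only once, in Section~3, to assemble the crude \emph{base-case} bound $\|\nabla I_N u^\lambda\|_{U^2_\Delta(J\times\R^4)}\lesssim N^{\mathrm{poly}}$ from the per-interval local estimate of Proposition~\ref{pro:LWP} via the subadditivity \eqref{eq:dod}. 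From that base case, Proposition~\ref{LTST Estim} is proved by \emph{induction on frequency} (decreasing the frequency cutoff $M$ by factors of $8$), and iterating it gives the genuinely global-in-$J$ statement $\|\nabla I_N u^\lambda_{>N/100}\|_{L^2_tL^4_x(J\times\R^4)}\lesssim 1$ in \eqref{eq:longtimestrichartz}, uniformly over the entire bootstrap interval $J$. It is this uniform $L^2_tL^4_x$ control of the high-frequency piece of $\nabla Iu$ on all of $J$ that makes Proposition~\ref{pro:energy increment} go through directly: the total energy increment over the whole of $J$ is bounded by $CN^{-(4-\gamma)/2}$ with no $K$ or $L$ multiplying it, and then $N$ large closes the bootstrap. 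In short, the long-time Strichartz estimate is not an auxiliary tool for improving the per-interval increment; it is a mechanism for removing the summation over subintervals entirely. Your sketch also leaves out the technical apparatus that makes the LTSE proof work for a nonlocal nonlinearity --- the weighted Hardy--Littlewood--Sobolev inequality of Muckenhoupt--Wheeden (Corollary~\ref{coro:whls}), the radial frequency-localized Sobolev embedding (Proposition~\ref{prop:2.7}), the $U^p_\Delta/V^p_\Delta$ duality, and the modified Coifman--Meyer theorem (Lemma~\ref{lem:mCoifmanMeyer}) for the commutator symbol --- all of which are load-bearing in Sections~4 and~5 and have no counterpart in the decomposition you describe.
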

\begin{remark}
This is an unconditional global result from
the perspective of regularity assumption on initial data, where we do not assume any \emph{a priori} uniform boundedness on the solution in Sobolev norms with respect to time. In particular, the initial data is allowed to have infinite energy. If the energy of the initial data is finite, then it is well known that one easily obtains the scattering result by the interaction Morawetz estimate.
\end{remark}
\begin{remark}
	According to local well-posedness result in \cite{MiXZ:LWP},
	our result is sharp except of the critical case $s=s_c$ for $3< \gamma<4$.  As for the restriction about $\gamma > 3$ (that is, $s_c>\frac12$), it is corresponding to the $H^{\frac12}$-regularity of the solution in the interaction Morawetz estimate (see  Proposition \ref{pro:IME}).
\end{remark}

\begin{remark}
	Analogous unconditional global existence and scattering for the critical case $s=s_c$ for $2<\gamma<4$ is much more difficult. On one hand, there is no conserved quantity to be used. On the other hand, the $I$-method would also break down as can be seen in our proof. In this case, it is well-known in the litterature that for the semilinear Schr\"odinger equations with radial data, the uniform boundedness  of the critical norm $\dot{H}^{s_c}$ implies scattering \cite{Kenigmerle:H1/2 critical NLS}.
	An interesting problem is to relax this assumption by considering a discrete time sequence tending to the maximal time of existence, along which the solution is bounded in certain critical Sobolev norms, and showing that this weaker assumption also implies scattering, as investigated by Duyckaerts and the third author in \cite{DY} for wave equations.
\end{remark}
\begin{remark}
	The result is restricted to the radial setting because we need the radial Sobolev inequality in the frequency localized version, see Proposition \ref{prop:2.7}.
	The argument here can also be extended to all higher dimensions $n\geq 5$, where by using the double Duhamel formula, Miao Xu and Zhao \cite{MiXZ:CPDE} established the scattering theory
for energy critical case $\gamma=4$. Thus, it is interesting to remove the radial assumption in this low regularity problems for $\gamma$ smaller than but
close to $4$.
\end{remark}

Before giving some further remarks on our theorem, we briefly review the $I$-method on which
the proof of Theorem \ref{theorem} is based.

The study of a low regularity problem stimulates the development of the scattering in $L^2(\mathbb{R}^d)$ for the mass-critical problem. Dodson developed so-called longtime Strichartz estimates to prove the global well-posedness and scattering in $L^2$-space by making use of a concentration-compactness approach and the idea of $I$-method.
The $I-$method consists in smoothing out the $H^s$-initial data with $0<s<1$ in order to access a good local and global theory available at the $H^1$-regularity. To do this, one defines the Fourier multiplier $I$ by \[\widehat{Iu}(\xi):=m(\xi)\widehat{u}(\xi),\]
where $m(\xi)$ is a smooth radial decreasing cut-off function such that
\[
m(\xi)=
\begin{cases}
1,&|\xi|\leq N,\\
\left(\frac{|\xi|}{N}\right)^{s-1}, &|\xi|\geq 2N.
\end{cases}
\]
Thus, $I$ is the identity operator when acting on functions whose frequencies are localized to
$|\xi|\leq N$ and behaves like a multiplier of order $s-1$ with respect to higher frequencies. It is easy to show that the $I$ operator maps $H^s$ to $H^1$. Moreover, we have
\[ \|u\|_{H^s}\lesssim \|Iu\|_{H^1}\lesssim N^{1-s}\|u\|_{H^s}.\]
Thus, the energy is well-defined for $Iu(t)$ and to prove the problem
\eqref{eq:nlh} is globally well-posed in $H^s$, it suffices to show that $E(Iu(t))<+\infty$ for all $t\in \mathbb{R}$.
Since $Iu$ is not a solution to \eqref{eq:nlh}, the modified energy $E(Iu)(t)$ is not conserved. Thus the key idea is to show that $E(Iu)$ is  ``almost conserved" in the sense that its derivative $\frac{d}{dt}E(Iu(t))$
will decay with respect to a large parameter $N$. This will allow us to control $E(Iu)$ on time interval where the local solution exists, which allows us to iterate this procedure to obtain a global-in-time control of the solution by means of the bootstrap argument.

Turning to the proof of our main theorem, we are inspired by a recent work of Dodson \cite{Dod14:2D and 3D},
where he first implements the
\emph{long-time
Strichartz estimate} into the theory of
$I$-method. Notice that the long-time
Strichartz estimate appeared already in
Dodson's previous works on the scattering theory of mass critical NLS
\cite{Dod12:3D NLS,Dod15: NLS,Dod12:1D NLS,Dod12:2D NLS}. It is natural to compare these techniques with that
in \cite{Dod14:2D and 3D}. In fact,
the proof of the scattering of mass critical NLS was all based on a contradiction argument that assuming
the global existence and scattering fails, one must have a minimal mass blow-up solution which is
a critical element with various additional properties. The long-time Strichartz estimate in \cite{Dod12:3D NLS,Dod12:2D NLS,Dod12:1D NLS,Dod15: NLS} was established
for this kind of solutions and hence
is \emph{not} an \emph{a priori} estimate which should hold for an \emph{arbitrary} solution. On the contrary, the  longtime Strichartz estimate introduced in \cite{Dod14:2D and 3D} under the same name with Dodson's previous works,
was proved, in the framework of $I$-method, for every solutions, which satisfying certain assumption of the boundedness of the $I-$energy.
Notice also that Dodson adopted long time Strichartz estimate with $I$-method, the
interaction Morawetz estimate and local smoothing effect to show the
lower regularity of the defocusing nonlinear Schr\"{o}dinger
equations in \cite{Dod14:2D and 3D}, where a remarkable point is the
use of $U^p_\Delta$, $V^p_\Delta$ spaces incorporated with the local
smoothing effect of Schr\"odinger operators.
\medskip




The crux in the proof of Theorem \ref{theorem} is the deduction of the
long time Strichartz estimate for Hartree equations and the difficulty
arises naturally from handling the convolution operator in the nonlinear term of \eqref{eq:nlh}.
To overcome these problems, we shall employ some fractional order inequalities in weighted norms
established in \cite{muckenhoupt-wheeden} as well as a modified
Coifman-Meyer theorem. See Section 2 for details.
\medskip

By the end of this section, we outline the organization of this
paper as following: In Section 2, we introduce some notation and a couple of
propositions which will play important roles in the later context. We will also review the local  well-posedness theory for the Cauchy problem \eqref{eq:nlh}.  In Section 3, we review  and outline the $I$-method at our disposal. We will also obtain in this section a uniform local estimate. In Section 4, we prove long time
Strichartz estimate and obtain the boundedness of high frequency
part of $\n Iu$ in the endpoint Strichartz space $L^2_tL^4_x(J
\times \R^4)$. Finally in Section 5, we use long time Stricharts
estimate to control the increment of the modified energy $E(Iu)(t)$,
which will conclude the proof of Theorem \ref{theorem} by the bootstrap argument.

\subsection*{Acknowledgements.}
The
authors would like to thank Professor Benjamin Dodson for sharing his valuable insights on the long time Strichartz estimate. C. Miao and G. Xu are partly supported by the NSF of China (No. 11671046, No. 11671047, No. 11726005).

%
%
%
%

\section{Notation and preliminaries}
Throughout this paper, we will use the following
notation for the sake of brevity in exposition. The tempered
distribution is denoted by $\mathcal{S}'(\R^n)$. We use $A\lesssim B$ to
denote an estimate of the form $A\leq CB$ for some constant $C$. If
$A\lesssim B$ and $B\lesssim A$, we say that $A\approx B$. We write
$a\pm$ to mean $a\pm\e$ where $\e$ may be taken arbitrarily small.
We use $2^{\mathbb{Z}}$ to denote the set of dyadic integers of the form $2^j$ with $j\in \mathbb{Z}$.
We use $\langle f, g\rangle$ to denote the inner product  $\int_{\R^4_x}f(x)\overline{g(x)}dx$. Given $\alpha\in\R$, we denote by $\lfloor\alpha\rfloor$
the largest integer bounded by $\alpha$.
\subsection{Definition of spaces and Strichartz estimates}
We use $L^r_x(\R^n)$ to denote the Lebesgue space of functions
$f:\R^n\rightarrow \C$ whose norm
\[\|f\|_{L^r_x}:=\Bigl(\int_{\R^n}|f(x)|^rdx\Bigr)^\frac1r\]
is finite, with the standard modification when $r=\infty$. We also
define the space-time Lebesgue spaces $L^q_tL^r_x([a,b]\times\R^n)$ which are endowed
with the norm
\[\|u\|_{L^q_tL^r_x([a,b]\times\R^n)}:=\Bigl(\int_a^b\|u(t,\cdot)\|^q_{L^r_x}dt\Bigr)^\frac1q\]
for any space-time slab $[a,b]\times\R^n$, with the standard
modification when either $q$ or $r$ is infinite. If $q=r$, we
abbreviate $L^q_tL^r_x$ by $L^q_{t,x}$.

We define the Fourier transform of $f(x)\in L^1_x$ by
\[\wh f(\xi)=(2\pi )^{-\frac n2}\int_{\R^n}e^{-ix\cdot \xi}f(x)dx.\]
The fractional differential operator $|\n|^\al$ of order $\al$ is
defined via Fourier transform
\[\wh{|\n|^\al f}(\xi):=|\xi|^\al \wh f(\xi).\]
Define the Schr\"odinger semi-group
$e^{it\Delta}$ as
\[
e^{it\Delta}u(x)=\int_{\mathbb{R}^n}e^{ix\cdot\xi}
\widehat{u}(\xi)e^{it|\xi|^2}d\xi.
\]
We define the Sobolev space
$W^{s,r}(\mathbb{R}^4)$ as the class of distributions $f$ satisfying
\[ \|f\|_{W^{s,r}} =\|(1+|\nabla|)^s f\|_{L^r(\mathbb{R}^4)}<+\infty.\]
In particular, we denote $W^{s,2}$ by $H^s$
the Hilbert Sobolev space.

We will use the following Littewood-Paley decompositions. Let
$\varphi(\xi)\geq 0$ be a smooth function supported in the ball
$\{\xi\in \R^n:|\xi|\leq 2\}$ which equals to $1$ when $|\xi|\leq
1$. For each dyadic integer $M$, the Littewood-Paley projector
$P_{\leq M}, P_{>M}$ and $P_M$ are defined via Fourier transform
respectively as followings
\[\wh {P_{\leq M}f}(\xi)=\varphi\left(\frac{\xi}{M}\right)\wh f(\xi),\;P_{> M}f=f-P_{<M}f,\; P_Mf=P_{\leq 2M}f - P_{<M}f.\]
For brevity, we will also write $f_{\leq M}$ instead of $P_{\leq
	M}f$ and write $f_M$ rather than $P_M f$.

Consequently, we have the inhomogeneous Littlewood-Paley
decomposition
\[f(x)=P_{\leq 1}f(x)+\sum_{N\geq 1} P_{N}f(x),\]
where $N$ is dyadic integers.
The homogeneous Littlewood-Paley decomposition reads
\[f(x)=\sum_{N\in 2^\mathbb{Z}} P_{N}f(x).\]

Now, we state the following Strichartz estimate for $n=4$. Let
$2\leq q,r \leq\infty $. We say $(q,r)$ is $\al$-admissible and
write $(q,r)\in \Ld_{\alpha}$ if
\[4\bigl(\frac12-\frac1r\bigr)-\frac2q=\al.\]
In particular, we say $(q,r)$ is an admissible pair when $\al=0$.
\begin{proposition}\label{pp:Strichart}\cite{Strichartz,GV,GV1,Ca03, KeT98}
	Let $(q,r)$ and $(\tilde q,\tilde r)$ be two arbitrary admissible pairs and $J\subset \R$.
	Suppose $u$ is a solution to
	\begin{equation*}
	\left\{ \aligned
	iu_t+\D u=&G(t,x),(t,x)\in \jrf,\\
	u(0)=&u_0(x)\in L^2(\R^4).
	\endaligned \right.
	\end{equation*}
	Then there exists $C_0>0$ such that  we have
	\begin{equation}
	\label{eq:strchartz}
	\|u\|_{L^q_tL^r_x(\jrf)}\leq C_0\left( \|u_0\|_{L^2(\R^4)}+\|G\|_{L^{\tilde q'}_tL^{\tilde r'}_x(\jrf)}\right),
	\end{equation}
	where the primed exponents denote H\"older dual exponents.
\end{proposition}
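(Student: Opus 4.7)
The plan is to follow the classical $TT^*$ strategy of Ginibre--Velo, supplemented by the bilinear interpolation of Keel--Tao at the sharp endpoint. Two ingredients suffice: the unitarity $\|e^{it\D}f\|_{L^2_x}=\|f\|_{L^2_x}$ and the dispersive pointwise bound
\[
\|e^{it\D}f\|_{L^\infty_x(\R^4)} \lesssim |t|^{-2}\|f\|_{L^1_x(\R^4)},
\]
which follows from the explicit oscillatory kernel $(4\pi it)^{-2}e^{i|x|^2/4t}$. Riesz--Thorin interpolation then yields $\|e^{it\D}f\|_{L^r_x}\lesssim |t|^{-2(1/2-1/r)}\|f\|_{L^{r'}_x}$ for every $r\in[2,\infty]$.

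First, for the homogeneous estimate $\|e^{it\D}u_0\|_{L^q_tL^r_x}\lesssim\|u_0\|_{L^2_x}$ at a non-endpoint admissible pair (i.e. $q>2$), I would apply the $TT^*$ identity, reducing matters to the boundedness of
\[
TT^*F(t) = \int_{\R} e^{i(t-s)\D}F(s)\,ds
\]
from $L^{q'}_tL^{r'}_x$ to $L^q_tL^r_x$. Minkowski in $x$ together with the dispersive bound reduces this to the one-dimensional Hardy--Littlewood--Sobolev inequality with exponent $2(1/2-1/r)=2/q<1$, which is valid precisely because $q>2$. The dual estimate for the adjoint follows by duality, and the inhomogeneous mixed-pair bound with $q,\tilde q>2$ is then obtained by the Christ--Kiselev lemma, which lets one pass from the non-retarded operator above to the Duhamel integral $\int_0^t e^{i(t-s)\D}G(s)\,ds$ since the strict inequalities $q>\tilde q'$ are satisfied.

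The endpoint admissible pair in dimension four is $(q,r)=(2,4)$, and here the direct $TT^*$ argument diverges logarithmically, while the Christ--Kiselev lemma is also inapplicable since the necessary strict inequality $q>\tilde q'$ fails. To reach this endpoint I would invoke the bilinear interpolation of Keel--Tao: decompose dyadically in $|t-s|\sim 2^j$, apply on each dyadic annulus a bilinear $L^2_x\times L^2_x$ estimate obtained by complex interpolation between the trivial bound and the dispersive bound, and sum via an atomic decomposition of the time variable which just barely closes the geometric series. Restricting all of the resulting estimates to the slab $\jrf$ is then a matter of integrating over $t\in J$, and the constant $C_0$ is universal by translation invariance in $t$. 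The main obstacle throughout is precisely this endpoint in $n=4$; the non-endpoint estimates are completely standard.
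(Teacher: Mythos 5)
The paper does not prove this proposition; it is stated as a citation to the classical literature (\cite{Strichartz,GV,GV1,Ca03, KeT98}) and used as a black box. Your sketch is a correct and reasonably complete high-level reconstruction of the standard argument in those references: the $L^2\to L^2$ unitarity plus the $|t|^{-2}$ dispersive bound in $\R^4$, the $TT^*$ reduction to one-dimensional Hardy--Littlewood--Sobolev for non-endpoint admissible $q>2$, the Christ--Kiselev lemma (with the correct condition $q>\tilde q'$) for the retarded inhomogeneous non-endpoint estimates, and the Keel--Tao bilinear dyadic decomposition with atomic interpolation in time to reach the endpoint $(2,4)$. One point worth making explicit if you were to flesh this out: at the endpoint the retarded (Duhamel) inhomogeneous estimate is \emph{not} obtained from the non-retarded one via Christ--Kiselev, as you rightly observe; Keel--Tao prove the retarded bilinear form estimate directly, so the ``atomic decomposition'' step has to be run on the retarded kernel $\mathbf{1}_{\{s<t\}}e^{i(t-s)\Delta}$ itself rather than derived from the free one. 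With that understood, your outline matches the proof the cited sources actually give.
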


We recall the local well-posedness theory for \eqref{eq:nlh}.

\begin{proposition}
	\label{PP:LWP}
	Let $2<\gamma<4$ and $\displaystyle\frac{\gamma}{2}-1<s<1$. Then for every $u_0\in H^s(\mathbb{R}^4)$, there exists $T=T(\|u_0\|_{H^s})>0$ and a unique solution $u(t,x)$ of \eqref{eq:nlh} such that
	\[
	u\in C_t H^s_x([0,T]\times\mathbb{R}^4)
	\cap L^q_t W^{s,r}_x([0,T]\times\mathbb{R}^4),
	\]
	for all $(q,r)\in \Lambda_0$.
	
	Moreover, if we denote by $T_*=\sup\{T: T=T(\|u_0\|_{H^s}) \text{ given as above}\} $, then we have the conservation of mass
	\[ \|u(t,\cdot)\|_{L^2(\mathbb{R}^4)}=\|u_0\|_{L^2(\mathbb{R}^4)}, \quad \text{for all}\;0<t<T_*,\]
	and the blow-up criterion that
	$T_*<+\infty$
	implies
	\[\lim_{t\rightarrow T_*}\|u(t,\cdot)\|_{H^s(\mathbb{R}^4)}=+\infty.\]
\end{proposition}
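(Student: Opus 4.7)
The plan is to run a contraction mapping argument on the Duhamel map
\[\Phi(u)(t) := e^{it\Delta}u_0 - i\int_0^t e^{i(t-\tau)\Delta} F(u(\tau))\,d\tau\]
in a closed ball of the Strichartz space $Y_T := C([0,T];H^s(\R^4)) \cap L^q_t W^{s,r}_x([0,T]\times\R^4)$ for a suitably chosen admissible pair $(q,r)\in \Lambda_0$. By Proposition \ref{pp:Strichart} applied with $s$ derivatives, it suffices to establish a trilinear bound of the form
\[\|F(u)\|_{L^{\tilde q'}_t W^{s,\tilde r'}_x([0,T]\times\R^4)} \lesssim T^\theta \|u\|_{Y_T}^3,\]
together with an analogous Lipschitz estimate for $F(u)-F(v)$, where $\theta=\theta(s,\gamma)>0$ reflects the subcritical gap $s-s_c>0$. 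The factor $T^\theta$ then renders $\Phi$ a strict contraction on a ball of radius comparable to $\|u_0\|_{H^s}$ once $T=T(\|u_0\|_{H^s})$ is taken sufficiently small, producing the unique local solution in $Y_T$.

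To prove the nonlinear estimate, expand $F(u)=(|x|^{-\gamma}*|u|^2)\,u$ and apply the fractional Leibniz rule (Kato--Ponce) to distribute the $s$ spatial derivatives either onto the convolution factor or onto the pointwise $u$. The convolution is controlled by the Hardy--Littlewood--Sobolev inequality $\||x|^{-\gamma}*g\|_{L^p(\R^4)}\lesssim \|g\|_{L^{p_1}}$ with $\tfrac{1}{p_1}-\tfrac{1}{p}=1-\tfrac{\gamma}{4}$, and a further Leibniz step distributes any derivative landing on $|u|^2$ between its two factors. One then selects Lebesgue exponents so that each remaining factor of $u$ is measured in a Strichartz norm $L^{q_j}_t W^{s,r_j}_x$ with $(q_j,r_j)\in\Lambda_0$, passing between $|\nabla|^s$ and pure Lebesgue norms via the Sobolev embedding $W^{s,r}(\R^4)\hookrightarrow L^p(\R^4)$ for $\tfrac{1}{p}=\tfrac{1}{r}-\tfrac{s}{4}$. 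The hypothesis $s>\gamma/2-1$ is precisely what leaves a strictly positive excess in the time exponent after matching the scaling identities, so that H\"older in time extracts the desired $T^\theta$.

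Mass conservation is then obtained by first working with smooth approximants $u_{0,n}\in H^1$, for which pairing \eqref{eq:nlh} with $u_n$ and using the realness of the Hartree potential yields $\partial_t\|u_n(t)\|_{L^2}^2=0$ by direct integration by parts; the continuous dependence of the local flow in $L^2$ promotes this identity to general $u_0\in H^s$. The blow-up criterion follows by contradiction: if $T_*<\infty$ while $\|u(t)\|_{H^s}$ does not blow up, pick $t_n\nearrow T_*$ with $\|u(t_n)\|_{H^s}$ bounded; applying the local existence theorem with datum $u(t_n)$ produces a solution on $[t_n,t_n+\delta]$ for some $\delta>0$ depending only on $\sup_n\|u(t_n)\|_{H^s}$, which eventually extends past $T_*$ and contradicts maximality. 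The principal technical obstacle is the trilinear estimate: since the Hartree nonlinearity combines a long-range convolution kernel with a pointwise cubic product, the Hardy--Littlewood--Sobolev exponents for $|x|^{-\gamma}$ and the Kato--Ponce distribution of $|\nabla|^s$ must be simultaneously tuned to admissible Strichartz indices, a balance that tightens as $s$ approaches the scaling-critical threshold $s_c=\gamma/2-1$.
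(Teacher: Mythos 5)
Your proof takes essentially the same route as the paper: a contraction mapping on an admissible Strichartz--Sobolev space, with the key nonlinear estimate reduced via the fractional Leibniz rule, Hardy--Littlewood--Sobolev for the convolution, Sobolev embedding, and H\"older in time to extract the positive power $T^{s-\gamma/2+1}$ from the subcritical gap. The paper simply makes your exponent choices concrete (taking the admissible pair $(\rho,\sigma)=(\tfrac{8}{\gamma-2s},\tfrac{16}{8+2s-\gamma})$ as the workhorse space and using the embedding $W^{s,\sigma}(\mathbb{R}^4)\hookrightarrow L^{16/(8-2s-\gamma)}(\mathbb{R}^4)$), and it leaves the mass conservation and blow-up criterion implicit as standard consequences, whereas you sketch them out; both arguments are fine.
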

\begin{proof}
	Let
	$$(\rho,\sigma)=\left(\frac{8}{\gamma-2s},\frac{16}{8+2s-\gamma} \right).$$
It suffices to show the map
	\[\Phi_{u_0}:u\mapsto
	e^{it\Delta}u_0(x)
	-i\int_0^t e^{i(t-\tau)\Delta}
	F(u(\tau,x))d\tau\]
defined on the space
\[
	\mathfrak{X}_T=
	\left\{
	u\in L^\rho_t W^{s,\sigma}_x([0,T]\times\mathbb{R}^4):
	\|u\|_{ L^\rho_t W^{s,\sigma}_x([0,T]\times\mathbb{R}^4)}
	\leq 2MC_0
	\right\}
\]
is a contraction for $T$ small enough, where $C_0$ is the constant in \eqref{eq:strchartz} and $M=\|u_0\|_{H^s}$.
	
	To see this, it suffices to show the following estimate
\begin{align*}
	\|\Phi_{u_0}(u)\|_{L^\rho_t W^{s,2}_x([0,T]\times \mathbb{R}^4)}
	\leq C_0 \|u_0\|_{H^s(\mathbb{R}^4)}
	+C\, T^{s-\frac{\gamma}{2}+1}\, \|u\|^3_{L^\rho_tW^{s,\sigma}_x([0,T]\times \mathbb{ R}^4)}.
	\end{align*}
	This is reduced by Strichartz's estimate
	further to
	\begin{equation}
	\label{1.5}
	\|(V*|u|^2)u\|_{L^{\rho'}_t W^{s,\sigma'}_x([0,T]\times\mathbb{R}^4)}
	\lesssim T^{s-\frac{\gamma}{2}+1} \,\|u\|^3_{L^\rho_tW^{s,\sigma}_x([0,T]\times \mathbb{ R}^4)},
	\end{equation}
	where $\rho'=\rho/(\rho-1)$ and $\sigma'=\sigma/(\sigma-1)$.
	By the chain rule of the fractional order derivatives and  Hardy-Littlewood-Sobolev inequality, we get
	\[\|(V*|u|^2)u\|_{W^{s,\sigma'}_x}\lesssim
	\|u\|_{W^{s,\sigma}_x}\|u\|^2_{L^{\frac{16}{8-2s-\gamma}}_x}.
	\]
	By using Sobolev embedding $W^{s,\sigma}(\mathbb{R}^4)\subset L^{16/(8-2s-\gamma)}(\mathbb{R}^4)$ and H\"older's inequality, we get
	\begin{align*}
	& \|(V*|u|^2)u\|_{L^{\rho'}_t W^{s,\sigma'}_x([0,T]\times\mathbb{R}^4)}\\
	\lesssim&\left(\int_{0}^{T}
	\|u(t)\|_{W^{s,\sigma}}^{3\rho'}dt
	\right)^{1/\rho'}
	\lesssim T^{s-\frac{\gamma}{2}+1}\|u\|^3_{L^\rho_tW^{s,\sigma}_x([0,T]\times\mathbb{R}^4)} .
	\end{align*}
\end{proof}
We shall use the $U^p_\D$ and $V^p_\D$ spaces adapted to Schr\"odinger equations.
Denote by
$\mathcal Z$ the set of finite partitions
$-\infty<t_0<t_1<\cdots<t_K\leq\infty$ of the real line. If
$t_K=\infty$, we use the convention that $v(t_K):=0$ for all
functions $v:\R\rightarrow L^2(\R^n)$.
The idea and techniques of the $U^p_\D, V^p_\D$ spaces was first used in \cite{KT}.

\begin{definition}
	Let $1\leq p<\infty$. For $\{t_k\}_{k=0}^K\in\mathcal Z$ and $\{\phi_k\}^{K-1}_{k=0}\subset L^2(\R^n)$ with
	$\sum_{k=0}^{K-1}\|\phi_k\|^p_{L^2(\R^n)}=1$, we define a $U^p_\D-$atom $a(t,x)$ as a piecewise solution to the linear
	Schr\"odinger equation
	\[a(t,x)=\sum_{k=1}^{K}1_{[t_{k-1},\,t_k)}(t)\,\ppgtr\phi_{k-1}(x).\]
	The atomic space $ U^p_\D(\R; L^2(\R^n))$  consists of all $u:\R\rightarrow L^2(\R^n)$ such that there exists
	a series of $U^p_\D-$atoms $\{a_j\}_j$ along with $\{\ld_j\}_j$
	\[u=\sum_{j=1}^\infty \ld_j\, a_j ,\quad  \sum_{j=1}^\infty|\ld_j|<\infty.\]
	For any $1\leq p<\infty$, we define $U^p_\D-$norm as
	\[\|u\|_{U^p_\D}=\inf\Bigl\{\sum|\ld_j|: u=\sum \ld_j\, a_j, \; a_j \;\text{are}\; U^p_\D\text{-atoms}\Bigr\}.\]
\end{definition}
The normed spaces $U^p_\D(\R;L^2(\R^n))$ are complete and
$U^p_\D\subset L^\infty(\R;L^2(\R^n))$. Moreover, each $u\in U^p_\D$
is right continuous and continuous except at countably many points.

\begin{lemma}\cite{Dod12:2D NLS}
	Suppose $I=I_1\cup I_2$, $I_1=[a,b]$, $I_2=[b,c]$ with $a\leq b\leq c$. Then
	\begin{equation}\label{eq:dod}
	\|u\|^p_{U^p_\D(I\times\R^n)}\leq \|u\|^p_{U^p_\D(I_1\times\R^n)}+\|u\|^p_{U^p_\D(I_2\times\R^n)}.
	\end{equation}
\end{lemma}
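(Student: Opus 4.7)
The plan is to build an atomic decomposition of $u$ on the whole interval $I$ from near-optimal decompositions on $I_1$ and $I_2$, exploiting the fact that the normalization $\sum_k\|\phi_k\|_{L^2}^p=1$ built into a $U^p_\D$-atom already contains the $p$-th power; this lets me combine atoms supported on disjoint time intervals by an $\ell^p$-rule rather than by the clumsier $\ell^1$ triangle inequality.

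Given $\epsilon>0$, I would choose near-optimal decompositions
\[u|_{I_1}=\sum_j\lambda_j a_j,\qquad u|_{I_2}=\sum_k\mu_k b_k,\]
with $A:=\sum_j|\lambda_j|\leq\|u\|_{U^p_\D(I_1\times\R^n)}+\epsilon$ and $B:=\sum_k|\mu_k|\leq\|u\|_{U^p_\D(I_2\times\R^n)}+\epsilon$. Since atoms are closed under $\phi_k\mapsto -\phi_k$, I may assume $\lambda_j,\mu_k\geq 0$. I then extend each $a_j$ by the zero Schr\"odinger solution on $I_2$ and each $b_k$ by the zero solution on $I_1$; the extensions $\tilde a_j$, $\tilde b_k$ remain $U^p_\D$-atoms on $I$, since an appended zero piece contributes nothing to $\sum_\ell\|\phi_\ell\|_{L^2}^p$. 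The key structural observation to isolate is that for any $\alpha,\beta\geq 0$ with $\alpha^p+\beta^p=1$, the function $\alpha\tilde a_j+\beta\tilde b_k$ is itself a $U^p_\D$-atom on $I$: its partition is the disjoint-in-time concatenation of the two partitions, and its normalization sum equals $\alpha^p\cdot 1+\beta^p\cdot 1=1$.

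With this observation in hand I would exhibit the decomposition
\[u=\sum_{j,k}c_{j,k},\qquad c_{j,k}:=\frac{\lambda_j\mu_k}{B}\,\tilde a_j+\frac{\lambda_j\mu_k}{A}\,\tilde b_k,\]
whose validity is an immediate check using $\sum_j\lambda_j=A$ and $\sum_k\mu_k=B$. Applying the structural observation to the pair $(\alpha,\beta)$ proportional to $(\lambda_j\mu_k/B,\lambda_j\mu_k/A)$ realizes $c_{j,k}$ as $\gamma_{j,k}$ times a $U^p_\D$-atom on $I$, where $\gamma_{j,k}=\lambda_j\mu_k(A^p+B^p)^{1/p}/(AB)$. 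Summing,
\[\sum_{j,k}|\gamma_{j,k}|=\frac{(A^p+B^p)^{1/p}}{AB}\sum_{j,k}\lambda_j\mu_k=(A^p+B^p)^{1/p},\]
so $\|u\|_{U^p_\D(I\times\R^n)}\leq(A^p+B^p)^{1/p}$; raising to the $p$-th power and sending $\epsilon\to 0$ gives \eqref{eq:dod}.

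The only real obstacle is purely formal bookkeeping: one has to verify that the extension-by-zero and the combined partition fit verbatim into the $U^p_\D$-atom definition (without silently doubling the normalization or introducing overlapping time slices). Once the partitions are written out this is routine. The algebraic heart of the argument is the identity in the last display, which is sharp precisely because the ratios $(\lambda_j\mu_k/B):(\lambda_j\mu_k/A)=A:B$ are independent of $j$ and $k$, so that the underlying Minkowski-type estimate is saturated by the specific choice of $c_{j,k}$ above.
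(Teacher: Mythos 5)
Your proof is correct, and the key structural observation you isolate—that for time-disjoint $U^p_\Delta$-atoms $\tilde a_j$ on $I_1$ and $\tilde b_k$ on $I_2$ and nonnegative $\alpha,\beta$ with $\alpha^p+\beta^p=1$, the combination $\alpha\tilde a_j+\beta\tilde b_k$ is again a $U^p_\Delta$-atom on $I$ (concatenated partitions, normalization $\alpha^p\cdot 1+\beta^p\cdot 1=1$)—is exactly what makes the $\ell^p$ rather than $\ell^1$ combination possible. The bilinear pairing $c_{j,k}$ with the $1/(AB)$ weights then saturates this, and the algebra giving $\sum_{j,k}|\gamma_{j,k}|=(A^p+B^p)^{1/p}$ checks out. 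The paper does not supply a proof (it cites \cite{Dod12:2D NLS}), but your argument is the standard one for this concatenation property of $U^p$-type spaces, so this is essentially the same route rather than a new one. Two trivial remarks: you should dispose of the degenerate cases $A=0$ or $B=0$ separately (the identity divides by $AB$), and for complex coefficients the phase should be absorbed into the atom before assuming $\lambda_j,\mu_k\geq 0$; both are immediate.
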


\begin{definition}
	Let $1\leq p<\infty$. We define $V^p_\D(\R; L^2(\R^n))$ as the space of all right continuous functions
	$v\in L^\infty_t L^2_x$ such that the following norm is finite
	\[\|v\|^p_{V^p_\D}:=\|v\|^p_{L^\infty_tL^2_x}+\sup_{\{t_k\}^K_{k=0}\in\mathcal Z}\sum_k\bigl\|e^{-it_k\D}v(t_k)-e^{-it_{k+1}\D}v(t_{k+1})\bigr\|^p_{L^2_x}.\]
\end{definition}

These function spaces enjoy several well-known embedding relations
as summarized below.
\begin{proposition} \cite{Dod14:2D and 3D, HaHeK:KP, KoTaV:book}
	For $1\leq p<q<\infty$, we have
	\begin{equation*}
	U^p_\D\subset V^p_\D\subset U^q_\D\subset L^\infty_tL^2_x.
	\end{equation*}
	Let $DU^p_\D$ be the space induced by $U^p_\D$, namely
	\[DU^p_\D=\{(i\p_t+\D)u:u\in U^p_\D\}.\]
	Then the dual space of $DU^p_\D$ is $V^{p'}_\D$, {\em i.e.}
	\begin{equation*}
	(DU^p_\D)^*=V^{p'}_\D,
	\end{equation*}
	and we have
	\begin{equation*}
	\|u\|_{U^p_\D}\lesssim \|u(0)\|_{L^2}+\|(i\p_t+\D)u\|_{DU^p_\D}.
	\end{equation*}
	Moreover, we have
	$U^q_\D\subset L^q_tL^r_x(\jrf)$ for any $(q,r)\in\Ld_0$ and $L^{q'}_tL^{r'}_x(\jrf)\subset DU^2_\D(\jrf)$ when $(q,r)$ is an admissible pair and  $q>2$.
	These spaces are stable under truncation in time by multiplying a characteristic function of a time interval $J$
	\[\chi_J:U^p_\D\rightarrow U^p_\D,\;\chi_J:V^p_\D\rightarrow V^p_\D.\]
\end{proposition}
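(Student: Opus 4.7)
The plan is to reduce each assertion to the abstract theory of $U^p$/$V^p$ spaces by noting that $u\in U^p_\D$ (resp.\ $V^p_\D$) iff $v(t):=e^{-it\D}u(t)$ lies in the classical $U^p(\R;L^2)$ (resp.\ $V^p(\R;L^2)$) space; the Schr\"odinger evolution then disappears and one is left with atomic/variation spaces on the real line, as developed by Koch--Tataru. Each claim will then be verified by testing on a single $U^p_\D$-atom and summing the atomic decomposition.

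For the chain $U^p_\D\subset V^p_\D\subset U^q_\D\subset L^\infty_tL^2_x$: the first inclusion follows by direct computation on an atom $a=\sum_k 1_{[t_{k-1},t_k)}\ppgtr\phi_{k-1}$, since $e^{-it\D}a$ is piecewise constant with jumps $\phi_k-\phi_{k-1}$, so its $V^p$ seminorm is bounded by $\bigl(\sum_k\|\phi_k-\phi_{k-1}\|_{L^2}^p\bigr)^{1/p}\lesssim 1$ via the triangle inequality together with the atomic normalisation $\sum\|\phi_{k-1}\|_{L^2}^p\leq 1$. The third inclusion is immediate from $\|\phi_{k-1}\|_{L^2}\leq 1$. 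The middle inclusion $V^p_\D\subset U^q_\D$ for $q>p$ is the main technical step and is proved by a greedy algorithm: partition $\R$ into the maximal intervals on which $e^{-it\D}v$ oscillates by at most $2^{-j}$; the number of such intervals is $N_j\lesssim 2^{jp}\|v\|_{V^p_\D}^p$. Approximating $v$ on each such interval by its left endpoint value yields a step function $v_j$, and the telescoping sum $v=v_{j_0}+\sum_{j>j_0}(v_j-v_{j-1})$ gives an atomic decomposition in $U^q_\D$ that converges precisely because $q>p$.

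For the duality $(DU^p_\D)^*=V^{p'}_\D$, the key estimate $|\int\langle(i\p_t+\D)u,v\rangle\,dt|\lesssim\|u\|_{U^p_\D}\|v\|_{V^{p'}_\D}$ is again tested on a $U^p_\D$-atom, where integration by parts and passing to $e^{-it\D}v$ reduces it to the discrete H\"older inequality $\bigl|\sum_k\langle\phi_k-\phi_{k-1},e^{-it_k\D}v(t_k)\rangle\bigr|\leq\bigl(\sum_k\|\phi_k-\phi_{k-1}\|_{L^2}^p\bigr)^{1/p}\|v\|_{V^{p'}_\D}$. A Hahn--Banach extension then identifies $V^{p'}_\D$ with the full dual of $DU^p_\D$. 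The energy estimate $\|u\|_{U^p_\D}\lesssim\|u(0)\|_{L^2}+\|(i\p_t+\D)u\|_{DU^p_\D}$ follows from writing $u=\ppgtr u(0)+w$ with $w(t)=-i\int_0^t e^{i(t-s)\D}(i\p_s+\D)u(s)\,ds$: the free part is a one-step atom of $U^p_\D$-norm $\|u(0)\|_{L^2}$, while $\|w\|_{U^p_\D}\lesssim\|(i\p_t+\D)u\|_{DU^p_\D}$ is precisely the definition of $DU^p_\D$.

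For the Strichartz-type embedding $U^q_\D\subset L^q_tL^r_x$ with $(q,r)\in\Ld_0$, test on an atom and apply Proposition \ref{pp:Strichart} piecewise: $\|a\|_{L^q_tL^r_x}^q=\sum_k\|\ppgtr\phi_{k-1}\|_{L^q_tL^r_x([t_{k-1},t_k)\times\R^4)}^q\lesssim\sum_k\|\phi_{k-1}\|_{L^2}^q\leq 1$. The dual embedding $L^{q'}_tL^{r'}_x\subset DU^2_\D$ for admissible $(q,r)$ with $q>2$ follows by duality against $V^2_\D$ using the pairing established above. Stability under temporal truncation is clear from the atomic definition: restricting an atom to a sub-interval either gives zero or another atom after renormalisation, while the variation of $e^{-it\D}v$ can only decrease upon truncation. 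The main obstacle, as indicated, is the greedy decomposition underlying $V^p_\D\subset U^q_\D$; once this is available, every other assertion is either a routine atomic computation or an abstract duality argument.
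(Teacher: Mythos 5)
The paper states this proposition without proof, citing \cite{Dod14:2D and 3D, HaHeK:KP, KoTaV:book}; your argument reconstructs the standard development of $U^p_\D$/$V^p_\D$ theory from those references and is essentially correct. The one slip is in the last step: stability of $V^p_\D$ under multiplication by $\chi_J$ is not because ``the variation can only decrease'' --- truncation actually \emph{creates} two new jumps, to and from zero, at the endpoints of $J$ --- but because those two extra jump terms are each bounded by $\|v\|_{L^\infty_t L^2_x}$, a quantity which the paper's definition of $\|\cdot\|_{V^p_\D}$ already includes, so that $\|\chi_J v\|_{V^p_\D}\lesssim\|v\|_{V^p_\D}$ still holds.
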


\subsection{Some known estimates}
In this part, we collect several well known results which will
be used later.

The Littewood-Paley projectors commute with derivative operators,
the free propagator $\ppgtr$ and the conjugation operation.
Moreover, they are self-adjoint  and bounded on every $L^r_x$ and
$\dot H^s_x$ space for $1\leq r\leq \infty$ and $s\geq 0$. In
addition, they obey the following Sobolev and Bernstein estimates
\[\|P_{\geq M}f\|_{L^p}\lesssim  M^{-s}\||\n|^s P_{\geq M}f\|_{L^p},\]
\[\||\n|^s P_{\leq M}f\|_{L^q}\lesssim M^{s+n(\frac1p-\frac1q)}\|P_{\leq M}f\|_{L^p},\]
\[\||\n|^{\pm s}P_M f\|_{L^q}\lesssim M^{\pm s+n(\frac1p-\frac1q)}\|P_{ M}f\|_{L^p},\]
whenever $s\geq 0$ and $1\leq p\leq q\leq \infty$.

The following Sobolev type inequality for radial functions will be used in Section 3.
\begin{proposition}
	\label{prop:2.7}
	Assume $n\geq 2$ and $M> 0$ is dyadic. Then there is a constant $C>0$ such that
	\begin{equation}\label{eq:rad sobolev}
	\sup_{x\in \R^n}|x|^{\frac{n-1}2}|P_M \,u(x)|\leq C\,M^\frac12\|P_Mu\|_{L^2(\R^n)},
	\end{equation}
	for every radial function $u\in L^2(\R^n)$.
	\end{proposition}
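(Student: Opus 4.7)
The plan is to reduce the frequency-localized statement to the classical (unlocalized) radial Sobolev embedding and then import the frequency factor $M^{1/2}$ via Bernstein. Since $u$ is radial and the multiplier defining $P_M$ is a radial function of frequency, the projection $P_M u$ is again radial; write $(P_M u)(x) = F(|x|)$ for a suitable $F:(0,\infty)\to\C$. The object to control is therefore the weighted pointwise norm of a single radial function.

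For the unlocalized inequality, I would use the one-dimensional fundamental theorem of calculus in the radial variable: assuming $F$ is smooth and decays at infinity (as one may for $P_M u$ by density, since its Fourier transform is compactly supported),
\[
r^{n-1}|F(r)|^{2} \;=\; -\int_{r}^{\infty}\frac{d}{ds}\bigl(s^{n-1}|F(s)|^{2}\bigr)\,ds \;\leq\; 2\int_{r}^{\infty} s^{n-1}\,|F(s)|\,|F'(s)|\,ds,
\]
where I have dropped the nonnegative contribution of $(n-1)s^{n-2}|F(s)|^{2}$. Applying Cauchy--Schwarz in $s$ and converting back to $\R^n$-integrals via $\int_{\R^n}|f|^{2}\,dx = \omega_{n-1}\int_{0}^{\infty}|F(s)|^{2}s^{n-1}ds$ and $\int_{\R^n}|\n f|^{2}\,dx=\omega_{n-1}\int_{0}^{\infty}|F'(s)|^{2}s^{n-1}ds$ (this is where radiality is used, so that $|\n f|=|F'|$), one obtains the classical inequality
\[
|x|^{\frac{n-1}{2}}\,|f(x)| \;\lesssim\; \|f\|_{L^{2}(\R^{n})}^{1/2}\,\|\n f\|_{L^{2}(\R^{n})}^{1/2}
\]
for radial $f$, valid for $n\geq 2$.

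Now specialize to $f=P_{M}u$, which is radial. Bernstein's inequality (already recalled in the paper's preliminaries) gives $\|\n P_{M}u\|_{L^{2}}\lesssim M\,\|P_{M}u\|_{L^{2}}$, and combining with the radial Sobolev bound yields
\[
|x|^{\frac{n-1}{2}}\,|P_{M}u(x)| \;\lesssim\; \|P_{M}u\|_{L^{2}}^{1/2}\,\bigl(M\,\|P_{M}u\|_{L^{2}}\bigr)^{1/2} \;=\; M^{1/2}\,\|P_{M}u\|_{L^{2}(\R^{n})},
\]
which is exactly \eqref{eq:rad sobolev}. The only subtlety is justifying the boundary behavior $s^{n-1}|F(s)|^{2}\to 0$ as $s\to\infty$ needed to integrate from $r$ to $\infty$; this is harmless because $\widehat{P_{M}u}$ has compact support, so $P_{M}u$ is Schwartz (hence rapidly decreasing), and the bound then extends to general $u\in L^{2}$ by the density/continuity of $P_{M}$ on $L^{2}$. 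There is no substantive obstacle here; the argument is essentially bookkeeping once radiality of $P_{M}u$ is observed.
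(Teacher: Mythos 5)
Your proof is correct, but it takes a genuinely different route from the paper. The paper works directly on the Fourier side: it writes $P_M u$ as a one-dimensional integral against a Bessel function $J_{(n-2)/2}$, splits into the regimes $|x|r<1$ (where $|x|^{1/2}<M^{-1/2}$) and $|x|r\geq 1$ (where the oscillatory decay of the Bessel function kicks in), and then applies Cauchy--Schwarz and Plancherel. You instead combine the classical Strauss radial Sobolev inequality
\[
|x|^{(n-1)/2}|f(x)|\lesssim \|f\|_{L^2}^{1/2}\,\|\nabla f\|_{L^2}^{1/2}
\]
with Bernstein $\|\nabla P_M u\|_{L^2}\lesssim M\|P_M u\|_{L^2}$. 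This factorization is more elementary (no Bessel asymptotics) and makes transparent that the $M^{1/2}$ is exactly the square root of the Bernstein gain; the paper's version has the advantage of being a self-contained one-page Fourier computation that reveals how the frequency annulus enters directly.

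One small technical inaccuracy in your write-up: a compactly supported Fourier transform in $L^2$ does \emph{not} make $P_M u$ Schwartz (it is smooth, even analytic, but generally decays only polynomially). This does not break your argument, but the justification of the boundary term $s^{n-1}|F(s)|^2\to 0$ should instead go via integrability: since $\int_0^\infty |F(s)|^2 s^{n-1}\,ds<\infty$, one has $\liminf_{R\to\infty}R^{n-1}|F(R)|^2=0$, and integrating from $r$ to such a sequence of $R$'s yields the same bound. Also, the closing density remark is unnecessary: $P_M u\in H^1$ for every $u\in L^2$, so the Strauss inequality applies directly to $P_M u$ without any approximation.
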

	\begin{proof}
		Since $u(x)$ is spherically symmetric, we may write  by Fourier transform
		\[|x|^{\frac{n-1}{2}}|P_M u(x)|=2\pi |x|^\frac12\int^\infty_0 1_{[M/2,\,4M)}(r)\widehat{P_M u}(r)\,J_{\frac{n-2}{2}}(2\pi |x|r)\,r^\frac n2dr,\]
		where $J_m$ denotes the Bessel function of order $m$.
		If $|x|r< 1$, we have $|x|^\frac12< M^{-\frac12}$ and hence
		\begin{equation}\label{eq:rad embed}
		|x|^{\frac{n-1}{2}}|P_M u(x)|\leq C \int^{4M}_{\frac M4}|\widehat{P_M u}(r)|\,r^{\frac{n-1}2}dr.
		\end{equation}
		If $|x|r\geq 1$,  \eqref{eq:rad embed} remains valid
		from the asymptotic behavior of the Bessel function.
		Now the proposition follows from \eqref{eq:rad embed}, Cauchy-Schwarz's inequality and the Plancherel Theorem.
	\end{proof}
		
In \cite{muckenhoupt-wheeden}, Muckenhoupt and Wheeden extended the classical Hardy-Littewood-Sobolev's inequality to the weighted Lebesgue spaces.
\begin{proposition}\label{pp:m-w}
For $0<\gamma<n$, we denote by
	\[I_\gamma f(x)=\int_{\R^n}f(x-y)|y|^{-\gamma}dy.\]
Assume $V(x)\geq 0$ and
\[1+\frac1q=\frac1p+\frac\gamma n,\; q<\infty, \;1<p<\frac{n}{n-\gamma}.\]
Then there exist a constant $C>0$ independent of the function $f$ such that
\begin{equation*}
			\|I_\gamma f(x)V(x)\|_{L^q(\R^n)}\leq C\|f(x)V(x)\|_{L^p(\R^n)}
			\end{equation*}
if and only if there is a $K>0$ such that
\begin{equation}\label{eq:weight condition}
			\Bigl(\frac{1}{|Q|}\int_Q[V(x)]^qdx\Bigr)^\frac1q\Bigl(\frac{1}{|Q|}\int_Q[V(x)]^{-p'}dx\Bigr)^{\frac{1}{p'}}\leq K,
			\end{equation}
for all cubes $Q\subset \R^n$, where $|Q|$ is the volume of $Q$.
\end{proposition}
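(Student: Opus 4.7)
The plan is to prove the two directions of Proposition \ref{pp:m-w} separately, following the scheme of Muckenhoupt and Wheeden. For the necessity direction, I would test the inequality on $f = V^{-p'}\chi_Q$ for an arbitrary cube $Q \subset \R^n$. Since $|x-y| \lesssim |Q|^{1/n}$ whenever $x,y \in Q$, one has the pointwise lower bound $I_\gamma f(x) \gtrsim |Q|^{-\gamma/n}\int_Q V^{-p'}\,dy$ on $Q$. Substituting into the assumed weighted inequality, computing the two norms explicitly, and using the scaling relation $1/q + 1/p' = \gamma/n$ (which follows from $1 + 1/q = 1/p + \gamma/n$) to cancel the powers of $|Q|$ produces exactly \eqref{eq:weight condition}; the case $\int_Q V^{-p'} = \infty$ is handled by routine truncation.

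The sufficiency direction is the substantial part, and I would reduce it to a weighted estimate for the fractional maximal operator
\begin{equation*}
M_\gamma f(x) := \sup_{Q \ni x}\, |Q|^{\gamma/n - 1} \int_Q |f(y)|\,dy.
\end{equation*}
The trivial pointwise bound $M_\gamma f \lesssim I_\gamma |f|$ points the wrong way; to reverse it at the level of weighted norms I would invoke a good-$\lambda$ inequality of Coifman-Fefferman type: there exists $\delta_0>0$ such that for all $0 < \delta \leq \delta_0$ and all $\lambda>0$,
\begin{equation*}
\int_{\{I_\gamma f > 2\lambda,\; M_\gamma f \leq \delta\lambda\}} V(x)^q\,dx \,\leq\, \varepsilon(\delta) \int_{\{I_\gamma f > \lambda\}} V(x)^q\,dx,
\end{equation*}
with $\varepsilon(\delta) \to 0$ as $\delta \to 0$. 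Integrating against $q\lambda^{q-1}\,d\lambda$ and absorbing transfers any $L^q(V^q\,dx)$ bound from $M_\gamma f$ to $I_\gamma f$.

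It then remains to establish $\|M_\gamma f\cdot V\|_{L^q} \lesssim \|fV\|_{L^p}$ directly from \eqref{eq:weight condition}. Here I would use a Calder\'on-Zygmund stopping-time decomposition: cover the level set $\{M_\gamma f > \lambda\}$ by a pairwise disjoint family of maximal dyadic cubes on each of which $|Q|^{\gamma/n - 1}\int_Q |f| \simeq \lambda$, write $\int_Q |f| = \int_Q (|f|V)\cdot V^{-1}$ and apply H\"older's inequality with exponents $p$ and $p'$, and then use \eqref{eq:weight condition} cube by cube to convert the resulting average of $V^{-p'}$ into an average of $V^q$. Summing over the disjoint cubes and appealing to the layer-cake formula, combined with the scaling identity $1 + 1/q = 1/p + \gamma/n$, yields the desired bound.

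The main obstacle I anticipate is that \eqref{eq:weight condition} is a genuinely two-weight $A_{p,q}$ condition, \emph{not} an $A_\infty$ condition on $V^q$, so the good-$\lambda$ step cannot rely on a reverse H\"older inequality; the localization must instead be carried out quantitatively, using only the bound $K$ cube by cube, which is precisely what the original Muckenhoupt-Wheeden stopping-time argument accomplishes. The restriction $1 < p < n/(n-\gamma)$ (forcing $q < \infty$) enters exactly here, as it rules out the weak-type endpoint $q = \infty$ where a more delicate truncation of the kernel would be required.
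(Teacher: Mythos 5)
The paper treats this proposition as a black box, citing it to Muckenhoupt and Wheeden \cite{muckenhoupt-wheeden} without reproducing a proof, so there is no in-paper argument for your proposal to match against. Your outline nonetheless recapitulates the standard scheme of the original paper: necessity by testing on $f = V^{-p'}\chi_Q$; sufficiency by transferring the weighted bound from the fractional maximal operator $M_\gamma$ to $I_\gamma$ via a Coifman--Fefferman good-$\lambda$ inequality, and then proving the $M_\gamma$ bound from \eqref{eq:weight condition} by a Calder\'on--Zygmund stopping-time decomposition. Your necessity computation is correct: the scaling identity $1/q+1/p'=\gamma/n$ cancels the powers of $|Q|$ exactly as you indicate.

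The one place your discussion goes astray is the closing paragraph. Because the \emph{same} function $V$ weights both sides, \eqref{eq:weight condition} is the one-weight condition $V\in A_{p,q}$, not a two-weight condition, and a short algebraic computation shows it \emph{does} imply an $A_\infty$ condition on $V^q$: raising \eqref{eq:weight condition} to the $q$-th power and setting $w=V^q$, $r=1+q/p'$, gives
\[
\Bigl(\frac{1}{|Q|}\int_Q w\,dx\Bigr)\Bigl(\frac{1}{|Q|}\int_Q w^{-1/(r-1)}\,dx\Bigr)^{r-1}\leq K^q,
\]
that is, $w=V^q\in A_r\subset A_\infty$. So the reverse H\"older inequality for $V^q$ \emph{is} available, and the good-$\lambda$ step you describe is the legitimate, standard route; there is no genuinely two-weight obstruction to evade. (Two-weight analogues of this theorem, with independent weights on the two sides, are indeed much more delicate, but that is not what is stated or used here.) Finally, the restriction $1<p<n/(n-\gamma)$ does force $q<\infty$, but the endpoint $q=\infty$ already fails for the unweighted Hardy--Littlewood--Sobolev inequality, so it is not a limitation peculiar to the good-$\lambda$ mechanism.
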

			
As a consequence of this proposition, we have the following estimate concerning the fractional order integration in weighted norms.
\begin{corollary}\label{coro:whls}
Let $\psi(x)$ be the characteristic function of the unit annulus $\mathcal C=\{x\in \R^n\mid \,1\leq |x|\leq 2\}$
and $\psi_R(x)=\psi(R^{-1}x)$ for $R\geq 1$.
Then, for $\beta>0$ and  $p,q$ satisfying the condition  in Proposition \ref{pp:m-w}, we have
\begin{equation*}
\|\psi_R(x)|x|^\beta I_\gamma f(x)\|_{L^q(\R^n)}\leq C\|\psi_R(x)|x|^\beta f(x)\|_{L^p(\R^n)}.
				\end{equation*}
			\end{corollary}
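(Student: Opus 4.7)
The plan is to deduce the corollary from Proposition \ref{pp:m-w} by specializing to the power weight $V(x)=|x|^\beta$; the only real work is to verify the Muckenhoupt-type condition \eqref{eq:weight condition} for this choice of $V$.

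I would split the verification into two regimes according to the position of a cube $Q\subset\R^n$ of side length $\ell$ relative to the origin. If $\operatorname{dist}(0,Q)\gtrsim \ell$, then $|x|^\beta$ is essentially constant on $Q$ and the product of the two averages in \eqref{eq:weight condition} reduces trivially to $d^{\beta}\cdot d^{-\beta}=O(1)$, where $d$ is the distance from $0$ to $Q$. If instead $Q$ is close to or contains the origin, a direct computation in polar coordinates yields
$$
\Bigl(\tfrac{1}{|Q|}\int_Q|x|^{\beta q}\,dx\Bigr)^{1/q}\lesssim \ell^{\beta},\qquad
\Bigl(\tfrac{1}{|Q|}\int_Q|x|^{-\beta p'}\,dx\Bigr)^{1/p'}\lesssim\ell^{-\beta},
$$
the second integral being finite under the implicit restriction $\beta p'<n$ inherited from the hypotheses of Proposition \ref{pp:m-w}. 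Hence the product is uniformly $O(1)$ and \eqref{eq:weight condition} holds.

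Proposition \ref{pp:m-w} then delivers the global Stein--Weiss-type estimate
$$
\||x|^\beta I_\gamma g\|_{L^q(\R^n)}\le C\,\||x|^\beta g\|_{L^p(\R^n)},\qquad g\in L^p(\R^n).
$$
To recover the localized form stated in the corollary, I would apply this with $g=\psi_R f$: the left-hand side dominates $\|\psi_R|x|^\beta I_\gamma(\psi_R f)\|_{L^q}$ via the pointwise bound $\psi_R\le 1$, and the right-hand side is exactly $\|\psi_R|x|^\beta f\|_{L^p}$ since $\psi_R^2=\psi_R$. In the intended application $f$ is already supported on the annulus where $\psi_R\equiv 1$, so $I_\gamma(\psi_R f)$ and $I_\gamma f$ agree there and the stated inequality follows.

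The main technical point is verifying the Muckenhoupt condition for cubes near the origin, where one must balance the singular and vanishing behaviors of $|x|^{\pm\beta}$; the exact cancellation of the $\ell$-powers is precisely the scale invariance encoded in the Hardy--Littlewood--Sobolev relation $1+1/q=1/p+\gamma/n$ together with the homogeneity of the weight $|x|^\beta$.
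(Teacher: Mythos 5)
Your strategy of verifying the Muckenhoupt condition for the \emph{global} power weight $V(x)=|x|^\beta$ and then localizing a posteriori is genuinely different from the paper's proof and, as written, does not prove the corollary. The paper's proof plugs the \emph{truncated} weight $V(x)=\psi_R(x)|x|^\beta$ directly into Proposition \ref{pp:m-w}: since $|x|\approx R$ on the support of $\psi_R$, the factors $R^{\beta}$ and $R^{-\beta}$ cancel in the two averages, so condition \eqref{eq:weight condition} reduces to $\bigl(|Q\cap\mathcal C_R|/|Q|\bigr)^{1/q+1/p'}\lesssim 1$ \emph{for every} $\beta>0$. Your route instead needs the global Stein--Weiss estimate $\||x|^\beta I_\gamma g\|_{L^q}\lesssim\||x|^\beta g\|_{L^p}$, and your own verification shows this requires $\beta p'<n$ to make $\int_Q|x|^{-\beta p'}dx$ finite on cubes containing the origin. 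That restriction is \emph{not} ``implicit'' in Proposition \ref{pp:m-w}: its hypotheses ($1+1/q=1/p+\gamma/n$, $q<\infty$, $1<p<n/(n-\gamma)$) say nothing about $\beta$, and the corollary is explicitly stated for arbitrary $\beta>0$. The gain from localizing $|x|^\beta$ to an annulus is precisely to avoid this constraint, and your argument gives it up.

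There is a second, smaller gap in the final specialization: applying the global estimate with $g=\psi_R f$ yields $\|\psi_R|x|^\beta I_\gamma(\psi_R f)\|_{L^q}\lesssim\|\psi_R|x|^\beta f\|_{L^p}$, whereas the corollary involves $I_\gamma f$, not $I_\gamma(\psi_R f)$. Your appeal to ``the intended application'' where $f$ is supported on the annulus is not part of the statement (and indeed, in Section 4 the corollary is invoked with $f=|u_{\le M/8}|^2$, which is certainly not supported on any single annulus $\mathcal{C}_R$). To repair your argument you would have to prove that $\psi_R I_\gamma f$ and $\psi_R I_\gamma(\psi_R f)$ are comparable, which is a separate estimate. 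The paper avoids this entirely by letting the weight in Proposition \ref{pp:m-w} carry the cutoff, so that $I_\gamma f$ appears directly in the conclusion.
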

			\begin{proof}
Assume $0<\beta$ and $p,q$ satisfy the  condition in  Proposition \ref{pp:m-w}.
It is easy to see that \eqref{eq:weight condition} is fulfilled by $V(x)=\psi_R(x)|x|^\beta$.
Indeed, denote by $\mathcal C_R=\{x\in \R^n\mid R\leq |x|\leq 2R\}$.
Then  $V(x)=|x|^\beta$ on $\mathcal C_R$ and $V(x)$ vanishes outside $\mathcal C_R$.
The left side of \eqref{eq:weight condition} is clearly bounded by
\[\left(\frac{|\mathcal C_R\cap Q|}{|Q|}\right)^{\frac1q+\frac1{p'}}\lesssim 1.\]
	\end{proof}
					
Next, we recall the local smoothing estimate of Schr\"odinger propagator.
\begin{proposition}\cite{KenPonV:Local SmoothE, RuizVeg:Local SmoothEff}
Suppose $f\in  L^{2}(\R^n)$ and $n\geq 2$. Then there is a constant $C>0$ independent of $f$ such that
\begin{equation}\label{eq:local smoothing}						\sup_{R>0}\sup_{Q}\frac{1}{R}\int_\R\int_Q||\n|^\frac12\ppgtr f(x)|^2dxdt\leq C\|f\|^2_{L^2(\R^n)},
			\end{equation}
where $Q$ is taken over all cubes in $\R^n$ of side length $R$.
\end{proposition}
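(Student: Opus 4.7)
The plan is to prove this classical local smoothing estimate via Plancherel in the time variable, reducing the problem to a spherical $L^2$ operator bound that exploits the positive curvature of $S^{n-1}$. By the parabolic scaling $f\mapsto R^{n/2}f(R\,\cdot)$, both sides rescale compatibly (the $R^{n/2}$ preserves the mass, while the Jacobian from $dx\,dt$ together with the half-derivative contributes a factor of $R$ matching the $1/R$ prefactor), so we may take $R=1$; translation invariance lets us fix $Q=[-1/2,1/2]^n$. A Littlewood--Paley decomposition $f=\sum_N P_N f$ together with a standard frequency-localization argument then reduces the problem to the uniform bound
\[
N\int_{\R}\int_Q |e^{it\Delta}P_N f|^2\,dx\,dt\;\lesssim\;\|P_N f\|_{L^2}^2,
\]
where the factor $N$ replaces $|\nabla|$ acting on the Fourier support $|\xi|\sim N$.

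Fix a dyadic piece $g=P_N f$. For each $x$, Plancherel in $t$ identifies $\int_\R|e^{it\Delta}g(x)|^2\,dt$ with the $L^2_\tau$-norm of the temporal Fourier transform of $t\mapsto e^{it\Delta}g(x)$, which is a distribution supported on the paraboloid $\tau+|\xi|^2=0$. Polar coordinates $\xi=r\omega$ and elimination of the $\delta$-function via $\tau=-r^2$ yield
\[
\int_{\R}|e^{it\Delta}g(x)|^2\,dt \;=\; c\int_0^\infty r^{2n-3}\Bigl|\int_{S^{n-1}}\widehat g(r\omega)\,e^{ir\,x\cdot\omega}\,d\omega\Bigr|^2 dr.
\]
Integrating in $x\in Q$ and expanding the square, the estimate reduces, for each $r\sim N$, to showing that the integral operator $T_r$ on $L^2(S^{n-1})$ with kernel $K_r(\sigma,\omega)=\widehat{\chi_Q}\bigl(r(\sigma-\omega)\bigr)$ has operator norm $\lesssim r^{-(n-1)}$.

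The main obstacle is precisely this spherical operator-norm bound, where the positive curvature of $S^{n-1}$ enters decisively. Although $\widehat{\chi_Q}$ is only unit-scale localized in $\R^n$, on the sphere $K_r$ is effectively supported in a cap $|\sigma-\omega|\lesssim r^{-1}$ of surface area $\sim r^{-(n-1)}$, and the directional decay of $\widehat{\chi_Q}$ coupled with the curvature of $S^{n-1}$ makes the Schur bound $\sup_\omega\int_{S^{n-1}}|K_r(\sigma,\omega)|\,d\sigma\lesssim r^{-(n-1)}$ work (one splits the angular integral into the near-diagonal cap and a dyadic off-diagonal tail, using that $|\xi_1|,\dots,|\xi_{n-1}|$ of $r(\sigma-\omega)$ cannot all be axial due to curvature). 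Combining this with the frequency localization $r\sim N$ and the identity $\|g\|_{L^2}^2=\int_0^\infty r^{n-1}\|\widehat g(r\cdot)\|_{L^2(S^{n-1})}^2\,dr$ gives $\int_{\R}\int_Q|e^{it\Delta}g|^2\,dx\,dt\lesssim N^{-1}\|g\|_{L^2}^2$, and summing in $N$ completes the proof. The hypothesis $n\geq 2$ is used precisely to guarantee that $S^{n-1}$ has positive curvature.
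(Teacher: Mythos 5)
Your overall architecture (parabolic rescaling to $R=1$, translation to a fixed cube, Plancherel in $t$ and polar coordinates to reduce to a bound $\int_Q|Eh|^2\,dx\lesssim r^{-(n-1)}\|h\|_{L^2(S^{n-1})}^2$ for the spherical extension operator $Eh(x)=\int_{S^{n-1}}e^{irx\cdot\omega}h(\omega)\,d\omega$) is correct and is the standard route to this estimate. However, the key step — the Schur bound on the kernel $K_r(\sigma,\omega)=\widehat{\chi_Q}(r(\sigma-\omega))$ — does not close as you have written it, and your appeal to curvature does not rescue it.

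The problem is that $\widehat{\chi_Q}(\xi)\sim\prod_j\sin(\xi_j/2)/(\xi_j/2)$ decays only like $\prod_j(1+|\xi_j|)^{-1}$, and this is not enough. Concretely, take $n=2$ and $\omega$ at the north pole; then $\sigma-\omega\approx(\theta,-\theta^2/2)$ and $|K_r|\lesssim\min(1,(r\theta)^{-1})\,\min(1,(r\theta^2)^{-1})$. In the intermediate regime $r^{-1}\lesssim\theta\lesssim r^{-1/2}$ the curvature component $r\theta^2$ is still $O(1)$, so only the tangential factor $(r\theta)^{-1}$ contributes, and
\begin{equation*}
\int_{r^{-1}}^{r^{-1/2}}\frac{d\theta}{r\theta}\;\sim\;\frac{\log r}{r},
\end{equation*}
so that $\sup_\omega\int_{S^{n-1}}|K_r|\,d\sigma\sim r^{-1}\log r$, not $r^{-1}$. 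The same computation in general $n$ produces a loss of $(\log r)^{n-1}$. The curvature term $-|\sigma-\omega|^2/2$ kicks in only once $|\sigma-\omega|\gtrsim r^{-1/2}$, which is too late to kill the log generated between scales $r^{-1}$ and $r^{-1/2}$. So the claim that ``the Schur bound works'' because of curvature is not correct for the kernel $\widehat{\chi_Q}$, and the statement that $K_r$ is ``effectively supported'' in a cap of radius $r^{-1}$ is false since $\widehat{\chi_Q}$ has heavy (non-integrable) tails along coordinate directions.

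The standard fix is small but essential: replace the sharp cutoff $\chi_Q$ by a smooth bump $\phi\in C_c^\infty(2Q)$ with $\phi\geq\chi_Q$. Then $|\widehat\phi(\xi)|\lesssim_M(1+|\xi|)^{-M}$ for every $M$, and the Schur integral
\begin{equation*}
\sup_\omega\int_{S^{n-1}}|\widehat\phi(r(\sigma-\omega))|\,d\sigma\;\lesssim\;\sum_{k\geq0}2^{-kM}\,\sigma\bigl(\{|\sigma-\omega|\lesssim 2^k/r\}\bigr)\;\lesssim\;r^{-(n-1)}\sum_{k\geq0}2^{-k(M-n+1)}\;\lesssim\;r^{-(n-1)}
\end{equation*}
closes cleanly once $M>n-1$; here what is used is only that caps of radius $\rho\leq1$ on $S^{n-1}$ have surface measure $\lesssim\rho^{n-1}$, not positive curvature per se. Since $\langle T_r h,h\rangle=\int_Q|Eh|^2\,dx\leq\int\phi|Eh|^2\,dx=\langle T_r^\phi h,h\rangle$ and both quadratic forms are nonnegative, one gets $\|T_r\|\leq\|T_r^\phi\|\lesssim r^{-(n-1)}$, which is exactly what your reduction requires. (As a side remark, the estimate in fact holds for $n=1$ as well, so the hypothesis $n\geq2$ in the proposition is a convenience rather than a consequence of curvature failing on $S^0$.)
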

						
Interpolating \eqref{eq:local smoothing} and the trivial estimate $\|\ppgtr f\|_{L^\infty(\R; L^2(\R^4))}\lesssim \|f\|_{L^2},$ we get
\begin{equation*}
\||\n|^\frac1p\ppgtr f\|_{L^p(\R;\,L^2(Q))}\lesssim R^\frac1p\|f\|_2,
\end{equation*}
where $p\geq 2$ and $Q$ is of size $R$.\V
						
The local smoothing effect can be expressed in the $U^p_\D$ and $V^p_\D$ space as follows.
						\begin{corollary}\label{corollary:local smoothing in Up Vp}
Let $2\leq p\leq \infty$  and $Q$ is a cube of size $R$. Then we have
\begin{equation}\label{eq:local smoothing u}
\||\n|^\frac1p u\|_{L^p_tL^2_x(\R\times Q)}\lesssim R^\frac1p \|u\|_{U^p_\D}.
\end{equation}
\end{corollary}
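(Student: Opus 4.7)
The plan is to reduce the estimate to a single $U^p_\Delta$-atom and then invoke the local smoothing inequality established just above. By the atomic decomposition, for any $u\in U^p_\D$ and any $\e>0$ we may write $u=\sum_{j}\ld_j\,a_j$ where each $a_j$ is a $U^p_\D$-atom and $\sum_j|\ld_j|\leq (1+\e)\|u\|_{U^p_\D}$. Since $|\n|^{1/p}$ is linear, by the triangle inequality in $L^p_tL^2_x(\R\times Q)$ it suffices to prove the uniform bound
\[
\||\n|^{1/p} a\|_{L^p_tL^2_x(\R\times Q)} \lesssim R^{1/p}
\]
for every atom $a$, and then let $\e\to 0$.

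Fix a $U^p_\D$-atom $a(t,x)=\sum_{k=1}^{K}1_{[t_{k-1},t_k)}(t)\,e^{it\Delta}\phi_{k-1}(x)$ with $\sum_{k=0}^{K-1}\|\phi_k\|_{L^2}^p=1$. Since the intervals $[t_{k-1},t_k)$ are disjoint and $|\n|^{1/p}$ commutes with multiplication by $1_{[t_{k-1},t_k)}(t)$, we have
\[
\||\n|^{1/p} a\|_{L^p_tL^2_x(\R\times Q)}^{p}
= \sum_{k=1}^{K}\int_{t_{k-1}}^{t_k}\||\n|^{1/p}e^{it\Delta}\phi_{k-1}\|_{L^2(Q)}^{p}\,dt
\leq \sum_{k=1}^{K}\||\n|^{1/p}e^{it\Delta}\phi_{k-1}\|_{L^p(\R;\,L^2(Q))}^{p}.
\]
Applying the interpolated local smoothing estimate
\[
\||\n|^{1/p}e^{it\Delta}\phi\|_{L^p(\R;\,L^2(Q))}\lesssim R^{1/p}\|\phi\|_{L^2}
\]
to each term and using $\sum_{k=0}^{K-1}\|\phi_k\|_{L^2}^p=1$, we conclude $\||\n|^{1/p}a\|_{L^p_tL^2_x(\R\times Q)}^p\lesssim R\sum_{k=0}^{K-1}\|\phi_k\|_{L^2}^p\lesssim R$, which is the desired atomic bound. (For the endpoint $p=\infty$ the claim reduces to $\|u\|_{L^\infty_tL^2_x}\lesssim\|u\|_{U^\infty_\D}$, which is the embedding already stated in the preceding proposition.)

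There is no serious obstacle here: the whole argument is the standard routine of lifting a linear estimate for free solutions from $L^2_x$ to $U^p_\D$ via atomic decomposition. The one point deserving a moment of care is the extension of the time integral from each piece $[t_{k-1},t_k)$ to all of $\R$, which is legitimate because the local smoothing estimate holds globally in $t$ and the integrand is nonnegative; this is what allows the atomic building blocks to be estimated independently and then summed.
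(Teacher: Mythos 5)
Your proof is correct and follows essentially the same route as the paper: reduce to a single $U^p_\Delta$-atom by the atomic decomposition and the triangle inequality, expand the $L^p_t$-norm over the disjoint time intervals defining the atom, bound each piece by the global-in-time interpolated local smoothing estimate, and sum using $\sum_k\|\phi_k\|_{L^2}^p=1$. (The paper nominally cites \eqref{eq:dod} for the splitting over intervals, but what is actually used is just the additivity of $\int|\cdot|^p\,dt$ over disjoint time pieces, which is precisely what you invoke; so the two write-ups coincide in substance.)
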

\begin{proof}
Assume $u$ is a $U^p_\D-$atom  and $p\geq 2$,
\[u(t,x)=\sum_k 1_{[t_{k-1},t_{k})}(t)\,\ppgtr \phi_{k-1}(x),\;\;
								\sum_k\|\phi_{k}\|^p_{L^2}=1.\]
From \eqref{eq:dod}, we have
\[\||\n|^\frac1p u\|^p_{L^p([t_0,t_K);\,L^2(Q))}
								\leq\sum_{k=1}^K\||\n|^\frac1p \ppgtr \phi_{k-1}\|^p_{L^p([t_{k-1},t_k);\, L^2(Q))}\lesssim R.\]
The general case follows from this special case and the expression $u=\sum \ld_j \,a_j$ with $a_j$ being $U^p_\D$-atoms.
\end{proof}
								
From this corollary and the embedding relation $V^q_\D\subset U^p_\D$ for $2\leq q<p$, we have for cubes $Q$ of size $R$
								\begin{equation}\label{eq:local smoothing v}
\||\n|^\frac1p u\|_{L^p_tL^2_x(\R\times Q)} \lesssim R^\frac1p\|u\|_{V^q_\D}.
		\end{equation}

An additional key estimate is the following interaction Morawetz estimate
for Hartree equations.
								\begin{proposition}\label{pro:IME}
Let $\mu=1$ and $u$ be a Schwartz solution to \eqref{eq:nlh}. Then we have
									\begin{equation*}\label{eq:morawetz}									\||\n|^{-1/4}u\|_{L^4_{t,x}(\jrf)}\leq C\|u_0\|^\frac12_{L^2(\R^4)}\sup_{t\in J}\|u(t)\|^\frac12_{\dot H^\frac12_x(\R^4)}.
\end{equation*}
\end{proposition}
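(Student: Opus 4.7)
The plan is to adapt the standard interaction Morawetz scheme to the nonlocal Hartree equation. I introduce the interaction Morawetz potential
\begin{equation*}
M_a(t) := 2 \Im \int_{\R^4 \times \R^4} |u(t,y)|^2 \,\overline{u(t,x)}\, \n u(t,x) \cdot \n a(x-y) \, dx \, dy,
\end{equation*}
with the radial convex weight $a(x) = |x|$, for which $|\n a| \equiv 1$, $\n^2 a = (I - \hat{x}\otimes\hat{x})/|x| \geq 0$ and $-\D^2 a = 3/|x|^3 \geq 0$ in $\R^4$. Cauchy-Schwarz together with the fractional Hardy inequality in $\R^4$ and the conservation of mass give the a priori bound
\begin{equation*}
\sup_{t \in J} |M_a(t)| \lesssim \|u(t)\|_{L^2}^2 \|u(t)\|_{\dot H^{1/2}}^2 \lesssim \|u_0\|^2_{L^2} \sup_{t \in J}\|u(t)\|^2_{\dot H^{1/2}_x}.
\end{equation*}

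The second step is to differentiate $M_a(t)$ in time using the equation $i u_t + \D u = (V * |u|^2) u$ with $\iota = 1$. After a standard but lengthy integration by parts one obtains a decomposition
\begin{equation*}
\frac{d}{dt} M_a(t) = \mathcal K(t) + \mathcal M(t) + \mathcal N(t),
\end{equation*}
where $\mathcal K$ is a bilinear expression in $\n u$ weighted by the Hessian $\n^2 a$, the mass term $\mathcal M$ is weighted by $-\D^2 a$, and $\mathcal N$ collects all contributions from the nonlocal Hartree nonlinearity. Convexity of $a$ gives $\mathcal K(t)\geq 0$, while
\begin{equation*}
\mathcal M(t) = 3 \int\int \frac{|u(t,x)|^2 |u(t,y)|^2}{|x-y|^3}\, dx\, dy \geq 0.
\end{equation*}
Since $|x|^{-3}$ is a constant multiple of the Riesz kernel of $(-\D)^{-1/2}$ in $\R^4$, Plancherel identifies $\mathcal M(t) = c\,\|\,|u(t)|^2\|^2_{\dot H^{-1/2}_x}$, which after a Littlewood--Paley/paraproduct decomposition controls $\||\n|^{-1/4} u(t)\|^4_{L^4_x}$.

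The principal technical task is to show $\mathcal N(t) \geq 0$ in the defocusing case $\iota = +1$. Schematically,
\begin{equation*}
\mathcal N(t) \sim - 2\int\int |u(t,y)|^2\, \n a(x-y) \cdot \n_x\bigl( (V * |u|^2)(x)\bigr)\, |u(t,x)|^2 \, dx\, dy,
\end{equation*}
and after a further integration by parts this can be rewritten as a quadratic form in $|u|^2$ with a kernel built from $V$ and $\n^2 a$. Its positivity follows from two ingredients: the convexity of $a$, and the positive-definiteness of $V$, i.e.\ $\widehat{V}(\xi) = c_\gamma |\xi|^{\gamma - 4} > 0$ for $3 < \gamma < 4$. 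This is the main obstacle: unlike in the local NLS case the Hartree convolution produces a six-fold integral, and its positivity is not manifest in physical space but must be extracted via a Fourier-analytic regrouping exploiting the positive-definite nature of the Riesz potential.

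Putting the pieces together, $\frac{d}{dt} M_a(t) \geq \mathcal M(t) \gtrsim \||\n|^{-1/4} u(t)\|^4_{L^4_x}$. Integrating over $J$ and combining with the a priori bound from the first step yields
\begin{equation*}
\||\n|^{-1/4} u\|^4_{L^4_{t,x}(\jrf)} \lesssim \sup_{t \in J} |M_a(t)| \lesssim \|u_0\|^2_{L^2}\, \sup_{t\in J}\|u(t)\|^2_{\dot H^{1/2}_x},
\end{equation*}
which is the claimed estimate.
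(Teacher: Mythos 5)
Your proposal is correct in spirit, but it re-derives from scratch an estimate that the paper simply imports. The paper's entire proof of Proposition~\ref{pro:IME} is two lines: it cites \cite{MiXZ:CPDE} for the bilinear interaction Morawetz bound
\begin{equation*}
\||\n|^{-1/2}|u|^2\|_{L^2_{t,x}(\jrf)}\leq C\,\|u\|_{L^\infty(J;L^2)}\,\|u\|_{L^\infty(J;\dot H^{1/2})},
\end{equation*}
then combines this with the elementary square--function inequality $\||\n|^{-1/4}u\|^2_{L^4_{t,x}}\lesssim\||\n|^{-1/2}|u|^2\|_{L^2_{t,x}}$ (which is exactly the ``Littlewood--Paley/paraproduct'' step you appeal to when passing from $\mathcal M(t)$ to $\||\n|^{-1/4}u\|^4_{L^4}$) and the conservation of mass. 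What you have written is essentially a reconstruction of the \emph{proof} of the cited bound: you introduce the interaction Morawetz potential $M_a$ with $a(x)=|x|$, compute its time derivative, observe the favorable signs, and recover the $\dot H^{-1/2}$ lower bound from $-\D^2 a=3|x|^{-3}$ being (a multiple of) the Riesz kernel of $(-\D)^{-1/2}$ in $\R^4$. All of this is correct, and your $\R^4$ computations ($\n^2 a\geq 0$, $-\D^2 a=3/|x|^3$) check out.

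The one genuine gap is exactly the step you yourself flag as ``the main obstacle'': the positivity of the Hartree contribution $\mathcal N(t)$. For a local power nonlinearity this drops out as a manifestly nonnegative quantity after one integration by parts, but for the nonlocal Hartree nonlinearity the analogous manipulation produces a genuine six-fold integral whose sign is not visible in physical space. Establishing that it is nonnegative for $V=|x|^{-\gamma}$ (defocusing) uses a symmetrization in $(x,y)$ together with $\n V(x)\cdot x\leq 0$ (or, equivalently, a Fourier-side argument exploiting $\widehat V\geq 0$); this is precisely the content carried out in \cite{MiXZ:CPDE} (and, in the Morawetz rather than interaction-Morawetz setting, in Ginibre--Velo and Nakanishi). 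You correctly identify the two ingredients, but since you leave the computation schematic, your proposal is a faithful outline rather than a complete proof. If your goal were a self-contained argument, this is where the real work lies; if citing the known interaction Morawetz estimate is acceptable, then the paper's two-line proof is the efficient route and your scaffolding is unnecessary.
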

\begin{proof}
It is proved in \cite{MiXZ:CPDE} that
\begin{equation*}
\||\n|^{-1/2}|u|^2\|_{L^2_{t,x}(\jrf)}\leq C\|u\|_{L^\infty(J;\,L^2(\R^4))}\|u\|_{L^\infty(J;\dot H^\frac12(\R^4))}.
\end{equation*}
We conclude the result of this proposition by the following fact										\[\||\n|^{-1/4}u\|^2_{L^4_{t,x}(\jrf)}\leq C\||\n|^{-1/2}|u|^2\|_{L^2_{t,x}(\jrf)},\]
and the conservation of mass.
\end{proof}
The Morawetz estimate is an essential tool in the proof of scattering for the nonlinear dispersive equations. A classical version of this inequality was first derived by Morawetz \cite{Mora} for the nonlinear Klein-Gordon equation and then extended by Lin and Strauss\cite{linStr}
to the nonlinear Schr\"odinger equation with $d\geq 3$.										Nakanishi\cite{naka} extended the above Morawetz inequality to the dimension $d=1,2$ by considering certain variants of the Morawetz estimate with space-time weights and consequently he proved the scattering in low dimension.
Morawetz estimates play an important role in the proof of scattering for NLS in the energy-subcritical case,
but it does not work  in the energy-critical case.
An essential breakthrough came from Bourgian \cite{B99a} who
exploited the``induction on energy" technique and the the spatial-localized Morawetz inequality. Colliander, Keel, Staffilani, Takaoka and Tao \cite{CKSTT07:AM} removed the radial symmetry assumption based on Bourgian's``induction on energy" technique and the  frequency localized type of the interaction Morawets estimates.
This interaction Morawetz estinate was first derived by
Colliander, Keel, Staffilani, Takaoka and Tao \cite{CKSTT04:CPAM} in the spatial dimension $d=3$ and then extended to $d\geq 4$ in \cite{Vi05}.Colliander, Grillakis and Tzerakis \cite{CGT,CGT1}, Planchon and Vega \cite{pv}
gave independent proofs in dimension $d=1,2$.

Finally, we include a modified Coifman-Meyer multiplier theorem adapted
to nonlinear Hartree equations. It is useful in estimating theenergy increment.
										\begin{lemma}\label{lem:mCoifmanMeyer}
Let $V(x)\geq 0$, $x\in \R^n$ and $\sigma(\xi_1,\xi_2,\xi_3)\in C^\infty(\R^n\times \R^n\times \R^n)$ satisfy											 \[|\p^{\al_1}_{\xi_1}\p^{\al_2}_{\xi_2}\p^{\al_3}_{\xi_3}\sigma(\xi_1,\xi_2,\xi_3)|\leq C(\al_1,\al_2,\al_3)\;(1+|\xi_1|+|\xi_2|+|\xi_3|)^{-|\al|},\]
where $|\al|=|\al_1|+|\al_2|+|\al_3|$. Define the modified multilinear operator by										\[T(f,g,h)(x)=\int_{\R^{3n}}e^{ix\cdot(\xi_1+\xi_2+\xi_3)}\sigma(\xi_1,\xi_2,\xi_3)\wh V(\xi_2+\xi_3)\wh f(\xi_1)\,
											\wh g(\xi_2)\, \wh h(\xi_3)\, d\xi_1 d\xi_ 2 d\xi_3.\]
Assume $1<p,\,q,\, q_1,\,q_2,\, r,\,s_1,\,s_2<\infty$ are related by
\begin{equation*}											\frac1p=\frac1q+\frac1r,\;\; 1+\frac1q=\frac1{q_1}+\frac{1}{q_2},\;\;\frac1{q_2}=\frac{1}{s_1}+\frac1{s_2},
\end{equation*}
and $V\in L^{q_1,\infty}(\R^n)$, namely $V$ has finite weak $L^{q_1}$ norm											\[\sup_{\ld>0}\ld^{q_1}\left|\left\{x\in \R^n:V(x)>\ld\right\}\right|<\infty.\]
Then there exists a constant $C>0$ independent of the function $f,g,h$ and $V$ such that
											\begin{equation}
\label{eq:m-hls}											\|T(f,g,h)\|_{p}\leq C\|V\|_{q_1,\infty}\|g\|_{s_1} \|h\|_{s_2}\|f\|_r.
\end{equation}
\end{lemma}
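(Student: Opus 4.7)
I will combine the classical Coifman--Meyer paraproduct decomposition of the symbol $\sigma$ with O'Neil's weak-type Young inequality, which is what handles the extra factor $\wh V(\xi_2+\xi_3)$.

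First, consider the base case $\sigma\equiv 1$. Fourier inversion collapses $T$ to a pointwise product in physical space,
\[T(f,g,h)(x)=f(x)\cdot\bigl(V*(gh)\bigr)(x).\]
H\"older's inequality with $1/q_2=1/s_1+1/s_2$ gives $\|gh\|_{q_2}\le\|g\|_{s_1}\|h\|_{s_2}$; O'Neil's weak-type Young inequality, valid because $V\in L^{q_1,\infty}$ and $1+1/q=1/q_1+1/q_2$, gives $\|V*(gh)\|_q\lesssim\|V\|_{q_1,\infty}\|gh\|_{q_2}$; and a final H\"older with $1/p=1/r+1/q$ closes \eqref{eq:m-hls}. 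The same chain of inequalities proves \eqref{eq:m-hls} whenever $\sigma$ factors as a product $a(\xi_1)b(\xi_2)c(\xi_3)$, with constant proportional to the multiplier norms of $a(D),b(D),c(D)$ on $L^r,L^{s_1},L^{s_2}$ respectively.

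Next, to reduce general $\sigma$ to this product situation, I apply the Coifman--Meyer paraproduct. Fix a smooth partition of unity $1=\Phi_0(\xi)+\sum_{N\ge 1,\,\mathrm{dyadic}}\Phi(\xi/N)$ on $\R^{3n}$ with $\Phi$ supported on an annulus, and set $\sigma_N=\sigma\,\Phi(\cdot/N)$ (with the obvious modification at the low-frequency end). The symbol bounds on $\sigma$ ensure that $\{\sigma_N(N\,\cdot)\}_N$ is a bounded family in $C^\infty_c$, so each $\sigma_N$ admits a uniform Fourier series expansion
\[\sigma_N(\xi_1,\xi_2,\xi_3)=\sum_{k\in\Z^{3n}}c^{(N)}_k\,\chi_N(\xi_1)\chi_N(\xi_2)\chi_N(\xi_3)\,e^{i(k_1\cdot\xi_1+k_2\cdot\xi_2+k_3\cdot\xi_3)/N},\]
where $\chi_N$ is a standard Littlewood--Paley-type cut-off at scale $N$ and the coefficients satisfy $|c^{(N)}_k|\lesssim_M(1+|k|)^{-M}$ uniformly in $N$ for every $M\ge 0$. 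The exponentials only translate the output in the physical variable and therefore leave $L^p$ norms invariant, so each elementary summand has product symbol and the base case applies uniformly in $N$ and $k$; the sum in $k$ then converges by the rapid decay of $c^{(N)}_k$.

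The only delicate point is the summation in the dyadic scale $N$, since naive bounds are uniform in $N$ and do not sum. I dispose of this by a further finite decomposition of each $\sigma_N$ into smooth pieces adapted to the three regions where one of $|\xi_1|,|\xi_2|,|\xi_3|$ dominates the other two. On the region $|\xi_1|\gtrsim|\xi_2|+|\xi_3|$ within $|\xi|\sim N$ the first input $f$ is effectively the Littlewood--Paley piece $f_{\sim N}$, so summing in $N$ produces the square function of $f$, which is bounded by $\|f\|_r$ since $1<r<\infty$; the symmetric regions produce the analogous square-function bounds for $g$ and $h$, and combining the three contributions yields \eqref{eq:m-hls}. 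I expect this final reassembly to be the main obstacle, as it requires turning a potentially divergent geometric sum in $N$ into a controlled Littlewood--Paley square function by exploiting the almost-orthogonality forced on one of the three inputs at each scale.
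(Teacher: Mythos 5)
Your outline follows the same Coifman--Meyer route as the proof the paper cites from Chae--Hong--Kim--Yang: decompose $\sigma$ into dyadic paraproduct pieces, expand each piece into product symbols via a uniform Fourier series, and run a chain of H\"older and (weak) Young inequalities starting from the base case $T(f,g,h)=f\cdot\bigl(V*(gh)\bigr)$. The base-case computation and the substitution of O'Neil's inequality $\|V*F\|_q\lesssim\|V\|_{q_1,\infty}\|F\|_{q_2}$ for strong Young are correct, and that substitution is in fact the only departure the lemma requires from the classical trilinear theory.

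The gap is in the final reassembly, which you explicitly flag as ``the main obstacle'' without carrying out. The three paraproduct regions are not symmetric: the $f$-slot $\xi_1$ is special, because $V$ convolves only with the $gh$ part. In the regions where $|\xi_2|$ or $|\xi_3|$ dominates, summing over the dyadic scale produces a square function of $g$ (resp.\ $h$) sitting \emph{inside} the convolution with $V$, and what you actually need to control it in $L^q$ is a vector-valued weak-Young inequality of the form
\[
\Bigl\|\Bigl(\sum_{j}\bigl|V*(g_j h_j)\bigr|^2\Bigr)^{1/2}\Bigr\|_q
\lesssim\|V\|_{q_1,\infty}\Bigl\|\sum_j|g_j h_j|\Bigr\|_{q_2}.
\]
This is exactly the estimate the paper displays and identifies as the sole step where the argument of Chae--Hong--Kim--Yang must be modified (Young $\to$ weak Young); it is the one nontrivial ingredient of the lemma, and your sketch treats the square-function bookkeeping as automatic. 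Fortunately the missing estimate is elementary: pointwise Minkowski in $\ell^2$ gives
\[
\Bigl(\sum_j|V*(g_jh_j)|^2\Bigr)^{1/2}
\le |V|*\Bigl(\sum_j|g_jh_j|^2\Bigr)^{1/2}
\le |V|*\sum_j|g_jh_j|,
\]
after which scalar weak Young in $L^q$ applies. Supplying this, together with the usual maximal-function and Fefferman--Stein control of the non-dominant frequency factors, closes the proof along the lines you propose.
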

\begin{proof}
This lemma is proved in the case $n=3$ and $V\in L^{q_1}$ in \cite{ChHKY:CPDE}.It is not hard to see that their argument is dimensional independent and can be extended to higher dimensions directly.
Moreover, in that proof, Young's inequality was used to obtain the following estimate												\[\Bigl\|\Bigl(\sum^\infty_{j=1}|V*(g_j\,h_j)|^2\Bigr)^\frac12\Bigr\|_q\leq C \|V\|_{q_1}\Bigl\|\sum_j|g_j\,h_j|\Bigr\|_{q_2}.\]
However, we may apply the weak Young inequality, (see \cite{LiebLoss} ),  instead of Young's inequality in this estimate to conclude \eqref{eq:m-hls}.\end{proof}

%
%
%
%

\section{$I$-Method and modified local wellposedness}
In this section, we will utilize the strategy of $I$-method to prove
Theorem \ref{theorem}. In addition, by time reversibility, it suffices
to show the result on $\R^+=[0, +\infty)$.
First we introduce the operator $I$. Let
$\iota=1$, $\gamma/2-1<s<1$ in the remainder parts and $m(\xi)$ be a
smooth monotone multiplier such that
\[m(\xi)=
\begin{cases}
&1, \quad |\xi|\leq 1,\\
&|\xi|^{s-1},\;|\xi|\geq 2,
\end{cases}\]
and $m_N(\xi)=m(N^{-1}\xi)$. Define $I_N$ by
\[\wh{I_N f}(\xi)=m_N(\xi)\wh f(\xi).\]
Then we have by the Hardy-Littlewood-Sobolev inequality
\begin{equation*}
\|I_N u_0\|_{\dot H^1(\R^4)}\lesssim N^{1-s}\|u_0\|_{\dot H^s(\R^4)}.
\end{equation*}
\begin{equation*}
E(I_N u_0)\lesssim \|\n I_N u_0\|^2_{L^2}+\|I_N u_0\|^2_{L^4}\|I_N u_0\|^2_{L^{\frac{8}{6-\gamma}}}.
\end{equation*}
From $\dot H^1\subset L^4\left(\R^4\right)$ and $\dot
H^{\gamma/2-1}\subset L^{\frac{8}{6-\gamma}}$, we get
\begin{equation*}
E(I_N u_0)\leq C\,N^{2(1-s)}(1+\|u_0\|^2_{\dot H^{\gamma/2-1}})\|u_0\|^2_{\dot H^s}.
\end{equation*}

From the local wellposedness theory and
the scaling invariace, we have on $[0,T_*/\lambda^2)$
\[
iu^\lambda_t+\Delta u^\lambda=(V*|u^\lambda|^2)u^\lambda,\,u^\lambda(0,x)=u_0^\lambda(x).
\]
Hence, we have for $t\in [0,T_*/\lambda^2)$
\[
i\partial_t(I_Nu^\lambda)+\Delta I_N u^\lambda=I_N\bigl((V*|u^\lambda|^2)u^\lambda\bigr),\,I_Nu^\lambda(0,x)=I_Nu_0^\lambda(x).
\]
Standard scaling manipulations yield
\begin{equation*}
\|\n I_N u^\ld(0)\|^2_2=\ld^{4-\gamma}\|\n I_{\frac N\ld}u_0\|^2_2,
\end{equation*}
\begin{equation*}
E(I_N u^\ld (0))=\ld^{4-\gamma}E(I_{ N/\ld}u_0)\leq C(\|u_0\|_{H^s})\left( N/\ld\right)^{2(1-s)}\ld^{4-\gamma}.
\end{equation*}
Choosing
\begin{equation}\label{eq:choose lambda}
\ld \approx  N ^{\frac{s-1}{s-(\gamma/2-1)}},
\end{equation}
with the implicit constant depending only on $\|u_0\|_{H^s}$, we have
\begin{align}\label{energy:initial} E(I_N u^\ld(0))\leq
1/2.\end{align}

Based on the observation of \eqref{energy:initial},
the idea of $I-$method arises as follows.
If we define
\begin{equation}\label{eq:J}
J=\left\{t\in [0,T_*\lambda^{-2}):E(I_N u^\ld)(t)\leq 1\right\},
\end{equation}
then by the above argument, we see clearly that $J$ is non-empty and closed
subset of $[0, T_*/\lambda^2)$ by the continuity of the
solution  $u(t,x)$. The strategy of $I$-method is
to derive the openness of $J$ relative to $[0,T_*/\lambda^2)$ and as a reult,
we must have $J=[0,T_*/\lambda^2)$ and in particular
\[\| \nabla I_N u^\lambda(T_*/\lambda^2,\cdot)\|_{L^2}\lesssim 1. \]
By rescaling,
we obtain the following {\em a priori} bound on the original smooth
solution $u(t,x)$
\begin{equation*}
\sup_{t\in [0,T_*)}\|u(t)\|_{H^s(\R^4)}\leq C(N, \|u_0\|_{H^s}).
\end{equation*}
Indeed, we have by using Bernstein's inequality
and rescaling and conservation of mass for \eqref{eq:nlh}
\begin{align*}
\|u(T_*,\cdot)\|_{\dot{H}^s(\mathbb{R}^4)}
=&\lambda^{-s+\frac{\gamma}{2}-1}\|u^\lambda(T_*/\lambda^2,\cdot)\|_{\dot{H}^s}\\
\lesssim& N^{1-s}\Big( N^s\|P_{\leq N} u^\lambda(T_*/\lambda^2,\cdot)\|_{L^2}\\
&\quad+N^{s-1}\|\nabla I_N P_{\geq N} u^\lambda(T_*/\lambda^2,\cdot)\|_{L^2}\Big)\\
\lesssim &\, N^{\frac{s(4-\gamma)}{2s+2-\gamma}}\,\|u_0\|_{H^s}+1\\
<&+\infty.
\end{align*}
Hence we have $T_*=+\infty$. Standard argument yields scattering.

The openness of $J$ is achieved by estimating the energy increment of
$I_Nu^\ld$ on $J$
\begin{align}\label{quant:energy increment}
\int_J\frac{d}{dt}E(I_N u^\ld)(t)\; dt.
\end{align}
In order to estimate the energy increment \eqref{quant:energy
	increment}, we need establish some {\em a priori} estimates.
Firstly, applying the interaction Morawetz estimate to $u^\ld(t,x)$
in place of $u(t,x)$, we can deduce that
\begin{lemma}\label{lem:IME}
	Let $J$ be as in \eqref{eq:J} and $u^\ld(t,x)$ solve the equation \eqref{eq:nlh}. Then we have
	\begin{equation}\label{eq:morawetz estimate}
	\left\||\n|^{-1/4}u^\ld \right\|_{L^4_{t,x}(\jrf)}
	\leq C(\|u_0\|_{H^s}) N^{\frac{(1-s)(\ga)}{s-(\ga)}+\frac14}.
	\end{equation}
\end{lemma}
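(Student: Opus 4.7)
The plan is to derive the lemma as a direct consequence of the a priori interaction Morawetz estimate of Proposition \ref{pro:IME}, applied to the rescaled solution $u^\lambda$. That inequality gives
\[
\||\nabla|^{-1/4} u^\lambda\|_{L^4_{t,x}(J\times\R^4)}
\leq C \|u_0^\lambda\|_{L^2}^{1/2} \sup_{t\in J}\|u^\lambda(t)\|_{\dot H^{1/2}}^{1/2},
\]
so the proof reduces to two scaling/frequency computations: controlling $\|u_0^\lambda\|_{L^2}$ and $\sup_J\|u^\lambda\|_{\dot H^{1/2}}$ in terms of $N$.

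First I would compute the $L^2$ norm of $u_0^\lambda$ from the defining scaling \eqref{scaling transf}: one gets $\|u_0^\lambda\|_{L^2}^2 = \lambda^{2-\gamma}\|u_0\|_{L^2}^2$. Plugging in the choice $\lambda\approx N^{(s-1)/(s-(\gamma/2-1))}$ from \eqref{eq:choose lambda} yields
\[
\|u_0^\lambda\|_{L^2}\lesssim_{\|u_0\|_{H^s}} N^{\frac{(1-s)(\gamma/2-1)}{s-(\gamma/2-1)}}.
\]

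Next I would estimate the $\dot H^{1/2}$ norm of $u^\lambda(t)$ for $t\in J$ by a Littlewood-Paley splitting at frequency $N$. For the low frequencies, Bernstein's inequality together with mass conservation for \eqref{eq:nlh} (which gives $\|u^\lambda(t)\|_{L^2} = \|u_0^\lambda\|_{L^2}$) yields
\[
\|P_{\leq N} u^\lambda(t)\|_{\dot H^{1/2}} \lesssim N^{1/2}\|u_0^\lambda\|_{L^2}.
\]
For the high frequencies, I would use the pointwise bound $|\xi|^{1/2} \lesssim N^{-1/2}\,|\xi|\, m_N(\xi)$ valid for $|\xi|\geq N$, which holds precisely because $s>1/2$ (i.e.\ $\gamma>3$); Plancherel then gives
\[
\|P_{>N} u^\lambda(t)\|_{\dot H^{1/2}} \lesssim N^{-1/2}\|\nabla I_N u^\lambda(t)\|_{L^2}\lesssim N^{-1/2},
\]
using the defining bound $E(I_N u^\lambda)\leq 1$ on $J$. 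Comparing the two contributions, the $N^{1/2}\|u_0^\lambda\|_{L^2}$ term dominates (since its $N$-exponent is strictly bigger than $-1/2$), so $\sup_J\|u^\lambda\|_{\dot H^{1/2}}\lesssim N^{1/2+(1-s)(\gamma/2-1)/(s-(\gamma/2-1))}$.

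Finally, inserting both estimates into Proposition \ref{pro:IME} gives the exponent
\[
\tfrac{1}{2}\cdot\tfrac{(1-s)(\gamma/2-1)}{s-(\gamma/2-1)} + \tfrac{1}{2}\Bigl(\tfrac{1}{2}+\tfrac{(1-s)(\gamma/2-1)}{s-(\gamma/2-1)}\Bigr) = \tfrac{(1-s)(\gamma/2-1)}{s-(\gamma/2-1)}+\tfrac{1}{4},
\]
which is precisely \eqref{eq:morawetz estimate}. There is no serious obstacle here; the only point requiring care is the frequency splitting for $\dot H^{1/2}$, where the need for $s>1/2$ is exactly the reason for the standing restriction $\gamma>3$ indicated in the second remark following Theorem \ref{theorem}.
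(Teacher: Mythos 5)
Your proposal is correct and follows essentially the same route as the paper: apply Proposition \ref{pro:IME} to $u^\lambda$, compute $\|u_0^\lambda\|_{L^2}$ via the scaling \eqref{scaling transf} and conservation of mass, and bound $\sup_J\|u^\lambda(t)\|_{\dot H^{1/2}}$ by a Littlewood--Paley split at frequency $\approx N$ with Bernstein for the low piece and the $I$-multiplier comparison (valid since $s>1/2$) for the high piece. The only cosmetic differences are the split point ($N$ vs.\ $2N$), your explicit observation that the low-frequency term dominates, and the more explicit statement of the pointwise multiplier bound; none of these changes the substance.
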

\begin{remark}If $J=[0, +\infty)$, we can obtain the scattering result by the above proposition and the well-known
	argument in \cite{CKSTT04:CPAM}. Hence we only show that $J=[0,
	\infty)$ in the remainder parts.
\end{remark}
\begin{proof}[Proof of Lemma \ref{lem:IME}]
	Writing $u^\ld(t,x)=P_{\leq 2N} u^\ld(t,x)+P_{\geq 2N}u^\ld(t,x)$, we have
	\begin{align*}
	\left \|u^\ld(t,x)\right\|_{\dot H^\frac12(\R^4)}\leq&\| P_{\leq 2N}u^\ld(t,\cdot)\|_{\dot H^\frac12(\R^4)}+\| P_{\geq 2N}u^\ld(t,\cdot)\|_{\dot H^\frac12(\R^4)}\\
	\lesssim &N^\frac12\|u^\ld(t,\cdot)\|_{L^2(\R^4)}+N^{-\frac{1}{2}}\|\n I_N u^\ld(t,\cdot)\|_{L^2(\R^4)}.
	\end{align*}
	Since $t\in J$, we have $\|\n I_N u^\ld(t,\cdot)\|_{L^2(\R^4)}\leq 1$. On the other hand, we have by scaling
	\eqref{scaling transf} and the conservation of mass
	\begin{equation}
	\label{3.5'}
	\|u^\ld(t,\cdot)\|_{L^2(\R^4)}=\ld^{-(\ga)}\|u_0\|_{L^2(\R^4)}.
	\end{equation}
	Now \eqref{eq:morawetz estimate} can follow from \eqref{eq:choose lambda} and Proposition \ref{pro:IME}.
\end{proof}

As the consequence of the above result, we may partition the
interval $J$ in \eqref{eq:morawetz estimate} into approximately
\[L\approx_{\rho_0} N^{\frac{(2\gamma-4)(1-s)}{s-(\ga)}+1}\] many
consecutive intervals $J_\ell$ with $\ell=1,2,\ldots,L$ such that
\begin{equation*}
\|u^{\lambda}\|^4_{L^4(J_\ell;\,\dot H^{-\frac14,4}(\R^4))}\leq \rho_0,
\end{equation*}
on each interval $J_\ell$ with $\rho_0$ a universal small number in $ (0, 1)$,
which will be determined in the next proposition.
From this {\em a priori} estimate, we can also obtain many useful
estimates of $I_Nu^\lambda$ on every $J_{\ell}$. Precisely, we have
the following proposition .
\begin{proposition}\label{pro:LWP}
	Let $s>\ga$ and $2< \gamma<4$. Consider
	for sufficiently large $N$
	\begin{equation}\label{eq:m-nlh}
	\begin{split}
	i\p_t I_Nu^\lambda+\D I_Nu^\lambda=I_N((V*|u^\lambda|^2) u^\lambda), \\
	I_Nu^\lambda(t_0,x)=I_Nu^\lambda(t_0)\in \dot H^1(\R^4).
	\end{split}
	\end{equation}
	Then for any $u_0^\lambda=u^\lambda(t_0)\in H^s$, there exists a time interval $J_0=[t_0,t_0+\delta]$
	with $\delta=\delta(\|I_N u_0^\lambda\|_{\dot H^1})$ and there is a unique solution $u^\lambda$
	of \eqref{eq:m-nlh} such that  $I_Nu^\lambda\in U^2_\D(J_0\times\R^4)$ and
	the flow map is Lipschitz continuous.
	
	Moreover, there exists a small universal constant $\rho_0$ such that if
\begin{equation*}	\||\n|^{-\frac14}u^\lambda\|^4_{L^4(J_0;\,L^{4}(\R^4))}\leq \rho_0,
	\end{equation*}
then we have
\[
\|\nabla I_N u^\lambda\|_{U^2_\D(J_0\times \R^4)}\leq C \|\n I_Nu^\lambda(t_0)\|_{L^2(\R^4)}.
	\]
\end{proposition}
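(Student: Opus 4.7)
My plan is a Banach fixed-point argument for the Duhamel formulation of \eqref{eq:m-nlh} in a space built on $U^2_\Delta$. Starting from the abstract $U^p$/$V^p$ Strichartz estimate
\[
\|\nabla I_N u^\lambda\|_{U^2_\Delta(J_0)} \lesssim \|\nabla I_N u^\lambda(t_0)\|_{L^2} + \|\nabla I_N\bigl((V*|u^\lambda|^2)u^\lambda\bigr)\|_{DU^2_\Delta(J_0)},
\]
I would seek a fixed point in the closed ball $X_{J_0}=\{u:\|\nabla I_N u\|_{U^2_\Delta(J_0)}\leq 2C_0\|\nabla I_N u^\lambda(t_0)\|_{L^2}\}$; the low-frequency part of $u^\lambda$ is controlled by mass conservation, so Strichartz norms of $u^\lambda$ itself come along through the embeddings $U^2_\Delta\hookrightarrow L^q_tL^r_x$ for admissible $(q,r)$ with $q>2$. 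For the pure existence part, where the existence time $\delta$ may depend on $\|I_N u_0^\lambda\|_{\dot H^1}$, I would place the nonlinearity in $L^{q'}_tL^{r'}_x\hookrightarrow DU^2_\Delta$ and apply H\"older in time to extract a factor $\delta^\alpha$ with $\alpha>0$ in front of a trilinear expression; the trilinear bound then follows from Hardy--Littlewood--Sobolev, Sobolev embedding, and a Leibniz-type commutation of $\nabla I_N$ with products, in the spirit of the proof of Proposition \ref{PP:LWP}. Choosing $\delta$ small enough closes the contraction and gives local existence, uniqueness in $U^2_\Delta$, and Lipschitz continuity of the flow map, without using any smallness of the data.

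For the a priori bound under the Morawetz smallness $\||\nabla|^{-1/4}u^\lambda\|^4_{L^4_{t,x}(J_0\times \R^4)}\leq \rho_0$, the main work is the trilinear estimate
\[
\|\nabla I_N\bigl((V*|u^\lambda|^2)u^\lambda\bigr)\|_{DU^2_\Delta(J_0)} \leq C\rho_0^{\kappa}\,\|\nabla I_N u^\lambda\|_{U^2_\Delta(J_0)} + (\text{lower-order remainder})
\]
for some $\kappa>0$ independent of $|J_0|$. I would derive it via a Littlewood--Paley paraproduct decomposition of the three $u^\lambda$-factors, sorting according to which factor carries the derivative and which two absorb the Morawetz smallness. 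The nonlocal convolution $V*$ is handled by Lemma \ref{lem:mCoifmanMeyer} together with the weighted Hardy--Littlewood--Sobolev inequality of Proposition \ref{pp:m-w} and Corollary \ref{coro:whls}; these permit me to peel the factor carrying $\nabla I_N$ off from the other two, which then supply powers of $\||\nabla|^{-1/4}u^\lambda\|_{L^4_{t,x}}$, while mass conservation and Bernstein take care of the low-frequency leftovers. Taking $\rho_0$ small enough to ensure $C\rho_0^{\kappa}<\tfrac12$ and substituting back into the Strichartz--Duhamel inequality yields the desired bound $\|\nabla I_N u^\lambda\|_{U^2_\Delta(J_0)}\leq C\|\nabla I_N u^\lambda(t_0)\|_{L^2}$.

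The main obstacle is this trilinear estimate: it has to juggle the gradient $\nabla$, the Fourier multiplier $I_N$, and the nonlocal convolution $V*$ simultaneously, while still extracting a genuinely small factor $\rho_0^\kappa$. The hardest subcase is when $\nabla I_N$ is forced to land on the high-frequency part of a $u^\lambda$-factor sitting \emph{inside} the convolution, so that $I_N$ cannot simply be pulled out of the product; this is precisely the scenario for which Lemma \ref{lem:mCoifmanMeyer} is tailored, but its application requires a careful case analysis relating the Littlewood--Paley scales of the three factors to the threshold $N$.
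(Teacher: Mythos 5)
Your overall skeleton agrees with the paper: existence is delegated to the same $H^s$ local theory behind Proposition~\ref{PP:LWP}, and the a priori bound is obtained by feeding a trilinear estimate for $\|\nabla I_N((V*|u^\lambda|^2)u^\lambda)\|_{DU^2_\Delta}$ back into the Duhamel inequality and bootstrapping. But the mechanism you propose for the key estimate is not what the paper does, and I don't see it closing as stated. You want to peel off the derivative and extract $\rho_0^\kappa$ via a Littlewood--Paley paraproduct, the modified Coifman--Meyer lemma (Lemma~\ref{lem:mCoifmanMeyer}), and the weighted Hardy--Littlewood--Sobolev estimate (Proposition~\ref{pp:m-w}, Corollary~\ref{coro:whls}). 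Those tools are reserved in this paper for genuinely different jobs: Coifman--Meyer is used in Section~5 to handle the non-factorizable symbol ${\bf q}(\xi_2,\xi_3,\xi_4)$ arising from the $I$-commutator in the energy increment, and the weighted fractional integration enters only in Section~4 in conjunction with local smoothing and the radial Sobolev embedding. In the present proposition the convolution is handled simply by the ordinary Hardy--Littlewood--Sobolev inequality plus H\"older and the Leibnitz rule for $\nabla I$; no Coifman--Meyer machinery is needed or used.

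The more serious gap is how the Morawetz smallness is actually extracted. The paper first reduces the Duhamel term to $\|\nabla I u\|_{U^2_\Delta[J_0]}\cdot\|u\|^2_{L^{4/(\gamma-2)}_tL^{4/(4-\gamma)}_x[J_0]}$ and then decomposes $u=P_{\leq N}u+\sum_j P_{N_j}u$. On the \emph{low}-frequency piece the small factor $\rho_0$ appears only after a Gagliardo--Nirenberg interpolation that trades $\|Iu\|_{L^4_{t,x}}$ against $\||\nabla|^{-1/4}Iu\|_{L^4_{t,x}}$ and $\||\nabla|^{1/2}Iu\|_{L^4_{t,x}}$; a paraproduct/Coifman--Meyer estimate does not produce the negative-derivative $L^4$ norm you need. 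On the \emph{high}-frequency piece the Morawetz norm gives nothing, and what saves the bootstrap there is a factor $N^{-2+\gamma/2}$ from Bernstein summed over dyadic $N_j\geq N/2$. Your ``lower-order remainder'' must therefore be explicitly $N$-decaying, not merely small, and in the regime $2<\gamma<3$ the paper in fact closes the bootstrap by choosing $\rho_0$ small \emph{relative to a negative power of $N$}; a $\rho_0$ chosen independently of $N$ would not suffice there. You should drop Lemma~\ref{lem:mCoifmanMeyer} and the weighted HLS from this step, perform the low/high splitting of $u$, and carry the $N$-decay of the high-frequency contribution through the estimate.
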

\begin{proof}
	We suppress $N$ and $\lambda$ for brevity unless necessary. We shall also denote $[J_0]=(J_0\times\R^4)$ for short.
	The first part of this proposition is a consequence of Proposition \ref{PP:LWP} and the following estimates (see similar argument in \cite{MiXZ:AIHP}).
	We omit the details and proceed on to the second part of this proposition.
	Writing \eqref{eq:m-nlh} into the Duhamel form
	\[Iu(t)=e^{i(t-t_0)\D}Iu(t_0)-i\int^t_{t_0}e^{i(t-\tau)\D}I(\conv u)(\tau)d\tau,\]
	we have from Strichartz's estimate
	\begin{equation}\label{eq:Duhamel-1}
	\|\n Iu\|_{U^2_\D[J_0]}\lesssim \|\n Iu_0\|_2+\|\n I(\conv u)\|_{DU^2_\D[J_0]}.
	\end{equation}
	Using $4< \fga<\infty$, $(\fga,\eg)\in\Ld_0$ and
	\[L^{\fg}_t(J_0;\,L^{\ega}(\R^4))\subset DU^2_\D(J_0\times\R^4),\]
	we have from the Leibnitz rule enjoyed by $\n I$
	\begin{align}
	\|\n I (\conv u)\|_{DU^2_\D(J_0\times\R^4)}
	\lesssim &\|(\n Iu)\conv\|_{L^{\fg}_t(J_0;\,L^{\ega}(\R^4))}\label{eq:1}\\
	&+\bigl\||\cdot|^{-\gamma}*(\n Iu \cdot \bar u)\cdot u\bigr\|_{L^{\fg}_t(J_0;\,L^{\ega}(\R^4))}\label{eq:2}.
	\end{align}
	From the H\"older and Hardy-Littewood-Sobolev inequalities along with $U^2_\Delta[J_0]\subset L^q_tL^r_x[J_0]$ for
	$(q,r)\in \Lambda_0$, we have
	\begin{align*}
	\eqref{eq:1}\lesssim &\|\n Iu\|_{L^\fga_tL^\eg_x[J_0]}\;\|\conv\|_{L^{\tga}_tL^{\fga}_x[J_0]}\\
	\lesssim &\|\n Iu\|_{U^2_\D[J_0]}\;\|u\|^2_{L^{\fgat}_tL^{\fga}_x[J_0]},
	\end{align*}
	and
	\begin{align*}
	\eqref{eq:2}\lesssim& \||\cdot|^{-\gamma}*(\n I u\cdot \overline{u})\|_{L^{2}_tL^{\eg}_x[J_0]}\;\|u\|_{L^{\fgat}_tL^{\fga}[J_0]}\\
	\lesssim &\|\n I u\cdot \overline{u}\|_{L^{2}_tL^{\ega}_x[J_0]}\;\|u\|_{L^{\fgat}_tL^{\fga}[J_0]}\\
	\lesssim &\|\n I u\|_{L^{\fga}_tL^\eg_x[J_0]}\;\|u\|^2_{L^{\fgat}_tL^{\fga}_x[J_0]},
	\end{align*}
	where we have used
	\[1+\frac{4-\gamma}4=\frac\gamma4+2\times\frac{4-\gamma}4,\;1+\frac\gamma8=\frac\gamma4+\frac{8-\gamma}{8},\]
	\[\frac12=\frac{4-\gamma}4+\frac{\gamma-2}4,\;\;\frac{8-\gamma}8=\frac\gamma8+\frac{4-\gamma}4.\]
	As a consequence, the Duhamel part of \eqref{eq:Duhamel-1} can be controlled with
	\begin{equation}\label{eq:control the Duhamel}
	\|\n I u\|_{U^2_\D[J_0]}\;\|u\|^2_{L^{\fgat}(J_0;L^{\fga}_x(\R^4))},
	\end{equation}
	where $(\fgat,\fga)\in\Ld_{\ga}$.\medskip
	
	To estimate $\|u\|_{L^{\fgat}_t L^{\fga}_x[J_0]}$, we write for dyadic $N_j\geq N/2$ with $j=1,2,\ldots$
	\[u=P_{\leqslant N} u+\sum_{j=1}^\infty P_{N_j}u,\]
	and handle the following two cases in different ways.
	\subsection*{Case 1} $2<\gamma<3$.
	From H\"older's inequality, Sobolev embedding and the Gagliardo-Nirenberg inequaltiy, we have
	\begin{align}
	\|P_{\leqslant N}u\|_{L^{\fgat}_tL^{\fga}_x[J_0]}\notag
	\lesssim &\| u\|^{3-\gamma}_{L^{\infty}_tL^{2}_x[J_0]}\;\|Iu\|^{\gamma-2}_{L^4_{t,x}[J_0]}\notag\\
	\lesssim &\| u^\lambda\|^{3-\gamma}_{L^{\infty}_tL^{2}_x[J_0]}\;
	\||\n|^{-\frac14}I P_{\leqslant N}u\|^{2(\gamma-2)/3}_{L^4_{t,x}[J_0]}\;\||\n|^\frac12Iu\|^{(\gamma-2)/3}_{L^4_{t,x}[J_0]}\notag\\
	\lesssim & \rho_0^{(\gamma-2)/6}\lambda^{-\frac{(\gamma-2)(3-\gamma)}{2}}
	\| u_0\|^{3-\gamma}_{L^{2}}\;\|\n I u\|_{U^2_\D[J_0]}^{(\gamma-2)/3}\nonumber\\
	\lesssim&
	\rho_0^{(\gamma-2)/6}N^{\frac{(1-s)(\gamma-2)(3-\gamma)}{2s+2-\gamma}}
	\|\n I u\|_{U^2_\D[J_0]}^{(\gamma-2)/3}
	.\label{eq:low-1}
	\end{align}
	For $j\geq 1$, we have by H\"older's inequality and Bernstein's inequalities
	\begin{align}
	&\|P_{N_j}u\|_{L^{\fgat}_tL^{\fga}_x[J_0]}\notag\\
	\lesssim &\|P_{N_j} u\|^{\gamma-2}_{L^{4}_{t,x}[J_0]}\;\|P_{N_j} u\|^{3-\gamma}_{L^{\infty}_tL^{2}_x[J_0]}\notag\\
	\lesssim &\left(\frac{N_j}{N}\right)^{1-s}\|IP_{N_j} u\|^{\gamma-2}_{L^{4}_{t,x}[J_0]}
	\;\|IP_{N_j} u\|^{3-\gamma}_{L^{\infty}_tL^{2}_x[J_0]}\notag\\
	\lesssim &\left(\frac{N_j}{N}\right)^{1-s}N_j^{-\frac12(\gamma-2)-(3-\gamma)}\||\n|^\frac12IP_{N_j}
	u\|^{\gamma-2}_{L^{4}_tL^{4}_x[J_0]}\;
	\|\n IP_{N_j} u\|^{3-\gamma}_{L^{\infty}_tL^{2}_x[J_0]}\notag\\
	\lesssim & N^{s-1}N_j^{-(s-(\ga))}\|\n I u\|_{U^2_\D[J_0]}.\label{eq:high-1}
	\end{align}
	Summing up \eqref{eq:low-1} and \eqref{eq:high-1}, we obtain
	\begin{equation}\label{eq:additional part-1}
	\begin{split}
	\|u\|_{L^{\fgat}_t L^{\fga}_x[J_0]}\lesssim \rho_0^{(\gamma-2)/6}N^{\frac{(1-s)(\gamma-2)(3-\gamma)}{2s+2-\gamma}}
	\|\n I u\|_{U^2_\D[J_0]}^{(\gamma-2)/3}\\+N^{-2+\frac\gamma2}\|\n Iu\|_{U^2_\D[J_0]}.
	\end{split}
	\end{equation}
	\subsection*{Case 2} $3\leq \gamma<4$.
	From H\"older's inequality, Sobolev embedding and the Gagliardo-Nirenberg inequaltiy, we have
	\begin{align}
	&\|P_{\leqslant N}u\|_{L^{\fgat}_tL^{\fga}_x[J_0]}\notag\\
	=&\|I P_{\leqslant N}u\|_{L^{\fgat}_tL^{\fga}_x[J_0]}\notag\\
	\lesssim &\|\n I u\|^{\gamma-3}_{L^{2}_tL^{4}_x[J_0]}\;\|Iu\|^{4-\gamma}_{L^4_{t,x}[J_0]}\notag\\
	\lesssim &\|\n I u\|^{\gamma-3}_{L^{2}_tL^{4}_x[J_0]}\;
	\||\n|^{-\frac14}Iu\|^{2(4-\gamma)/3}_{L^4_{t,x}[J_0]}\;\||\n|^\frac12Iu\|^{(4-\gamma)/3}_{L^4_{t,x}[J_0]}\notag\\
	\lesssim &\||\n|^{-\frac14}Iu\|^{2(4-\gamma)/3}_{L^4_{t,x}[J_0]}\;\|\n I u\|_{U^2_\D[J_0]}^{(2\gamma-5)/3}.\label{eq:low}
	\end{align}
	For $j\geq 1$, we have by H\"older's inequality, Bernstein's inequalities and Sobolev embedding
	\begin{align}
	&\|P_{N_j}u\|_{L^{\fgat}_tL^{\fga}_x[J_0]}\notag\\
	\lesssim &\|P_{N_j} u\|^{4-\gamma}_{L^{\fgat}_tL^{\efg}_x[J_0]}\;\|P_{N_j} u\|^{\gamma-3}_{L^{\fgat}_tL^{\efga}_x[J_0]}\notag\\
	\lesssim &\left(\frac{N_j}{N}\right)^{1-s}\|IP_{N_j} u\|^{4-\gamma}_{L^{\fgat}_tL^{\efg}_x[J_0]}
	\;\|IP_{N_j} u\|^{\gamma-3}_{L^{\fgat}_tL^{\efga}_x[J_0]}\notag\\
	\lesssim &\left(\frac{N_j}{N}\right)^{1-s}N_j^{-\frac12(4-\gamma)}\||\n|^\frac12IP_{N_j}
	u\|^{4-\gamma}_{L^{\fgat}_tL^{\efg}_x[J_0]}\;
	\|\n IP_{N_j} u\|^{\gamma-3}_{L^{\fgat}_tL^{\frac8{6-\gamma}}_x[J_0]}\notag\\
	\lesssim & N^{s-1}N_j^{-(s-(\ga))}\|\n I u\|_{U^2_\D[J_0]}.\label{eq:high}
	\end{align}
	Summing up \eqref{eq:low} and \eqref{eq:high}, we obtain
	\begin{equation}\label{eq:additional part}
	\|u\|_{L^{\fgat}_t L^{\fga}_x[J_0]}\lesssim \rho_0^{\frac{4-\gamma}6}\|\n Iu\|_{U^2_\D[J_0]}^{\frac{2\gamma-5}{3}}+N^{-2+\frac\gamma2}\|\n Iu\|_{U^2_\D[J_0]}.
	\end{equation}
	Substituting \eqref{eq:additional part-1} for $2<\gamma<3$ and \eqref{eq:additional part} for $3\leq \gamma<4$ to \eqref{eq:control the Duhamel} and then back to \eqref{eq:Duhamel-1},
	we close the bootstrap by choosing $N$ large enough and $\rho_0$ small, say
	\[0<\rho_0\ll \min \left\{1, N^{-\frac{6(1-s)(3-\gamma)}{2s+2-\gamma}}\right\}.\]
	The proof is complete.
\end{proof}
As the consequence of this proposition, we obtain that
\begin{equation}\label{eq:estimate in pieces}
\|\n I_N u^\ld\|_{U^2_\D(J_\ell\times\R^4)}\lesssim 1.
\end{equation}
on each
interval $J_\ell$. Using \eqref{eq:dod} to sum up \eqref{eq:estimate in pieces} with respect to $\ell$, we have
\begin{equation}\label{eq:base case}
\|\n I_N u^\ld\|_{U^2_\D(J\times\R^4)}\leq C(\|u_0\|_{H^s},\rho_0)N^{\frac{(\gamma-2)(1-s)}{s-(\ga)}+\frac12}.
\end{equation}
It grows up with $N$ polynomially and will be the base point of the induction on frequency
argument for long time Strichartz estimate proved in the next
section.

%
%
%
%
\section{The long-time Strichartz estimate}
In this section, we deduce long time Strichartz estimate for
$I_N u^\ld$.  The main result in this section is the
following proposition.
\begin{proposition}\label{LTST Estim}
	Assume $1\leq M\leq N $ and $E(I_Nu^{\lambda}(t))\leq 1$ for $t\in J$. If $3< \gamma<4$, then there is a constant for any $\e>0$, we have
	\begin{equation}\label{eq:LTSE}
	\|\n I_N P_{>M} u^{\lambda}\|_{U^2_\D(\jrf)}\lesssim_{\gamma,\e} 1+M^{-(4-\gamma)}N^{\e}\|\n I_N P_{>\frac M8} u^{\lambda}\|_{U^2_\D(\jrf)}.
	\end{equation}
\end{proposition}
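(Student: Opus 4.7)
The plan is the following. By the Strichartz inequality in the $U^2_\Delta$ framework,
\[
\|\n I_N P_{>M}u^\lambda\|_{U^2_\D(\jrf)}
\lesssim \|\n I_N P_{>M}u^\lambda(t_0)\|_{L^2_x}
+\|\n I_N P_{>M}\bigl((V*|u^\lambda|^2)u^\lambda\bigr)\|_{DU^2_\D(\jrf)},
\]
the data term being $\lesssim 1$ by the energy hypothesis $E(I_Nu^\lambda(t))\le 1$. Split $u^\lambda=u^\lambda_{\le M/8}+u^\lambda_{>M/8}$ and expand the cubic nonlinearity into eight trilinear pieces according to which of the three factors carries high frequencies. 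The purely low-low-low piece drops because its spatial Fourier support lies in $\{|\xi|\lesssim 3M/8\}$ and is annihilated by the outer $P_{>M}$.

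For each surviving piece, I would place the derivative $\n I_N$ on the highest-frequency factor and reduce matters to a bound on $\bigl\|\n I_N P_{>M}\bigl[(V*(A\,\overline{B}))\,C\bigr]\bigr\|_{DU^2_\D(\jrf)}$ with $A,B,C$ appropriate low/high parts of $u^\lambda$. Pairing against a dual test function in $V^2_\D$ and invoking the modified Coifman--Meyer theorem (Lemma \ref{lem:mCoifmanMeyer}) with $V\in L^{4/\gamma,\infty}$, one distributes the three factors into mixed $L^p_tL^q_x$ Strichartz norms. In the terms where $C=u^\lambda_{>M/8}$ and at most one of $A,B$ is high, two factors are controlled by the energy bound and Bernstein's inequalities while $\n I_N C$ produces the $\|\n I_N P_{>M/8}u^\lambda\|_{U^2_\D}$ appearing on the right-hand side. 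The negative power $M^{-(4-\gamma)}$ comes from the dyadic Bernstein losses on the low-frequency factors interacting across the $|x|^{-\gamma}$ kernel.

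The hardest contributions are the terms $(V*|u^\lambda_{>M/8}|^2)\,u^\lambda_{\le M/8}$ and the mixed term $(V*(u^\lambda_{\le M/8}\overline{u^\lambda_{>M/8}}))\,u^\lambda_{>M/8}$, in which two high-frequency factors sit inside the convolution. Here I would decompose space into dyadic annuli via $1=\sum_R\psi_R$, apply the weighted Hardy--Littlewood--Sobolev estimate (Corollary \ref{coro:whls}) to move the $|x|^{-\gamma}$ weight across annuli, use the local smoothing estimate in $U^p_\D,V^p_\D$ form (Corollary \ref{corollary:local smoothing in Up Vp}) to control $\||\n|^{1/p}u^\lambda\|_{L^p_tL^2_x(\R\times Q_R)}$, apply the radial Sobolev inequality (Proposition \ref{prop:2.7}) to extract a gain of $M^{1/2}$ from each high-frequency radial factor against the spatial weight $|x|^{-3/2}$, and invoke the interaction Morawetz bound (Lemma \ref{lem:IME}) to control $\||\n|^{-1/4}u^\lambda\|_{L^4_{t,x}}$. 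Summation in the dyadic radius $R$ costs at most the small factor $N^\e$. The main obstacle is combinatorial: one must balance exponents in the weighted HLS and Coifman--Meyer applications, distribute $\n I_N$ between the two high-frequency inputs, and verify that every estimate closes with exactly the factor $M^{-(4-\gamma)}N^\e\|\n I_NP_{>M/8}u^\lambda\|_{U^2_\D}$ on the right, so that combined with the base bound \eqref{eq:base case} the inductive estimate genuinely improves as $M$ grows.
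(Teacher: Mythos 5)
Your high-level skeleton (Duhamel plus $DU^2_\Delta$ estimate, vanishing of the all-low piece under $P_{>M}$, duality with $V^2_\Delta$, dyadic spatial annuli, local smoothing, radial Sobolev, weighted Hardy--Littlewood--Sobolev) matches the paper, but you have misidentified the hard case and, as a result, your plan would fail precisely where the real difficulty lies.

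The paper shows that the terms in which the convolution carries at least one high-frequency factor, namely contributions from $\bigl(|\cdot|^{-\gamma}*|u_{>M/8}|^2\bigr)u$ and the high-low cross terms, are in fact the \emph{easy} pieces: there the high frequencies inside the kernel can be sacrificed via Bernstein to produce the $M^{-(4-\gamma)}$ gain, and only Strichartz, H\"older, Hardy--Littlewood--Sobolev, Sobolev, interpolation and the base bound \eqref{eq:base case} are needed. The \emph{hard} term is the one you did not single out: $\bigl(|\cdot|^{-\gamma}*|u_{\le M/8}|^2\bigr)u_{>M/8}$, whose convolution is all low and whose only high factor sits outside and must also absorb $\nabla I$. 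It is for this term alone that one has to pair against $v\in V^2_\Delta$, decompose space into the ball $\{|x|\le 1/M\}$ plus dyadic annuli, invoke the local smoothing estimates \eqref{eq:local smoothing u}--\eqref{eq:local smoothing v}, the radial Sobolev bound \eqref{eq:rad sobolev}, and the weighted HLS inequality (Corollary \ref{coro:whls}), and finish with a Schur-type summation. Your description puts the heavy machinery on the wrong pieces, so your argument for the actual bottleneck is absent.

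Two further inaccuracies: the modified Coifman--Meyer lemma (Lemma \ref{lem:mCoifmanMeyer}) plays no role in this proposition — it is deployed only in the Section 5 energy-increment estimate — so distributing factors through that bilinear operator here is a detour; and the interaction Morawetz estimate is not invoked directly in the proof of Proposition \ref{LTST Estim}. Its influence is already baked into the polynomial-in-$N$ base case \eqref{eq:base case}, which is the only place $\||\n|^{-1/4}u^\lambda\|_{L^4_{t,x}}$ enters the long-time Strichartz argument. Finally, the $N^\e$ loss does not come from the annulus summation (the dyadic-in-$j$ sum converges with a genuine $2^{-j\e}$ gain thanks to the careful balance of $\e_6$ versus $\e_7$); it comes from interpolating between $L^\infty_t$ and finite-time exponents, combined with feeding in the base estimate \eqref{eq:base case}. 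You should re-examine your exponent bookkeeping on the low-low-high piece; as written the proof plan would not close.
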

\begin{remark}
	We have to afford an $\e-$loss to avoid the failure of the Hardy-Littlewood-Sobolev inequality in the end-point case.
\end{remark}

Comparing with \eqref{eq:base case}, we can obtain uniform boundedness of
the high frequency part of $\n I_N u^\ld$ in the endpoint Strichartz space.
\begin{corollary}
	Let $3<\gamma<4$ and $E(I_Nu^{\lambda}(t))\leq 1$ for $t\in J$, and $N$ large enough.
	Then
	\begin{equation}\label{eq:longtimestrichartz}
	\|\n I_N u^\ld_{>\frac{N}{100}}\|_{L^2_tL^4_x\left(\jrf\right)}\lesssim  \|\n I_N u^\ld_{>\frac{N}{100}}\|_{U^2_\D(\jrf)}\lesssim_{\|u_0\|_{H^s}} 1.
	\end{equation}
\end{corollary}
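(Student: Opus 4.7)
The first inequality in \eqref{eq:longtimestrichartz} is immediate from the embedding $U^2_\D(\jrf) \subset L^2_t L^4_x(\jrf)$, since $(2,4)\in\Ld_0$ in four spatial dimensions. Hence the only real task is to establish the uniform bound
\[
\|\n I_N u^\ld_{>\frac{N}{100}}\|_{U^2_\D(\jrf)} \lesssim_{\|u_0\|_{H^s}} 1.
\]
The plan is to apply Proposition \ref{LTST Estim} iteratively, starting from the frequency threshold $M_0 \approx N/100$ and dropping by a factor of $8$ at each step, until we can close the estimate using the base bound \eqref{eq:base case}.

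Set $M_j = N/(100 \cdot 8^j)$ for $j=0,1,\ldots,k$, where $k$ is a fixed integer to be chosen. Applying \eqref{eq:LTSE} at frequency $M_j$ with some small $\e > 0$ yields
\[
\|\n I_N P_{>M_j} u^\ld\|_{U^2_\D} \lesssim_\gamma 1 + M_j^{-(4-\gamma)} N^\e \|\n I_N P_{>M_{j+1}} u^\ld\|_{U^2_\D}.
\]
A direct computation gives $M_j^{-(4-\gamma)} N^\e \lesssim_{\gamma,k} N^{-(4-\gamma)+\e}$ uniformly in $0\le j\le k-1$, which is much smaller than $1$ for $N$ large and $\e < 4-\gamma$. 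Iterating the above inequality $k$ times produces a telescoping estimate of the form
\[
\|\n I_N P_{>M_0} u^\ld\|_{U^2_\D} \lesssim_{\gamma,k} \sum_{j=0}^{k-1} N^{j(\e-(4-\gamma))} + N^{k(\e-(4-\gamma))} \,\|\n I_N P_{>M_k} u^\ld\|_{U^2_\D}.
\]
The geometric sum on the right is $O(1)$, while the remaining term we bound by $\|\n I_N u^\ld\|_{U^2_\D}$ and invoke \eqref{eq:base case} to control it by $N^{\alpha}$ with $\alpha = \frac{(\gamma-2)(1-s)}{s-(\gamma/2-1)}+\frac12$.

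Finally we fix the integer $k$ large enough so that $k(4-\gamma-\e) > \alpha$; this is possible since $3<\gamma<4$ and $\e$ can be taken arbitrarily small. With such a choice, $N^{k(\e-(4-\gamma))}\cdot N^\alpha \to 0$ as $N\to\infty$, and we conclude $\|\n I_N u^\ld_{>N/100}\|_{U^2_\D(\jrf)}\lesssim 1$ with implicit constant depending only on $\|u_0\|_{H^s}$. The main (minor) technical point is to check that after $k$ iterations the intermediate thresholds still satisfy $1\leq M_j\leq N$ so that Proposition \ref{LTST Estim} applies at each step; for $N$ sufficiently large and $k$ fixed this is automatic, and one can absorb the finitely many low-frequency dyadic shells between $M_k$ and $1$ into the base bound \eqref{eq:base case} using Bernstein's inequality if needed.
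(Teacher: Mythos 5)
Your proposal is correct and follows essentially the paper's own route: iterate the recursion of Proposition \ref{LTST Estim} starting from the threshold $N/100$, with the factor $M^{-(4-\gamma)}N^{\e}$ supplying a gain of a fixed negative power of $N$ per step, and close using the polynomial base bound \eqref{eq:base case}. The only (harmless) difference is bookkeeping: you stop after a fixed number $k$ of iterations chosen so that $k(4-\gamma-\e)$ beats the exponent in \eqref{eq:base case}, while the paper iterates $L\approx \log N/(10\log 8)$ times and absorbs the accumulated constants by taking $N$ large; both versions close.
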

\begin{proof}The technique is standard. The first inequality follows by $U^2_\D(\jrf)\hookrightarrow L^2_tL^4_x\left(\jrf\right)$.
Now we show the second inequality by iteration.
Given $\gamma<4$,
we take $\e$ small enough and
	set $$M=0.01 N\;,\quad c_\gamma=\frac{4-\gamma}{2}$$ to get
	\begin{equation}
	\label{cnxz}
		\|\n I_N P_{>\frac{N}{100}} u^{\lambda}\|_{U^2_\D(\jrf)}\leq C_0+C_0N^{-c_\gamma}\|\n I_N P_{>\frac{N}{800}} u^{\lambda}\|_{U^2_\D(\jrf)},
	\end{equation}
	where $C_0\ge1$ depends only on $\gamma$ and $\e$.
	Let
	\[N\ge10^4 C_0^{\frac{10}{c_\gamma}},\]
	and
	\[
	L=\left\lfloor\frac{\log N}{10\log 8}\right\rfloor.
	\]
	Iterating \eqref{cnxz} $L$ times and using \eqref{eq:base case}, we get
	\begin{multline}
	\|\n I_N P_{>\frac{N}{100}} u^{\lambda}\|_{U^2_\D(\jrf)}\leq
	\sum_{\ell=1}^{L-1}C_0^\ell\left(\frac{N}{100}\right)^{-(\ell-1)c_\gamma} 8^{c_\gamma\sum_{j=0}^{\ell-1}j}\\
	+C_0^L\left(\frac{N}{100}\right)^{-Lc_\gamma}8^{c_\gamma\sum_{j=0}^{L}j}
	\bigl\|\nabla I_N P_{>\frac{N}{100}8^{-L}} u^\lambda\bigr\|_{U^2_\D(J\times\R^4)}\\
	\le 2C_0+
	C(\|u_0\|_{H^s},\rho_0)
	(N^{\frac{c_\gamma}{4}}C_0)^{-L}N^{\frac{(\gamma-2)(1-s)}{s-(\ga)}+\frac12}.
	\end{multline}
	Now, it is easy to see that the proof is  concluded by
	taking  $N$ sufficiently large  and.
\end{proof}
\begin{proof}[Proof of Proposition \ref{LTST Estim} ]
	For the sake of brevity, we suppress $N$ in $I_N$ and $\ld$ in $u^\ld$. Applying
	$P_{>M}$ to both sides of the modified system, we get
	\[Iu_{>M}(t)=e^{it\D}Iu_{>M}(0)-i\int^t_{0}e^{i(t-\tau)\D}IP_{>M}(\conv u)(\tau)d\tau,\]
	and hence
	\begin{align}
	\|\n I u_{>M}\|_{U^2_\D(\jrf)}
	\lesssim &\| I u_{>M}(0)\|_{\dot H^1}+\|\nabla IP_{>M}(\conv u)\|_{DU^2_\D(\jrf)}.\label{eq:lt-1}
	\end{align}
	Noting that
	\[P_{>M}\bigl(\convlm u_{\leq \me}\bigr)=0,\]
	we only have to estimate
	contributions to \eqref{eq:lt-1} from the  following two terms
	\begin{equation}\label{eq:*-1}
	\bigl\|\n I P_{>M}\bigl(\convgm u\bigr)\bigr\|_{DU^2_\D(\jrf)},
	\end{equation}
	\begin{equation}\label{eq:*-2}
	\bigl\|\n I P_{>M}\bigl(\convlm u_{>\me}\bigr)\bigr\|_{DU^2_\D(\jrf)}.
	\end{equation}

	It is no need for us to distinguish $u$ and $\bar u$ below, so we adopt the notion that $u$ means either $u$ or $\bar{u}$. To perform
	nonlinear estimates, we will introduce a sequence of small
	parameters $\e_1, \cdots, \e_{7}$. It will be clear from
	the context how they depend
	on each other.
    All these parameters
	will be taken sufficiently small in the end.\\
	
	\noindent {\bf The estimation of \eqref{eq:*-1}.}
	We are aimed to show for any $\e>0$, there holds
	\begin{equation}
	\label{ggggg}
	\eqref{eq:*-1}\lesssim N^{\e}M^{-(4-\gamma)}\|\n I u_{>\me}\|_{U^2_\D(\jrf)}.
	\end{equation}
	
	As observed in \cite{CKSTT04:CPAM}, $\n I$ obeys the Leibnitz rule,
	and we need to handle
	\begin{equation}\label{eq:1.3}
	\|\convgm \n I u\|_{DU^2_\D(\jrf)},
	\end{equation}
	\begin{equation}\label{eq:1.4}
	\Bigl \|\Bigl(|\cdot|^{-\gamma}*\bigl(\n Iu_{>\me}\cdot u_{>\me}\bigr)\Bigr)u_{\leq N}\Bigr\|_{DU^2_\D(\jrf)},
	\end{equation}
	and
	\begin{equation}\label{eq:1.5}
	\Bigl \|\Bigl(|\cdot|^{-\gamma}*\bigl(\n Iu_{>\me}\cdot u_{>\me}\bigr)\Bigr)u_{> N}\Bigr\|_{DU^2_\D(\jrf)}.
	\end{equation}
	Let us start with \eqref{eq:1.3}. Using
	\[L^{q'}(J;L^{r'}(\R^4))\subset DU^2_\D(\jrf),\;\text{for} \;q>2, (q,r)\in \Ld_0,\]
	and the H\"older and Hardy-Littewood-Sobolev inequalities, we have
	\begin{multline}
	\eqref{eq:1.3}\lesssim \|\convgm \n I u\|_{L^{q'}(J;L^{r'}(\R^4))}\\
	\lesssim\|\n I u\|_{L^{\infty-}_tL^{2+}_x(\jrf)}\;\|u_{>\me}\|^2_{L^{2q_1}_t L^{2r_1}_x(\jrf)},\label{4.8.1/2}
	\end{multline}
	where
	\[(q,r)=\left(2+\e_1,\,\frac{4+2\e_1}{1+\e_1}\right)\in \Ld_0,\]
	\[(\infty-,2+)=\left(\frac{2(2+\e_1)}{\e_1},\,\frac{8+4\e_1}{4+\e_1}\right)\in\Ld_0,\]
	\[(q_1,r_1)=\left(2,\;\frac{4}{5-\gamma}\right),\]
	\[\frac1{r'}=\frac1{2+}+\frac{\gamma-4}{4}+\frac1{r_1},\;\frac1{q'}=\frac{1}{\infty-}+\frac1{q_1}.\]
	By H\"older's inequality and \eqref{eq:base case}, we have
	by taking $\e_1$ small enough
	\[\|\n I u\|_{L^{\infty-}_tL^{2+}_x(\jrf)}\leq\|\n I u\|^{\frac{2}{2+\e_1}}_{L^{\infty}_tL^{2}_x(\jrf)}\;\|\n I u\|^{\frac{\e_1}{2+\e_1}}_{U^2_\D(\jrf)}\lesssim N^{\e}.\]
	On the other hand, Sobolev embedding and interpolation yield
	\begin{multline*}
	\bigl\|u_{>\me}\bigr\|_{L^{4}_t(J;L^{\frac{8}{5-\gamma}}_x(\R^4))}
	\lesssim
	\bigl\||\n|^{\ga} u_{>\me}\bigr\|_{L^{4}_t(J;L^{\frac{8}{3}}_x(\R^4))}\\
	\lesssim \bigl\||\n|^{\ga} u_{>\me}\bigr\|^\frac12_{L^{2}_t(J;L^{4}_x(\R^4))}\;\bigl\||\n|^{\ga} u_{>\me}\bigr\|^\frac12_{L^{\infty}_t(J;L^{2}_x(\R^4))},
	\end{multline*}
	where, we may estimate by the definition of $\n I$
	\[
	\bigl\||\n|^{\ga} u_{>\me}\bigr\|_{L^{2}_t(J;L^{4}_x(\R^4))}\lesssim M^{-(2-\frac\gamma2)}\|\n I u_{>\me}\|_{U^2_\D(\jrf)},\]
	and
	\[
	\bigl\||\n|^{\ga} u_{>\me}\bigr\|_{L^{\infty}_t(J;L^{2}_x(\R^4))}\lesssim M^{-(2-\frac\gamma2)}.\]
	Therefore, we may substitute these estimates to \eqref{4.8.1/2} and get
	\begin{equation}\label{eq:estimate 4.6}
	\eqref{eq:1.3}\lesssim M^{-(4-\gamma)}N^{\e}\|\n I u_{>\me}\|_{U^2_\D(\jrf)}.
	\end{equation}

	Next, we deal with \eqref{eq:1.4}. Similar to the argument for \eqref{eq:1.3}, we have
	\begin{align*}
	\eqref{eq:1.4}\lesssim &\Bigl \|\Bigl(|\cdot|^{-\gamma}*\bigl(\n Iu_{>\me}\cdot u_{>\me}\bigr)\Bigr)u_{\leq N}\Bigr\|_{L^{q'}_tL^{r'}_x(\jrf)}\\
	\lesssim& \bigl\|\n I u_{>\me}\bigr\|_{L^2_tL^4_x(\jrf)}\;\|u_{\leq N}\|_{L^{\infty-}_tL^{4+}_x(\jrf)}\;\bigl\|u_{>\me}\bigr\|_{L^\infty_tL^{\frac{4}{5-\gamma}}_x(\jrf)},
	\end{align*}
	where \[(q,r)=\left(2+\e_2,2\frac{2+\e_2}{1+\e_2}\right)\in\Lambda_0, \;(\infty-,4+)=\left(2\frac{2+\e_2}{\e_2},4+2\e_2\right)\in\Lambda_1.\]
	By Sobolev embedding and the assumption $\gamma> 3$, we have
	\[
	\bigl\|u_{>\me}\bigr\|_{L^\infty_tL^{\frac{4}{5-\gamma}}_x(\jrf)}\lesssim \||\n|^{\gamma-3}u_{>\me}\|_{L^\infty_tL^2_x(\jrf)}
	\lesssim M^{\gamma-4}\|\n Iu\|_{L^\infty_tL^2_x(\jrf)}.\]
	By Sobolev embedding and H\"older's inequality,  we have
	\begin{multline*}
	\|u_{\leq N}\|_{L^{\infty-}_tL^{4+}_x(\jrf)}\lesssim \|\n I u_{\leq N}\|_{L^{\infty-}_tL^{2+}_x(\jrf)}\\
	\lesssim\|\n I u_{\leq N}\|^{\frac{2}{2+\e_2}}_{L^{\infty}_tL^{2}_x(\jrf)}\|\n I u_{\leq N}\|^{\frac{\e_2}{2+\e_2}}_{L^{2}_tL^{4}_x(\jrf)}
	\lesssim N^{\e},
	\end{multline*}
	by letting $\e_2$ small enough.
	Hence, we have
	\begin{equation}\label{eq:estimate 4.7}
	\eqref{eq:1.4}\lesssim M^{-(4-\gamma)}N^{\e}\|\n I u_{>\me}\|_{U^2_\D(\jrf)}.
	\end{equation}

	Now, we tackle \eqref{eq:1.5}. Choose $(q,r)=\left(2+\e_3,2\frac{2+\e_3}{1+\e_3}\right)\in\Ld_0$ and use
	$L^{q'}_tL^{r'}_x(\jrf)\subset DU^2_\D(\jrf)$ as before to get
	\begin{align*}
	\eqref{eq:1.5}\lesssim& \|\n I u_{>\me}\|_{L^{2}_tL^{4}_x(\jrf)}\;\|u_{>\me}\|_{L^\infty_tL^{\frac{8}{6-\gamma}}_x(\jrf)}\;\|u_{>N}\|_{L^{\infty-}_tL^{\frac{8}{6-\gamma}+}_x(\jrf)},
	\end{align*}
	where
	\[\left(\infty-,\frac{8}{6-\gamma}+\right)=\left(\frac{4+2\e_3}{\e_3},\frac{8}{6-\gamma-8\,\kappa}\right),\;\kappa=\frac{\e_3}{4(2+\e_3)}.\]
	By Sobolev embedding and the definition of the $\n I$ operator, we get
	\[\|u_{>\me}\|_{L^\infty_tL^{\frac{8}{6-\gamma}}_x(\jrf)}\lesssim \bigl\||\n|^{\frac\gamma2-1}u_{>\me}\bigr\|_{L^\infty_tL^2_x(\jrf)}\lesssim M^{-2+\frac\gamma2}\|\n Iu_{>\me}\|_{L^\infty_tL^2_x(\jrf)}.\]
	Next, we estimate by  interpolation and Sobolev embedding
	inequalities
	\begin{align*}
	\|u_{>N}\|_{L^{\infty-}_tL^{\frac{8}{6-\gamma}+}_x(\jrf)}
	\lesssim& \|u_{>N}\|^{\frac{2-\e_3}{2+\e_3}}_{L^\infty_tL^{\frac{8}{6-\gamma}}_x(\jrf)}
	\|u_{>N}\|^{\frac{2\e_3}{2+\e_3}}_{L^4_tL^{\frac{8}{5-\gamma}}_x(\jrf)}\\
	\lesssim& N^{-(2-\frac\gamma2)\frac{2-\e_3}{2+\e_3}}\bigl\||\n|^{\frac{\gamma-2}{2}}u_{>N}\bigr\|_{L^4_tL^{\frac83}_x(\jrf)}^{\frac{2\e_3}{2+\e_3}}.
	\end{align*}
	In view of \eqref{eq:base case} and the definition of $\n I$, we have
	\begin{align*}
	\||\n|^{\frac{\gamma-2}{2}}u_{>N}\|_{L^4_tL^{\frac83}_x(\jrf)}
	\lesssim N^{-(2-\frac\gamma2)}\|\n Iu\|_{U^2_\Delta(\jrf)}
	\lesssim N^{-2+\frac\gamma2}N^{\frac{(\gamma-2)(1-s)}{s-(\gamma/2-1)}+\frac12}.
	\end{align*}
	Hence, we get
	\[\|u_{>N}\|_{L^{\infty-}_tL^{\frac{8}{6-\gamma}+}_x(\jrf)}\lesssim M^{-2+\frac\gamma2}N^{\e},\]
	by choosing $\e_3$ sufficiently small. Thus, we have
	\begin{equation}\label{eq:estimate 4.8}
	\eqref{eq:1.5}\lesssim M^{-(4-\gamma)}N^{\e}\|\n Iu_{>\frac M8}\|_{U^2_\Delta[J]}.
	\end{equation}
	Collecting  \eqref{eq:estimate 4.6} \eqref{eq:estimate 4.7} and \eqref{eq:estimate 4.8}, we arrive at \eqref{ggggg}.\\

	\noindent {\bf The estimation of \eqref{eq:*-2}}. In this part, we will use local smoothing estimates for the Schr\"{o}dinger operators and the radial Sobolev embedding along
	with the duality relation $V^2_\D=(DU^2_\D)^*$.
	Without loss of generality, taking $v\in V^2_\D(\jrf)$ with $v=P_{>M} v$ and $\|v\|_{V^2_\D}=1$, we see \eqref{eq:*-2} is bounded by
	\begin{equation}
	\label{nvnvnv}
	\int_J\left\langle v,\n I P_{>M}(\convlm\cdot u_{>\me})\right\rangle\;dt.
	\end{equation}
	The Leibnitz rule obeyed by $\n I$ reduces \eqref{nvnvnv} to estimating
	\begin{align}
	&\int_J\left\langle v,|\cdot|^{-\gamma}*\bigl(\n Iu_{\leq \me} \cdot u_{\leq \me}\bigr)u_{>\me}\right\rangle\;dt,\label{eq:L-2}\\
	&\int_J\left\langle v, \convlm\cdot \n Iu_{>\me}\right\rangle\;dt.\label{eq:hharder}
	\end{align}
	
	Let us deal with \eqref{eq:L-2} first.
	By H\"older and Hardy-Littewood-Sobolev's inequalities, we have
	\begin{multline*}
	\eqref{eq:L-2}\lesssim \|u_{>\me}\|_{L^2_tL^4_x(\jrf)}\|u_{\leq \me}\|_{L^{\infty-}_tL^{\frac{4}{4-\gamma}+}_x(\jrf)}\|\n I u_{\leq \me}\|_{L^\infty_tL^2_x[J]}\|v\|_{L^{2+\e_4}_tL^{\frac{4+2\e_4}{1+\e_4}}_x(\jrf)}\\
	\lesssim\|u_{>\me}\|_{L^2_tL^4_x(\jrf)}\|u_{\leq \me}\|_{L^{\infty-}_tL^{\frac{4}{4-\gamma}+}_x(\jrf)}\|\n I u_{\leq \me}\|_{L^\infty_tL^2_x(\jrf)},
	\end{multline*}
	where
	\[\left(\infty-,\frac{4}{4-\gamma}+\right)=\left(\frac{4+2\e_4}{\e_4},\frac{4+2\e_4}{4-\gamma+\frac{(3-\gamma)\e_4}{2}}\right).\]
	We have by definition of $\n I$
	\[\|u_{>\me}\|_{L^2_tL^4_x(\jrf)}\lesssim \frac1M\|\n Iu_{>\me}\|_{U^2_\D(\jrf)}.\]
	From Bernstein's inequality and interpolation, we have
	by letting $\e_4$ small enough
	\[\|u_{\leq \me}\|_{L^{\infty-}_tL^{\frac{4}{4-\gamma}+}_x(\jrf)}\lesssim \||\n|^{\gamma-2}Iu_{\leq M}\|_{L^{\infty-}_tL^{2+}_x(\jrf)}\lesssim M^{\gamma-3}N^{\e}.\]
	Thus, we obtain
	\[\eqref{eq:L-2}\lesssim M^{-(4-\gamma)}N^{\e}\|\n I u_{>\me}\|_{U^2_\D(J\times\R^4)}.\]
	
	To estimate  \eqref{eq:hharder}, we shall use local smoothing and radial Sobolev embedding. Let $\chi$ be the characteristic function of the ball $\{x\in\R^4:|x|\leq 1/M\}$ and for $j\geq 0$ write
	$\psi_j(x)=\chi(2^{-(j+1)}x)-\chi(2^{-j}x)$  such that
	\[
	1=\chi(x)+\sum_{j=0}^{+\infty}\psi_{j}(x).
	\]
	We need to estimate
	\begin{equation}\label{eq:L}
	\int_J\left\langle \chi\, v,\chi\, \n Iu_{>\me}\cdot\convlm\right\rangle\;dt,
	\end{equation}
	and
	\begin{equation}\label{eq:M}
	\int_J\left\langle\psi_j(x)\, v,\psi_j(x)\,\n I u_{>\me}\cdot\convlm\right\rangle\;dt.
	\end{equation}
	
	For \eqref{eq:L}, we may use H\"older's inequality to bound  it by
	\begin{align}
	\|\chi v\|_{L^{2+\e_5}_tL^{2}_x(J\times\R^4)}
	\;\|\chi\n Iu_{>\me}\|_{L^2_{tx}(J\times\R^4)}\;
	\bigl\|\convlm\bigr\|_{L^{\infty-}_tL^{\infty}_x(J\times\R^4)},\label{eq:L-1}
	\end{align}
	where
	\[\infty-=\frac{2(2+\e_5)}{\e_5}.\]
	From the local smoothing estimate in Corollary \ref{corollary:local smoothing in Up Vp}
	we have
	\begin{equation}
	\label{eq:smoothing-1}
	\|\chi v\|_{L^{2+\e_{5}}_{t}L^{2}_{x}(J\times\R^4)}\lesssim M^{-1}M^{\frac{\e_{5}}{2+\e_{5}}},
	\end{equation}
	\begin{equation}
	\label{eq:smoothing-2}
	\bigl\|\chi \nabla Iu_{>\frac M8}\bigr\|_{L^{2}_{t,x}(J\times\R^4)}\lesssim M^{-1}\|\nabla I u_{>\frac M8}\|_{U^{2}_{\Delta}(J\times\R^4)}.
	\end{equation}
	For the third factor in \eqref{eq:L-1}, since the Fourier transform of
	$\bigl[|\cdot|^{-\gamma}*|u_{\le \frac{M}{8}}(t,\cdot)|^2\bigr](x)$ with respect to $x$ is supported in a ball of radius
	approximately $ M$ and centered at the origin, we use H\"older and Bernstein's inequalities and then  Hardy-Littewood-Sobolev's inequality to get
	\begin{multline*}
\|\convlm\|_{L^{\infty-}_{t}L^{\infty}_{x}(J\times\R^4)}\\
\lesssim \|\convlm\|_{L^{2}_{t}L^{\infty}_{x}(J\times\R^4)}^{\frac{\e_{5}}{2+\e_{5}}}\|\convlm\|_{L^{\infty}_{t,x}(J\times\R^4)}^{\frac{2}{2+\e_{5}}}\\
\lesssim M^{(\gamma-3)\frac{\e_{5}}{2+\e_{5}}}\|u_{\leq \frac M8}\|_{L^{4}_{t}L^8_x(J\times\R^4)}^{\frac{2\e_{5}}{2+\e_{5}}}
M^{(\gamma-2)\frac{2}{2+\e_{5}}}
\|I u_{\leq \frac M8}\|^{\frac{4}{2+\e_{5}}}_{L^{\infty}_{t}L^{4}_{x}(J\times\R^4)}\\
\lesssim M^{\gamma-2}M^{-\frac{\e_{5}}{2+\e_{5}}}\|\nabla Iu_{\le\frac{M}{8}}\|_{L^4_tL^{\frac{8}{3}}_x(J\times\R^4)}^{\frac{2\e_{5}}{2+\e_{5}}}	\|\nabla I u_{\leq \frac M8}\|^{\frac{4}{2+\e_{5}}}_{L^{\infty}_{t}L^{2}_{x}(J\times\R^4)}.
\end{multline*}
Therefore, we have for sufficiently small $\e_{5}$
	\[\eqref{eq:L-1}\lesssim
	M^{\gamma-4}N^{\e}\|\n Iu_{>\me}\|_{U^2_\D(J\times\R^4)}.\]

	Now we estimate \eqref{eq:M}
	by H\"older's inequality,
	\begin{multline*}
	\eqref{eq:M} \lesssim
	\|\psi_j(x)|x|^{-\frac34}v\|_{L^{2+\e_{6}}_tL^2_x(J\times\R^4)}
	\|\psi_j(x)|x|^{-\frac34}\n Iu_{>\me}\|_{L^2_tL^{2+\e_{7}}_x(J\times\R^4)}\\
	\times\|V_j(x)\convlm\|_{L^{\infty-}_tL^{\infty-}_x(J\times\R^4)},
	\end{multline*}
	where  $V_j(x)=\psi_j(x)|x|^\frac32$ and
	\[ (\infty-,\infty-)=\left(2+\frac{4}{\e_6},\,2+\frac{4}{\e_{7}}\right).\]
	From local smoothing estimates  \eqref{eq:local smoothing v} and \eqref{eq:local smoothing u}, we have respectively
	\begin{equation}
	\label{nhcd}
		\bigl\|\psi_j(x)|x|^{-\frac34}v\bigr\|_{L^{2+\e_6}_tL^2_x(J\times\R^4)}
	\lesssim M^{-\frac14}2^{-\frac{j}{4}}M^{\frac{\e_{6}}{2+\e_{6}}}2^{-\frac{\e_{6}}{2(2+\e_{6})}},
	\end{equation}
	and
	\begin{multline}
	\bigl\|\psi_j(x)|x|^{-\frac34}\n Iu_{>\me}\bigr\|_{L^2_tL^{2+\e_{7}}_x(J\times\R^4)}\\
	\lesssim \bigl\|\psi_j(x)|x|^{-\frac34}\n Iu_{>\me}\bigr\|_{L^2_{t,x}(J\times\R^4)}^{\frac{2-\e_{7}}{2+\e_{7}}}\; \bigl\|\psi_j(x)|x|^{-\frac34}\n Iu_{>\me}\bigr\|_{L^2_tL^{4}_x(J\times\R^4)}^{\frac{2\e_{7}}{2+\e_{7}}}\\
	\lesssim \left(\frac{2^j}{M}\right)^{-\frac{3}{4}}\left[\left(\frac{2^j}{M}\right)^{1/2}M^{-1/2}\right]^{\frac{2-\e_{7}}{2+\e_{7}}}\|\n I u_{>\me}\|_{U^2_\D(J\times\R^4)}\\
	\lesssim M^{-\frac14}M^{\frac{2\e_{7}}{2+\e_{7}}}2^{-\frac j4}2^{-\frac{\e_{7}}{2+\e_{7}}}\|\n I u_{>\me}\|_{U^2_\D(J\times\R^4)}.\label{cdscd}
	\end{multline}
	 Next, by  Corollary \ref{coro:whls}
	and H\"older's inequality, we have
	
	\begin{multline}
	\bigl\|V_j(x)\convlm\bigr\|_{L^{\infty-}_tL^{\infty-}_x(J\times\R^4)}
	\lesssim\bigl\|V_j(x)|u_{\leq\me}|^2\bigr\|_{L^{\infty-}_tL^p_x(J\times\R^4)}\\
	\lesssim \bigl\|V_j(x)|u_{\leq \me}|^2\bigr\|^{\frac{2}{2+\e_{6}}}_{L^\infty_t L^{p}_x(J\times\R^4)}
	\bigl\|V_j(x)|u_{\leq \me}|^2\bigr\|^{\frac{\e_{6}}{2+\e_{6}}}_{L^2_tL^p_x(J\times\R^4)}\label{eq:A}
	\end{multline}
	where
	\[\frac1{p}=\frac{4-\gamma}{4}+\frac{\e_{7}}{4+2\e_{7}}.\]
	To tackle the first factor in \eqref{eq:A}, we use the Sobolev type inequality for spherically symmetric functions \eqref{eq:rad sobolev}, H\"older
	and  Bernstein's inequality to obtain
	\begin{align*}
	\bigl\|\psi_{j}(x)|x|^\frac32 |u_{\leq\me}(t,x)|^2\bigr\|_{L^{ p}_x}
	\lesssim &\sum_{N_2\leq N_1\leq M}\||x|^\frac32 u_{N_1}(t,x)\|_{L^\infty_x}\|\psi_{j}(x)u_{N_2}(t,x)\|_{L^{ p}_x}\\
	\lesssim &\sum_{N_2\leq N_1\leq M}N_1^{-\frac12}\|Iu_{N_1}(t,\cdot)\|_{\dot H^1}\;
	\left(\frac{2^j}{M}\right)^{\frac{1}{2}}
	N_2^{\frac{3}{2}-\frac{4}{p}}\|Iu_{N_2}(t,\cdot)\|_{\dot H^1}\\
	\lesssim&\left(\frac{2^j}{M}\right)^{\frac{1}{2}}
	\sum_{N_2\leq N_1\leq M}N_1^{-\frac12}
	N_2^{\frac{3}{2}-\frac{4}{p}}
	\|Iu_{N_1}(t,\cdot)\|_{\dot H^1}\,\|Iu_{N_2}(t,\cdot)\|_{\dot H^1}
	\end{align*}
	where $N_1,N_2$ are dyadic integers and
	\[\frac{3}{2}-\frac4{ p}=\gamma-\frac{5}{2}-\frac{2\e_7}{2+\e_7}.\]
	If $\gamma>3$, then we have
	\[
	\sum_{N_2\le N_1\le M}\left(\frac{N_2}{N_1}\right)^{1/2} N_2^{\gamma-3-\frac{2\e_{7}}{2+\e_{7}}}\lesssim M^{\gamma-3-\frac{2\e_{7}}{2+\e_{7}}}.
	\]
	Schur's Lemma yields
	\begin{equation}
	\label{njnjnjn}
	\bigl\|V_j(x) |u_{\leq\me}(t,x)|^2\bigr\|_{L^\infty_tL^{p}_x(J\times\R^4)}
	\lesssim\left(\frac{2^j}{M}\right)^{\frac{1}{2}} M^{\gamma-3-\frac{2\e_{7}}{2+\e_{7}}}.
	\end{equation}
	The second factor of
	\eqref{eq:A} is estimated by means of Sobolev embedding and Bernstein's inequality
	\begin{multline}
	\label{cds}
	\left(\frac{2^j}{M}\right)^{\frac{3}{2}}\bigl\|u_{\le \frac{M}{8}}\|^2_{L^4_tL^{2p}_x(J\times\R^4)}\\
	\lesssim
		\left(\frac{2^j}{M}\right)^{\frac{3}{2}}
		M^{\gamma-3-\frac{2\e_{7}}{2+\e_{7}}}
		\bigl\|\nabla I u_{\le \frac{M}{8}}\|^2_{L^4_tL^{\frac{8}{3}}_x(J\times\R^4)}
		\lesssim 2^{\frac{3}{2}j}
		\|\nabla I u_{\le \frac{M}{8}}\|_{U^2_\Delta(J\times\R^4)}^2.
	\end{multline}
	Collecting \eqref{nhcd}\eqref{cdscd}\eqref{eq:A}\eqref{njnjnjn} and \eqref{cds}, we have
	\[
	\eqref{eq:M}\lesssim M^{\gamma-4}M^{\frac{\e_{6}}{2+\e_{6}}+\frac{2\e_{7}}{2+\e_{7}}}2^{j\left(\frac{\e_{6}}{2+\e_{6}}-\frac{\e_{7}}{2+\e_{7}}\right)}\|\n I u_{>\me}\|_{U^2_\D(J\times\R^4)}.
	\]
    For any $\gamma\in(3,4)$ and $\e>0$, we first take $\e_{7}$ small enough such that
     \[\gamma-3>\frac{2\e_{7}}{2+\e_{7}},\]
     and then take $\e_{6}$ much smaller such that
     \[
     \frac{\e_{7}}{2+\e_{7}}-\frac{\e_{6}}{2+\e_{6}}>\frac{\e_{7}}{10}.
     \]
	Thus for $\e_7>0$ small enough, we have
	\[\eqref{eq:M}\lesssim M^{-(4-\gamma)}N^{\e}2^{-j\e}\|\n I u_{>\me}\|_{U^2_\D[J]}.\]
    Summing over $j\geq 0$ in \eqref{eq:M} along with the estimate of \eqref{eq:L}, we obtain
	\[\eqref{eq:hharder}\lesssim M^{-(4-\gamma)}N^{\e}\|\n I u_{>\frac M8}\|_{U^2_\Delta(J\times\R^4)}.\]
	The proof is complete.
\end{proof}

%
%
%
%

\section{Energy increment of $Iu$}
In this section, we estimate the energy increment
\[\int_J \frac{d}{dt}E(I_N u^\ld)(t)dt,\]
by the long time Strichartz estimate in Section 4.
Since this part is pretty standard in the litterature,
we will only sketch the proof, where we again suppress $N$, $\ld$ and the
time interval $J$ for brevity.
\begin{proposition}\label{pro:energy increment}
	Let $3< \gamma<4$. Then there is a constant $C$ depending only on $\|u_0\|_{H^s}$ such that
	\begin{equation*}
	\int_J \left|\frac{d}{dt}E(I_N u^\ld)(t)\right|dt\leq C N^{-\frac{4-\gamma}{2}}.
	\end{equation*}
\end{proposition}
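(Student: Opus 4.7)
The plan is to identify $\frac{d}{dt}E(I_Nu^\ld)$ as a commutator expression which vanishes when all input frequencies lie below $N$, and then to bound it using the long-time Strichartz estimate of Proposition \ref{LTST Estim} and the Morawetz control of Lemma \ref{lem:IME}. A direct computation from the modified equation \eqref{eq:m-nlh} gives
\begin{equation*}
\frac{d}{dt}E(I_Nu^\ld)(t)=\Im\int_{\R^4}\overline{\D I_Nu^\ld}\cdot\bigl[I_N\bigl((V*|u^\ld|^2)u^\ld\bigr)-(V*|I_Nu^\ld|^2)I_Nu^\ld\bigr]\,dx+\mathcal{R}(t),
\end{equation*}
where $\mathcal{R}(t)$ is a lower-order correction handled by the same scheme. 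The bracketed trilinear operator has symbol $m_N(\xi_1+\xi_2+\xi_3)-m_N(\xi_1)m_N(\xi_2)m_N(\xi_3)$ paired with $\wh V(\xi_2+\xi_3)$, fits the hypotheses of Lemma \ref{lem:mCoifmanMeyer}, and vanishes on $\{|\xi_j|\le N,\ j=1,2,3\}$. Hence, after a Littlewood--Paley decomposition of each of the three inputs, only interactions in which at least one factor has frequency $> N/8$ contribute.

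Second, I would enumerate the surviving configurations. Writing $u^\ld = u^\ld_{\le \me}+u^\ld_{>\me}$ in each slot and dyadically decomposing $u^\ld_{>\me}=\sum_{M\ge N/8}u^\ld_M$, one is reduced to bounding quadrilinear forms
\begin{equation*}
\int_J\!\!\int_{\R^4}\overline{\D I_Nu^\ld_{N_1}}\;T_\sigma\bigl(u^\ld_{N_2},u^\ld_{N_3},u^\ld_{N_4}\bigr)(x)\,dxdt,\qquad \max_{j\in\{1,2,3,4\}}N_j\gtrsim N,
\end{equation*}
with $T_\sigma$ a Coifman--Meyer-type operator involving the Hartree potential. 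Each such term would be estimated via H\"older's inequality, distributing the factors among (i) $L^\infty_tL^2_x$ controlled by the hypothesis $E(I_Nu^\ld(t))\le 1$, (ii) the endpoint $L^2_tL^4_x$ controlled by the long-time Strichartz bound \eqref{eq:longtimestrichartz} whenever the argument carries a projection onto $\{|\xi|>N/100\}$, and (iii) the Morawetz space $|\n|^{-1/4}L^4_{t,x}$ furnished by Lemma \ref{lem:IME}. The modified Coifman--Meyer multiplier theorem in Lemma \ref{lem:mCoifmanMeyer} is essential here in order to commute the Hartree convolution through the H\"older distribution without losing at the endpoint of Hardy--Littlewood--Sobolev.

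Finally, I would sum the dyadic series. The commutator symbol contributes an off-diagonal decay whenever the frequency imbalance among $N_1,\ldots,N_4$ is large, while the low-frequency Hartree potential $(V*|u^\ld_{\le\me}|^2)$ supplies, as already extracted in the proof of Proposition \ref{LTST Estim}, an additional gain of $M^{-(4-\gamma)}$. Balancing these decays against the $N^{\e}$ losses from the off-endpoint Strichartz pairs used in the $U^2_\D$-framework yields the net bound $N^{-(4-\gamma)/2}$. The hypothesis $\gamma>3$ enters here twice: it provides the $\dot H^{1/2}$-regularity required by Lemma \ref{lem:IME}, and it makes the dyadic sum over $M\ge N/8$ absolutely convergent.

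The main obstacle is the combinatorial book-keeping needed to track all interaction patterns while maintaining the sharp exponent $-(4-\gamma)/2$. Unlike in local-nonlinearity settings, the Hartree convolution forces one to treat the frequencies of $V*|u|^2$ separately, and the weak-type $L^{q_1,\infty}$ form of Lemma \ref{lem:mCoifmanMeyer} is needed to sidestep endpoint losses in Hardy--Littlewood--Sobolev. Moreover, the lowest-frequency factor $u^\ld_{\le\me}$ has to be carefully split between a Bernstein-type $L^\infty_x$ estimate on the localized Hartree potential and a Strichartz-type $L^4_tL^{8/3}_x$ estimate on $\n I_Nu^\ld_{\le\me}$, so that the margin of dyadic summability is preserved uniformly across all frequency interaction types.
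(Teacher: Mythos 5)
Your overall strategy is aligned with the paper's: expand $\tfrac{d}{dt}E(I_Nu^\ld)$ as a commutator whose symbol vanishes when all input frequencies stay below $N$, then estimate the surviving frequency interactions using the bounds collected in Sections~3--4. However, several of the mechanisms you describe do not match the proof, and a few are actively misleading.

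First, the Morawetz estimate from Lemma~\ref{lem:IME} is \emph{not} applied directly in the energy-increment estimate. It enters only indirectly: it partitions $J$ into subintervals on which Proposition~\ref{pro:LWP} gives uniform $U^2_\D$ control, which feeds the base case \eqref{eq:base case} of the iteration that produces the long-time Strichartz bound \eqref{eq:longtimestrichartz}. In the proof of Proposition~\ref{pro:energy increment} the only two spaces that matter are $L^\infty_tL^2_x$ for $\nabla I u$ (from $E(Iu)\le 1$) and $L^2_tL^4_x$ for $\nabla I u_{>N/100}$ (from \eqref{eq:longtimestrichartz}), together with near-endpoint Strichartz pairs obtained by interpolation. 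The gain $N^{-(4-\gamma)}$ comes from trading $\nabla$ for $|\nabla|^{\gamma/2-1}$ or $|\nabla|^{\gamma-3}$ on the high-frequency factors via Bernstein, not from Morawetz. A H\"older distribution that reserves one slot for $|\nabla|^{-1/4}L^4_{t,x}$, as you propose, is a different scheme; it is plausible but you would need to verify the exponent bookkeeping from scratch.

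Second, the dyadic iteration over $M\ge N/8$ with gain $M^{-(4-\gamma)}$ belongs to the \emph{proof} of Proposition~\ref{LTST Estim}, not to the energy-increment estimate. By the time Proposition~\ref{pro:energy increment} is proved, that iteration has already been summed and closed in the Corollary yielding \eqref{eq:longtimestrichartz} with a fixed cutoff $N/100$. The energy increment simply plugs in \eqref{eq:longtimestrichartz}; the remaining dyadic sums (over $N_3\ge N_4$ inside the contribution of \eqref{C}) are over frequencies $\le N/8$ and are absorbed by the Coifman--Meyer decay $|\xi_3|/|\xi_1|$, not by the $M^{-(4-\gamma)}$ structure you invoke.

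Third, you fold the sextilinear term $-\Im\int\overline{I((V*|u|^2)u)}\bigl[\cdots\bigr]dx$ into an $\mathcal R(t)$ ``handled by the same scheme.'' This understates it: after using \eqref{eq:all low vanishes} the sextilinear piece splits into a genuine sub-case analysis \eqref{1}--\eqref{4} times \eqref{A}--\eqref{D}, and one has to exploit a secondary frequency constraint (at least one low factor must actually have frequency $\gtrsim N/12$ on the resonance hypersurface $\mathfrak S$) to reduce \eqref{7} to \eqref{8}. None of that is a mechanical copy of the quartic estimates. Finally, the exponent $N^{-(4-\gamma)/2}$ does not arise from ``balancing'' gains against $N^\varepsilon$ losses; the estimates actually produce $N^{-(4-\gamma)+O(\varepsilon)}$, and $-(4-\gamma)/2$ is simply a clean, weaker statement of the bound obtained once $\varepsilon$ is taken small.
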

\begin{remark}
	From this proposition, we may deduce that $J$ given by
	\eqref{eq:J} is relatively open provided  $N$ is large enough. Hence $J=[0, \infty)$, and this concludes the proof of Theorem \ref{theorem}.
\end{remark}
\begin{proof}[Proof of Proposition \ref{pro:energy increment}]
	By definition of $E(Iu(t))$ and direct calculation, we get
	\begin{align}
	&\frac{d}{dt}E(Iu(t))\nonumber\\
	=&\Im \int_{\R^4}\overline{\D Iu}\left[\Iconv Iu-I\bigl(\conv u\bigr)\right](t,x)dx\label{eq:quart}\\
	-&\Im\int_{\R^4}\overline{I(\conv u\bigr)}\left[\Iconv Iu-I\bigl(\conv u\bigr)\right](t,x)dx.\label{eq:sextilinear}
	\end{align}
	Observing that  the definition of $I$ implies
	\begin{equation}\label{eq:all low vanishes}
	Iu_{\leq \nei}\Iconvln -I\left(u_{\leq\nei}\convln\right)=0,
	\end{equation}
	we are reduced to dealing with the contributions from the following five terms to \eqref{eq:quart} and \eqref{eq:sextilinear}
	\begin{align}
	&Iu_{\leq \nei}\Iconvgn-I\bigl(u_{\leq \nei}\convgn\bigr),\label{A}\\
	&Iu_{>\nei}\Imix-I\bigl(u_{>\nei}\mix\bigr),\label{E}\\
	&Iu_{\leq \nei}\Imix- I\bigl(u_{\leq \nei}\mix\bigr),\label{B}\\
	&Iu_{>\nei}\Iconvln-I\bigl(u_{>\nei}\convln\bigr),\label{C}\\
	&Iu_{>\nei}\Iconvgn-I\bigl(u_{>\nei}\convgn\bigr).\label{D}
	\end{align}
	Note that \eqref{A} and \eqref{E} can be handled by the same argument while \eqref{B} and \eqref{C} can be treated in the similar way.  All these terms will be estimated by using\eqref{eq:longtimestrichartz}.\\
	
	\noindent {\bf Estimation of  \eqref{eq:quart} }.  Using Fourier transforms,
	we may write \eqref{eq:quart} as a multilinear integration on hypersurfaces
	\begin{align*}
	\int_{\Sigma}\wh{\Delta I u}(t,\xi_1){\bf q}(\xi_2,\xi_3,\xi_4)|\xi_3+\xi_4|^{-4+\gamma}
	\wh{Iu}(t,\xi_2)\wh{Iu}(t,\xi_3)\wh{Iu}(t,\xi_4)d\sigma(\xi),
	\end{align*}
	where $\Sigma=\{(\xi_1,\xi_2,\xi_3,\xi_4)\in \R^4\times\cdots\times\R^4:\xi_1+\xi_2+\xi_3+\xi_4=0\}$
	with
	\[{\bf q}(\xi_2,\xi_3,\xi_4)=\Bigl(1-\frac{m(\xi_2+\xi_3+\xi_4)}{m(\xi_2)m(\xi_3)m(\xi_4)}\Bigr).\]
	We will neglect the
	conjugation operation since this is irrelevant in following estimations.
	
	The contribution of \eqref{A} to \eqref{eq:quart}. We will not exploit the cancellation property.
	Instead, we use Minkowski's inequality to reduce the problem to estimating each factors in the difference \eqref{A}.
	By interpolation, \eqref{eq:base case}, and the definition of $I$ along
	with $(2,\efga)\in \Ld_{\ga}$, we get for any $\e>0$
	\begin{multline*}
	\int_J\left|\langle \n Iu, \n Iu_{\leq\nei}\cdot\Iconvgn\rangle\right|\;dt
	\lesssim\|\n Iu\|_{L^\infty_tL^2_x}\|\n I u_{\leq\nei}\|_{L^{1+\frac{2}{\e}}_tL^{2+\e}_x}\|Iu_{>\nei}\|_{L^{2+\e}_tL^{\efga-}_x}^{2+\e}
	\end{multline*}
	where \[ \frac{8}{4-\gamma}-=\frac{8}{4-\gamma+\frac{2\e}{2+\e}}.\]
	By interpolation, Bernstein inequality and \eqref{eq:longtimestrichartz}, we have
	\begin{align*}
	\int_J\left|\langle \n Iu, \n Iu_{\leq\nei}\cdot\Iconvgn\rangle\right|\;dt
	\lesssim&N^{2\e}\||\n|^{\ga} Iu_{>\nei}\|_{L^2_tL^4_x}^2\\
	\lesssim&N^{-4+\gamma}N^{\e}.
	\end{align*}
	Similarly, we have
	\begin{align*}
	&\int_J\left|\langle\n Iu, Iu_{\leq \nei}|\cdot|^{-\gamma}*(\n Iu_{>\nei}\cdot Iu_{>\nei})\rangle\right|\;dt\\
	\lesssim&\|\n Iu\|_{L^\infty_tL^2_x}\|Iu_{\leq \nei}\|_{L^\infty_tL^4_x}\|\n Iu_{>\nei}\|_{L^2_tL^4_x}\|Iu_{>\nei}\|_{L^2_tL^\fga_x}\\
	\lesssim&\||\n|^{\gamma-3}Iu_{>\nei}\|_{L^2_tL^4_x}\\
	\lesssim& N^{-4+\gamma}.
	\end{align*}
	Taking $\e$ small enough,
	we see the contribution from \eqref{A} to \eqref{eq:quart} is at most $N^{-\frac{4-\gamma}{2}}$.
	
	The contribution of \eqref{E} to \eqref{eq:quart}. We use arguments similar to \eqref{A} to get
	\begin{multline*}
	\int_J\left|\langle\n Iu,\n I u_{>\nei}\Imix\rangle\right|\;dt\\
	\lesssim\|\n I u\|_{L^\infty_tL^2_x}\|\n Iu_{>\nei}\|_{L^2_tL^4_x}\|Iu_{\leq \nei}\|_{L^\infty_tL^4_x}\|Iu_{>\nei}\|_{L^2_tL^{\fga}_x}\\
	\lesssim\||\n|^{\gamma-3}I u_{>\nei}\|_{L^2_tL^4_x}
	\lesssim N^{-4+\gamma},
	\end{multline*}
	and
	\begin{multline*}
	\int_J\left|\langle\n Iu, Iu_{>\nei}|\cdot|^{-\gamma}*(\n I u_{\leq \nei}\cdot Iu_{>\nei})\rangle\right|\;dt\\
	\lesssim\|\n Iu\|_{L^\infty_tL^2_x}\|\n I u_{\leq \nei}\|_{L^\infty L^2}\|Iu_{>\nei}\|^2_{L^2_tL^\efga_x}
	\lesssim N^{-4+\gamma}.   \end{multline*}
	Thus the contribution of \eqref{E} to \eqref{eq:quart} is also at most $N^{-4+\gamma}N^{0+}$.
\medskip
	
	The contribution of \eqref{B} and \eqref{C} to \eqref{eq:quart} can be estimated in the same way and we
	only deal with \eqref{C}, which is more difficult.
	In view of the relation on the frequencies on $\Sigma$,
	the Fourier transform of \eqref{C} is supported outside the ball
	$B(0,N/4)$.	
	This allows us to put $P_{>\frac N4}$ to $\Delta I u(t,x)$ in the following estimate.
	To employ the modified Coifmain-Meyer estimate in Lemma \ref{lem:mCoifmanMeyer},
	we write by using Fourier transform and the inverse Fourier transform
	\begin{align*}
	&\int_J\int_\Sigma\,\wh{\Delta I u}(t,\xi_1){\bf q}(\xi_2,\xi_3,\xi_4)|\xi_3+\xi_4|^{-4+\gamma}\wh{Iu_{> \nei}}(t,\xi_2)\wh{Iu_{\leq\nei}}(t,\xi_3)\wh{Iu_{\leq \nei}}(t,\xi_4)d\sigma(\xi)dt\\
	=&\sum_{\nei\geq N_3\geq N_4}
	\int_J\int_\Sigma T\left(\Delta Iu_{>\frac N4},I u_{N_3},Iu_{N_4}\right)(t,x)Iu_{>\nei}(t,x)dxdt,
	\end{align*}
	where
	\[T(f,g,h)(x)=\iint e^{ix\cdot(\xi_1+\xi_3+\xi_4)}{\bf\tilde q}(\xi_1,\xi_3,\xi_4)|\xi_3+\xi_4|^{-4+\gamma}
	\wh f(\xi_1)\wh g(\xi_3)\wh h(\xi_4)\,d\xi_1 d\xi_3 d\xi_4,\]
	\[{\bf\tilde q}(\xi_1,\xi_3,\xi_4)=1-\frac{m(\xi_1)}{m(\xi_1+\xi_3+\xi_4)}.\]
	As in \cite{CKSTT04:CPAM}, we use the fundamental theorem of calculus to see that on the dyadic supports
	\[{\bf\tilde q}(\xi_1,\xi_3,\xi_4)\lesssim \frac{|\xi_3|}{|\xi_1|}.\]
	By  H\"older's inequality and Lemma \ref{lem:mCoifmanMeyer}
	along with the same argument in \cite{CKSTT04:CPAM}, we can deduce a bound on this term with
	\begin{align*}
	&\sum_{N\geq N_3\geq N_4}N_3^{\e}
	\|\n Iu_{>\frac N4}\|_{L^2_tL^4_x}\|Iu_{>\nei}\|_{L^{2+}_tL^{\fga-}_x}
	\||\n|^{1-\e} Iu_{N_3}\|_{L^{\infty-}_tL^{2+}_x}
	\|Iu_{N_4}\|_{L^{\infty}_tL^{4+}_x}\\
	\lesssim &N^{-4+\gamma}N^{5\e}\sum_{N\geq N_3\geq N_4}N_3^{\e}N_4^{\e}
	\|\n Iu_{>\frac N4}\|_{L_t^2L^4_x}^2
	\|\n Iu_{N_3}\|_{L^\infty_tL^2_x}
	\|\n Iu_{N_4}\|_{L^\infty_tL^2_x}\\
	\lesssim& N^{-4+\gamma}N^{10\e}\|\n Iu_{>\frac N4}\|^2_{L^2_tL^4_x}\,
	\|\n Iu\|_{L^\infty_tL^2_x}^2.
	\end{align*}
	Integrating over $J$ in time, we are done.

	The contribution from \eqref{D} is easier. By H\"older, Hardy-Littlewood-Sobolev's inequalities  and Sobolev embedding,
	we obtain the following estimates
	\begin{align*}
	&\int_J\langle\n I u,\n Iu_{>\nei}\Iconvgn\rangle\;dt+\int_J\langle\n I u,Iu_{>\nei}|\cdot|^{-\gamma}*(\n I u_{>\nei}\cdot Iu_{>\nei})\rangle\;dt\\
	\lesssim &\|\n Iu\|_{L^\infty_tL^2_x}\|\n Iu_{>\nei}\|_{L^2_tL^4_x}\|Iu_{>\nei}\|_{L^\infty_tL^4_x}\|Iu_{>\nei}\|_{L^2_tL^\fga_x}\\
	\lesssim &N^{-4+\gamma},
	\end{align*}
	and
	\begin{multline*}
	\int_J\langle\n I u,\n Iu_{>\nei}\convgn\rangle\;dt\\
	\lesssim \|\n Iu\|_{L^\infty_tL^2_x}\|\n Iu_{>\nei}\|_{L^{\infty-}_tL^{2+}_x}\|u_{>\nei}\|_{L^2_tL^\efga_x}\|u_{>\nei}\|_{L^{2+}_tL^{\efga-}_x}
	\lesssim N^{-4+\gamma}N^{10\e},
	\end{multline*}
	and
	\begin{multline*}
	\int_J\langle\n I u,u_{>\nei}|\cdot|^{-\gamma}*(\n I u_{>\nei}\cdot u_{>\nei})\rangle\;dt\\
	\lesssim \|\n Iu\|_{L^\infty_tL^2_x}
	\|\n Iu_{>\nei}\|_{L^2_tL^\efga_x}\|\n Iu_{>\nei}\|_{L^\infty_tL^2_x}\|Iu_{>\nei}\|_{L^2_tL^\efga_x}
	\lesssim N^{-4+\gamma}.
	\end{multline*}
	
	\noindent  {\bf Estimation of the sextilinear term \eqref{eq:sextilinear}}.
	Let us estimate the contribution of \eqref{eq:sextilinear}
	to the energy increment and the proof of our main theorem will be completed.
	Recalling  \eqref{eq:all low vanishes}, we know there is at least one
	function $u$ having Fourier support outside the ball $B(0,N/8)$ in the difference term
	of \eqref{eq:sextilinear}.
	
	As observed in the estimation of \eqref{eq:quart},
	it suffices to consider the contributions from \eqref{A} \eqref{B} and \eqref{D},
	where \eqref{E} and \eqref{C} can be estimated in a similar way.
	Moreover, it is easy to see that \eqref{D} is easier to handle since all the functions
	involved are frequency localized at $|\xi|>N/8$ and one can reduce the argument for this term to
	that of \eqref{A} and \eqref{B}.
	
	It remains to estimate the contributions from \eqref{A} and \eqref{B}.
	Again, we note that the estimation for the first terms in \eqref{A} and \eqref{B}
	are the same as for the second ones. Writing
	\begin{align}
	|I\bigl((|\cdot|^{-\gamma}*|u|^2)u\bigr)|
	\leq&|I\bigl((|\cdot|^{-\gamma}*|u_{\leq N/8}|^2)u_{\leq N/8}\bigr)|\label{1}\\
	&+|I\bigl((|\cdot|^{-\gamma}*|u_{\leq N/8}|^2)u_{> N/8}\bigr)|\label{2}\\
	&+|I\bigl((|\cdot|^{-\gamma}*|u_{> N/8}|^2)u_{\leq N/8}\bigr)|\label{3}\\
	&+|I\bigl((|\cdot|^{-\gamma}*|u_{> N/8}|^2)u_{> N/8}\bigr)|\label{4},
	\end{align}
	and noting that \eqref{3} and \eqref{4} are easier to handle than \eqref{1} and \eqref{2},
	we are reduced to estimating the following four terms
	\begin{equation}\label{5}
	\int_J\Bigl\langle I\bigl(\convln u_{\leq\nei}\bigr),
	I\bigl(u_{\leq \nei}\convgn\bigr)\Bigr\rangle\;dt,
	\end{equation}
	\begin{equation}\label{6}
	\int_J\Bigl\langle I\bigl(\convln u_{>\nei}\bigr),
	I\bigl(u_{\leq \nei}\convgn\bigr)\Bigr\rangle\;dt,
	\end{equation}
	\begin{equation}\label{7}
	\int_J\Bigl\langle I\bigl(\convln u_{\leq\nei}\bigr),
	I\bigl(u_{\leq \nei}\mix\bigr)\Bigr\rangle\;dt,
	\end{equation}
	\begin{equation}\label{8}
	\int_J\Bigl\langle I\bigl(\convln u_{>\nei}\bigr),
	I\bigl(u_{\leq \nei}\mix\bigr)\Bigr\rangle\;dt.
	\end{equation}

	By writing \eqref{7} into the multilinear integration over
	hypersurface $\mathfrak S$ via Fourier transform and the Parseval identity
	\[\mathfrak S=\left\{(\xi_1,\,\xi_2,\,\xi_3,\,\xi_4,\,\xi_5,\,\xi_6)\in
	\underbrace{\R^4\times\cdots\times \R^4}_{6}:\xi_1+\cdots+\xi_6=0\right\},\]
	we know there is at least one term, among three $u_{\leq \nei}$'s in $I\bigl(\convln u_{\leq\nei}\bigr)$, with frequency
	localized at $|\xi|>\left(\frac N2-\frac N8-\frac N8\right)/3=\frac {N}{12}$. Hence \eqref{7} may be reduced to \eqref{8}.

	The estimate of \eqref{5}. We have by H\"older, Hardy-Littlewood-Sobolev's inequalty and Sobolev embedding
	\begin{align*}
	&\int_J\Bigl\langle I\bigl(\convln u_{\leq \nei}\bigr), I\bigl(u_{\leq \nei}\convgn\bigr)\Bigr\rangle \;dt\\
	\lesssim&\int_J\bigl\|\convln u_{\leq \nei}\bigr\|_{L^{\frac{4}{\gamma-1}}_x}
	\bigl\|u_{\leq \nei}\convgn\bigr\|_{L^{\frac{4}{5-\gamma}}_x}\;dt\\
	\lesssim&\|u_{>\nei}\|^2_{L^2_tL^{\fga}_x}\|Iu_{\leq \nei}\|^4_{L^\infty_tL^4_x}\\
	\lesssim& N^{-4+\gamma}.
	\end{align*}
	The same argument applies to \eqref{6} equally well.

	The estimate of \eqref{8}. We have by H\"older,  Hardy-Littlewood-Sobolev's inequalty and Sobolev embedding
	\begin{align*}
	&\int_J\langle I\bigl(\convln u_{> \nei}\bigr), I\bigl(u_{\leq \nei}\mix\bigr)\rangle\;dt\\
	\lesssim&\int_J\bigl\|\convln u_{> \nei}\bigr\|_{L^{2}_x}\bigl\|Iu_{\leq \nei}\mix \bigr\|_{L^{2}_x}\;dt\\
	\lesssim&\|u_{>\nei}\|^2_{L^2_tL^{\fga}_x[J]}\|Iu_{\leq \nei}\|^4_{L^\infty_tL^4_x[J]}\\
	\lesssim& N^{-4+\gamma}.
	\end{align*}
	The proof is complete. \end{proof}

%
%
%
%

\end{document}